\def\x{2.4}
\newtheorem{Theorem}{Theorem}[section]
\newtheorem{Proposition}[Theorem]{Proposition}
\newtheorem{Lemma}[Theorem]{Lemma}
\newtheorem{Corollary}[Theorem]{Corollary}
\newtheorem{Definition}[Theorem]{Definition}
\newtheorem{Example}[Theorem]{Example}
\newtheorem{Remark}[Theorem]{Remark}
\newcounter{IssueCounter}
\newtheorem{Issue}[IssueCounter]{Issue}
\newcounter{QuestionCounter}
\newtheorem{Question}[QuestionCounter]{Question}
\definecolor{color1}{rgb}{0.1,0.3,0.0}
\newcounter{ReminderCounter}
\newtheorem{Reminder}[ReminderCounter]{Reminder}
\newcommand{\mb}[1]{\mathbb{#1}}
\newcommand{\mf}[1]{\mathfrak{#1}}
\newcommand{\Aut}[2][]{\mathrm{Aut}_{#1}\!\left(#2 \right)}
\newcommand{\Ad}{\mathrm{Ad}}
\newcommand{\diag}{\mathrm{diag}}
\definecolor{color2}{rgb}{0.30,0.0,0.9}
\newcommand{\A}{\mathrm{Aut}}
\newcommand{\g}{{\mathfrak{g}}}
\newcommand{\ox}{\mathcal{O}_{\mathbb{X}}}
\newcommand{\CC}{{\mathbb{C}}}
\newcommand{\LL}{{\Lambda}}
\newcommand{\ZZ}{{\mathbb{Z}}}
\newcommand{\twp}{\tilde{\wp}}
\newcommand{\NN}{\mathbb{N}}
\newcommand{\ot}{\mathcal{O}_{\mathbb{T}}}
\newcommand{\TT}{{\mathbb{T}}}
\title{A classification of automorphic Lie algebras on complex tori}
\author{Vincent Knibbeler, Sara Lombardo, and Casper Oelen}
\begin{document}
\date{}
\maketitle

\begin{abstract}
We classify the automorphic Lie algebras of equivariant maps from a complex torus to $\mathfrak{sl}_2(\mathbb{C})$. For each case we compute a basis in a normal form. The automorphic Lie algebras correspond precisely to two disjoint families of Lie algebras parametrised by the modular curve of $\mathrm{PSL}_2(\ZZ)$, apart from four cases, which are all isomorphic to Onsager's algebra.
\end{abstract}

 \section{Introduction}
Automorphic Lie algebras are a class of infinite-dimensional Lie algebras over the complex field $\mathbb{C}$ that emerged in the context of mathematical physics, and more precisely in the context of integrable systems \cite{lombardo2004thesis,lombardo2004reductions,lombardo2005reduction}. They originated in the study of algebraic reduction of Lax pairs by Lombardo and Mikhailov \cite{lombardo2004thesis,lombardo2004reductions}, related to the notion of \emph{reduction groups}, proposed by Mikhailov in \cite{mikhailov1979integrability} and  \cite{mikhailov1980reduction}. The first appearance of a notion of automorphic Lie algebras is to be found in the PhD thesis by Lombardo, and subsequently in Lombardo and Mikhailov \cite{lombardo2005reduction}, where a systematic study of these algebras within the theory of integrability began. It then developed independently from this approach into the search and the description of invariant algebras, with contributions by Lombardo and Sanders, and later also Knibbeler \cite{lombardo2010classification, knibbeler2015invariants, knibbeler2017higher, knibbeler2020hereditary}. The subject of automorphic Lie algebras inspired recent research into group theory, by the authors \cite{knibbeler2023polyhedral}. Independently, equivariant map algebras were introduced and studied by representation theorists Neher, Savage and Senesi \cite{neher2012irreducible} in order to unify many Lie algebras that appear in mathematical physics. These included automorphic Lie algebras, although they were unaware of this at the time.\\

While a precise definition will be given in Section \ref{Setup}, for now, the reader should think of automorphic Lie algebras as Lie algebras of meromorphic maps (usually with prescribed poles) from a compact Riemann surface $X$, originally the Riemann sphere, into a finite dimensional Lie algebra $\mathfrak{g}$, which are equivariant with respect to a finite group $\Gamma$ acting on $X$ and on $\mathfrak{g}$, both by automorphisms. It is therefore not surprising that the study of these objects requires notions from algebra, geometry and analysis. The group $\Gamma$ plays the role of the reduction group in the context of integrable systems. In this paper, we refer to $\Gamma$ as the \emph{symmetry group} of an automorphic Lie algebra.\\

Considerable work on automorphic Lie algebras based on the Riemann sphere has been carried out in the past decades. More recently other Riemann surfaces have been investigated in \cite{duffield2024wild} where the foundations of a representation theory for automorphic Lie algebras have been developed. Automorphic Lie algebras in the context of modular forms have been investigated in \cite{knibbeler2023automorphic}. 
In \cite{duffield2024wild}, the authors find a local description of automorphic Lie algebras in the vicinity of a point on the compact Riemann surface. That is, the quotient of an automorphic Lie algebra obtained by truncating a local parameter is determined. However, it remains an open problem to have a global description of such Lie algebras on surfaces of positive genus, as we have for the Riemann sphere. First steps in this direction can be found in \cite{oelen2022automorphic}.\\

This paper develops those concepts further and presents a first classification in the case where $X$ is a genus 1 Riemann surface and when $\g=\mathfrak{sl}_2(\CC)$; we will refer to $\g$ as the \emph{base} Lie algebra. We will exclusively work over the complex numbers, for which reason we use the notation $\mf{sl}_2:=\mf{sl}_2(\CC)$.
 \\

To give a flavour of automorphic Lie algebras to a newcomer, let us start with a classic example. Consider a simple complex, finite dimensional Lie algebra $\mathfrak{g}$ and let $\sigma$ be an automorphism of order $N$ of $\mathfrak{g}$. Consider the space of Laurent polynomial maps $f:\mathbb{C}^* \rightarrow \mathfrak{g}$ which satisfy $f(\epsilon z)=\sigma f(z),$ where $\epsilon^N=1.$ Endow this space with the pointwise Lie bracket. This is an example of an automorphic Lie algebra with base Lie algebra $\mathfrak{g}$, and where the Riemann surface $X$ is the Riemann sphere $\mathbb{C}_{\infty}$. The space of meromorphic functions on $\mathbb{C}_{\infty}$ which are holomorphic outside $\{0,\infty\}$ is given by the space of Laurent polynomials $\mathbb{C}[z,z^{-1}]$. The group here is $C_N$ which acts on $\mathbb{C}_{\infty}$ as $z\mapsto \epsilon z$ and on $\mathfrak{g}$ by the automorphism $\sigma$, such that the equivariance condition reads $f(\epsilon z)=\sigma f(z)$. It is called a \emph{twisted loop algebra} and is probably best known for the role it plays in the construction of affine Kac-Moody algebras \cite{kac1990infinite}. Automorphic Lie algebras are natural generalisations of this type of algebra, in the sense illustrated below. \\

As was recognised only relatively recently \cite{roan1991onsager}, another important example of an automorphic Lie algebra can be traced back to the chemistry Nobel Prize winner, Lars Onsager, in his pioneering paper \cite{onsager1944crystal} on the exact solution of the planar Ising model.
This is not a loop algebra and is now known as the \emph{Onsager algebra}. It can be defined as the Lie algebra $\mathfrak{O}$ with complex basis $A_k$ and $G_m$ where $k\in \ZZ$ and $m\in \NN.$ The brackets are given by 
\begin{align*}
[A_k,A_l]&=4G_{k-l},\\
[A_k,G_m]&=2(A_{k-m}-A_{k+m}),\\
[G_m,G_n]&=0,
\end{align*}
with $G_{-m}= -G_m$ $(m>0)$ and $G_0=0$.
Onsager's work on the 2D Ising model was later simplified using different techniques, cf. \cite{kaufman1949crystal}, in which there was no appearance of the Onsager algebra.
This caused the Onsager algebra to be less studied until the early 1980s, when it began to appear in different contexts and in a different form. In a paper by Grady, \cite{dolan1982conserved} relations (the Dolan-Grady relations) between two nonlinear operators were found which guarantee the existence of infinitely many commuting charges. The Dolan-Grady relations were later found by Perk in \cite{perk1989star} to be connected to the Onsager algebra. For a concise overview, we refer to El-Cha\^{a}r \cite{el2012onsager}.
 
 In \cite[Theorem 2.5]{knibbeler2023automorphic} it is proven that the Onsager algebra $\mathfrak{O}$ is isomorphic to the Lie algebra $\mathfrak{A}=\CC\langle h,e,f\rangle \otimes_{\CC}\CC[x]$ with the Lie structure that is linear over polynomials in the indeterminate $x$ and  $$ [h,e]=2e,\quad [h,f]=-2f,\quad [e,f]=h\otimes x(x-1).$$
 This is the form we will encounter when proving that certain automorphic Lie algebras are isomorphic to the Onsager algebra.\\
 
The main result of this work is a classification of automorphic Lie algebras on complex tori and the construction of certain normal forms. We will present our classification theorem here and leave the construction of the normal forms, proving our classification theorem, for the main body of the paper.

Besides the Onsager algebra described above, we find two families of Lie algebras parametrised by the (open) modular curve. We denote these families by $\mathfrak{C}_{\tau}$ and $\mathfrak{S}_{\tau}$ and present them in terms of the traditional elliptic invariants $g_2$ and $g_3$ defined as
$$g_2(\tau)=60\sum_{\substack{a,b\in\mb Z\\ (a,b)\ne (0,0)}}(a+b\tau)^{-4},\quad g_3(\tau)=140\sum_{\substack{a,b\in\mb Z\\ (a,b)\ne (0,0)}}(a+b\tau)^{-6}$$ where $\tau$ is an element of the upper half plane $$\mathbb{H}=\{z\in \CC: \mathrm{Im}(z)>0\}.$$
The Lie algebra $\mathfrak{C}_{\tau}$ is the current algebra
$$
 \mathfrak{C}_{\tau}=\mf{sl}_2 \otimes_{\CC} \CC[x,y]/(y^2-4x^3+g_2(\tau)x+g_3(\tau)),
 $$
where, as usual, the Lie bracket is defined by extending the bracket of $\mf{sl}_2$ linearly over the polynomials in $x$ and $y$.
The second family is defined by
\begin{align}\label{def:Stau} \mathfrak{S}_{\tau}=\CC\langle E,F,H\rangle \otimes_{\CC} \CC[x],
\end{align} where we identify $X$ with $X\otimes 1$ for $X=E,F,H$,
with the Lie structure that is linear over polynomials in $x$ and satisfies 
$$ [H,E]=2E,\quad [H,F]=-2F,\quad [E,F]=H\otimes (4x^3-g_2(\tau)x-g_3(\tau)).$$ 

The elliptic invariants $g_2$ and $g_3$ are usually defined for any lattice $\Lambda$ in $\mb C$ and we could define families of Lie algebras $\mathfrak{C}_{\Lambda}$ and $\mathfrak{S}_{\Lambda}$ accordingly, but this does not produce any new Lie algebras: homothetic lattices produce isomorphic Lie algebras. Indeed, using the fact that $g_2(\alpha \Lambda)=\alpha^{-4}g_2(\Lambda)$ and $g_3(\alpha \Lambda)=\alpha^{-6}g_3(\Lambda)$ for any nonzero complex number $\alpha$, we can check that the linear extension of
$$A\otimes p(x,y) \mapsto A\otimes p(\alpha^{2}x,\alpha^{3}y)$$ descends to an isomorphism from $\mathfrak{C}_{\Lambda}$ to $\mathfrak{C}_{\alpha\Lambda}$. Likewise we can define an isomorphism from $\mathfrak{S}_{\Lambda}$ to $\mathfrak{S}_{\alpha\Lambda}$ with the assignments
$$
E \otimes p(x)\mapsto E\otimes \alpha^3 p(\alpha^{2}x), \quad F \otimes p(x)\mapsto F\otimes \alpha^3 p(\alpha^{2}x),\quad H \otimes p(x)\mapsto H\otimes p(\alpha^{2}x).
$$
Hence we can assume that the lattice $\Lambda$ is scaled to the canonical form $\mb Z\oplus \mb Z\tau$ for $\tau\in\mb H$ if we are only concerned with isomorphism classes of Lie algebras.

If $\tau$ is replaced by $\tau'=\frac{a\tau+b}{c\tau+d}$ for $a,b,c,d\in\mb Z$, $ad-bc=1$ we arrive at isomorphic Lie algebras again. Indeed, $\mb Z\oplus \mb Z\tau'=(c+d\tau)^{-1}\left(\mb Z\oplus \mb Z\tau\right)$, and we have just shown that homothetic lattices produce isomorphic Lie algebras. Thus, we only need to consider equivalence classes $[\tau]$ in the modular curve $\mathrm{SL}_2(\mb Z)\backslash\mb H$ if we are interested in $\mathfrak{C}_{\tau}$ and $\mathfrak{S}_{\tau}$ up to isomorphism, and we will occasionally write $\mathfrak{C}_{[\tau]}$ and $\mathfrak{S}_{[\tau]}$ accordingly.

We know, in fact, that $\mathfrak{C}_{\tau}\cong \mathfrak{C}_{\tau'}$ if and only if $[\tau]=[\tau']$ due to Fialowski and Schlichenmaier. In \cite[Proposition 4.7]{fialowski2005global} they show that if $\g$ is a semisimple finite-dimensional Lie algebra,  $\mathcal{A}$ and $\mathcal{B}$ two associative, commutative algebras (with units) and if the current algebras $\g\otimes_{\CC}\mathcal{A}$ and $\g\otimes_{\CC}\mathcal{B}$ are isomorphic as Lie algebras, then $\mathcal{A}$ and $\mathcal{B}$ are isomorphic as associative algebras. 
Write $\mathcal{A}_{\tau}=\CC[x,y]/(y^2-4x^3+g_2(\tau)x+g_3(\tau))$. 
From algebraic geometry, cf. \cite[Corollary 3.7]{hartshorne1977algebraic}, we learn that $\mathcal{A}_{\tau}\cong \mathcal{A}_{\tau'}$ implies $[\tau]=[\tau']$ and hence $\mathfrak{C}_{\tau}\cong  \mathfrak{C}_{\tau'}$ implies $[\tau]=[\tau']$. It is not known to the authors whether $\mathfrak{S}_{[\tau]}\cong \mathfrak{S}_{[\tau']}$ implies $[\tau]=[\tau']$.\\

Now that we know the Lie algebras $\mathfrak{C}_{\tau},\,\mathfrak{O}$ and $\mathfrak{S}_{\tau}$ we can present our classification result.
\begin{Theorem}\label{thm:classification}Let $\Gamma$ be a finite group acting faithfully on a complex torus $T$ with biholomorphic maps and also faithfully on $\mathfrak{sl}_2$ with Lie algebra isomorphisms.
Let $\mathbb{T}$ be the torus minus the orbit $\Gamma\cdot \{0\}$, and let $\mathbb{T}\rightarrow \mathbb{T}/\Gamma$ be the canonical projection onto the quotient Riemann surface. 
The isomorphism class of the automorphic Lie algebra consisting of $\Gamma$-equivariant holomorphic maps $\mathbb{T}\to\mf{sl}_2$, meromorphic at the punctures $\Gamma\cdot \{0\}$, is determined by the number of branch points of $\mathbb{T}\rightarrow \mathbb{T}/\Gamma$ as in Table \ref{TableIntro}. No other number of branch points occur.

The class $[\tau]$ in this table is the element of $\mathrm{SL}_2(\mb Z)\backslash\mb H$ corresponding to the torus $T/t(\Gamma)$, where $t(\Gamma)$ is the subgroup of $\Gamma$ generated by the elements that have no fixed point in $T$.
\end{Theorem}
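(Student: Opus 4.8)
The plan is to turn the two simultaneous faithful actions into a short finite list of model cases, reduce every case to a cyclic point group acting on a quotient torus, and then read off the bracket in a normal form.

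\textbf{Structure of $\Gamma$.} First I would exploit that every biholomorphism of $T=\CC/\Lambda$ has the form $z\mapsto\alpha z+\beta$ with $\alpha\Lambda=\Lambda$, so $\mathrm{Aut}(T)=T\rtimes G_0$ with $G_0\le\CC^{\times}$ cyclic of order $2$, $4$ or $6$. An element is fixed-point-free in $T$ exactly when $\alpha=1$ and $\beta\neq0$, so $t(\Gamma)=\Gamma\cap T$ is a finite abelian normal subgroup and the point group $P:=\Gamma/t(\Gamma)$ embeds in $G_0$, hence is cyclic of order dividing $6$. On the other side $\mathrm{Aut}(\mf{sl}_2)=\PSL_2(\CC)$, whose finite subgroups are $C_n,D_n,A_4,S_4,A_5$. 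Intersecting the two constraints, $\Gamma$ must carry an abelian normal subgroup with cyclic quotient of order dividing $6$; this discards $S_4$ and $A_5$ (their only abelian normal subgroups have non-cyclic quotient) and leaves the translation groups, the dihedral groups $D_n=t(\Gamma)\rtimes\langle z\mapsto-z\rangle$, and the tetrahedral group $A_4=t(\Gamma)\rtimes C_3$ with $t(\Gamma)$ the $2$-torsion on a hexagonal torus.

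\textbf{Branch numbers.} Since $\chi(T)=0$, the orbifold Euler characteristic of $T/\Gamma$ vanishes, i.e. $2-2g'-\sum_i(1-1/e_i)=0$ with $g'\ge0$ and $e_i\ge2$. This forces either $g'=1$ with no branch points, or $g'=0$ with branch profile $(2,2,2,2)$, $(2,4,4)$, $(2,3,6)$ or $(3,3,3)$. Hence the number of branch points lies in $\{0,3,4\}$, which already settles the final assertion that no other number occurs, and it pins $P$ down: $P=1$ (no branch points), $P=C_2$ (four), and $P\in\{C_3,C_4,C_6\}$ (three). I would also record that $0\in T$ is fixed by $P$, so the puncture always sits at the most ramified branch point.

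\textbf{Reduction along the isogeny (the hard step).} The crux is to pass to $T':=T/t(\Gamma)$, whose class is the $[\tau]$ of the statement, and to show that only $P$ acting on $T'$ is seen. Writing $M$ for the meromorphic functions on $T$ holomorphic on $\TT$, the algebra is $(\mf{sl}_2\otimes M)^{\Gamma}$. I would first take $t(\Gamma)$-invariants: diagonalising the abelian action on $\mf{sl}_2$ into characters $\chi$ splits this as $\bigoplus_\chi \mf{sl}_2^{\chi}\otimes M_{\chi}$, where $M_{\chi}$ is the space of meromorphic sections over $T'$ of the flat degree-zero line bundle attached to $\chi$, with poles only over the puncture. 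The main obstacle is controlling these twisted sections whenever $t(\Gamma)\neq1$, that is, for the nontrivial translation groups as well as $D_n$ and (hardest) $A_4$, where the three characters of the $2$-torsion are permuted by the residual $C_3$. There one must exhibit an explicit meromorphic frame, built from ratios of translated Weierstrass data or theta quotients, that trivialises every $L_{\chi}$ simultaneously, respects the residual $P$-action, and is multiplicative for the bracket. Completing this identifies $(\mf{sl}_2\otimes M)^{t(\Gamma)}$ with the untwisted equivariant algebra over $T'$, leaving only $P$ on $T'$.

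\textbf{Case computation and identification.} Finally I would compute the residual $P$-invariants on $T'$ directly. Taking a generator of $P$ acting by $H\mapsto H$, $E\mapsto\zeta E$, $F\mapsto\zeta^{-1}F$, and using that $\wp,\wp'$ transform with weights $-2,-3$ under the rotation, the three components select generators $\tilde H=H\otimes1$, $\tilde E=E\otimes p$, $\tilde F=F\otimes q$ for suitable monomials $p,q$ in $\wp,\wp'$; then $[\tilde H,\tilde E]=2\tilde E$ and $[\tilde H,\tilde F]=-2\tilde F$ hold automatically, and the only real content is the single product $[\tilde E,\tilde F]=\tilde H\otimes pq$. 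For $P=1$ this returns $\mf{sl}_2\otimes\mc A_{[\tau]}=\mathfrak C_{[\tau]}$; for $P=C_2$ one finds $pq=\wp'^2=4x^3-g_2x-g_3$, i.e. $\mathfrak S_{[\tau]}$; and for $P=C_3,C_4,C_6$ the product $pq$ collapses to a quadratic in the invariant coordinate with two distinct roots, so the algebra is $\CC\langle H,E,F\rangle\otimes\CC[x]$ with a quadratic central bracket, which is Onsager's algebra via the isomorphism $\mathfrak A\cong\mathfrak O$ of \cite{knibbeler2023automorphic} recalled above. Collecting cases, exactly four actions ($C_3,C_4,C_6$ directly and $A_4$ after the reduction) give Onsager, the point group $C_2$ family gives $\mathfrak S_{[\tau]}$, and the translation case gives $\mathfrak C_{[\tau]}$, with $[\tau]$ the class of $T/t(\Gamma)$ throughout. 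I expect the reduction step to be the principal difficulty; the final computations are then mechanical.
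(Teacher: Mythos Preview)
Your overall plan mirrors the paper's: classify the admissible $\Gamma$ as extensions of a cyclic point group $P$ by a translation subgroup, reduce along the isogeny $T\to T/t(\Gamma)$, and then read off the bracket from the residual $P$-action via Weierstrass functions. The paper carries out the reduction step by constructing explicit $\A(\mf{sl}_2)$-valued intertwiners (its $\Phi_j$ for cyclic translation groups and $\Psi=\Ad(\Omega)$ for $C_2\times C_2$), which is exactly the trivialisation you describe in line-bundle language; your sketch is morally correct but, as you note yourself, this is where all the work is, and the paper's explicit matrices are needed in particular to verify that the trivialisation can be chosen $P$-equivariantly (so that $H$ lands in the Cartan and stays $P$-fixed).

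There is, however, a genuine error in your branch-point bookkeeping. You apply Riemann--Hurwitz to the closed map $T\to T/\Gamma$ and obtain the profiles $(2,2,2,2)$, $(3,3,3)$, $(2,4,4)$, $(2,3,6)$, concluding that the number of branch points lies in $\{0,3,4\}$. But the theorem concerns $\mathbb{T}\to\mathbb{T}/\Gamma$ with $\mathbb{T}=T\setminus\Gamma\cdot\{0\}$, and $0$ is always a ramification point when $P\neq1$ (you even record this). Removing the image of that orbit deletes exactly one branch point from each profile, so the actual counts are $0$, $2$ (for $P\in\{C_3,C_4,C_6\}$ and hence also $A_4$), and $3$ (for $P=C_2$), matching Table~\ref{TableIntro}. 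With your numbers the table you would produce is $0\mapsto\mathfrak{C}_{[\tau]}$, $3\mapsto\mathfrak{O}$, $4\mapsto\mathfrak{S}_{[\tau]}$, which is not the statement to be proved. The fix is a one-line subtraction, but as written your argument does not establish the theorem as stated.
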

\begin{table}[h!]
        \begin{center}
            \begin{tabular}{cc}
            $\#$ branch points &  Lie algebra       \\
            \hline\\[-4mm]
            0 & $\mathfrak{C}_{[\tau]}$  \\
            2 & $\mathfrak{O}_{\phantom{\tau}}$  \\
            3 & $\mathfrak{S}_{[\tau]}$
            \end{tabular}
        \caption{Lie algebra associated to the number of branch points of the quotient map $\mathbb{T}\rightarrow \mathbb{T}/\Gamma$.} 
        \label{TableIntro}
        \end{center}
         \end{table}
Here we use the well known one-to-one relation between complex tori up to isomorphism and elements of $\mathrm{SL}_2(\mb Z)\backslash\mb H$.

The dimension of the abelianisation $\mathfrak{A}/[\mathfrak{A},\mathfrak{A}]$ can be computed directly from the definition for $\mathfrak{A}=\mathfrak{C}_{\tau},\,\mathfrak{O},\,\mathfrak{S}_{\tau}$, and is then seen to be equal to the number of branch points of $\mathbb{T}\rightarrow \mathbb{T}/\Gamma$.
In particular, the Lie algebras $\mathfrak{C}_{\tau},\mathfrak{O}$, and $\mathfrak{S}_{\tilde{\tau}}$, where $\tau,\tilde{\tau}\in \mathbb{H}$, are pairwise nonisomorphic, and if two automorphic Lie algebras in our classification are isomorphic, then the number of branch points of the corresponding quotient maps are equal.  
  
 \section{Definitions, notations and first examples}\label{Setup} 
 
In this section we will define automorphic Lie algebras, introduce notations and present two examples: one where the Riemann surface is the Riemann sphere and one where it is a complex torus. 

Let $\g$ be a complex finite dimensional Lie algebra and let $X$ be a compact Riemann surface. Suppose $\Gamma$ is a finite group acting on $\g$ by Lie algebra isomorphisms and on $X$ by biholomorphisms. We denote the group of Lie algebra isomorphisms by $\A(\g)$ and the group of biholomorphisms of $X$ by $\A(X)$.  The group actions can be described by homomorphisms 
$$
\sigma:\Gamma\rightarrow \A(X),\quad
\rho:\Gamma\rightarrow \A(\g).
$$ 
Consider a finite $\Gamma$-invariant set $\mathcal{S}\subset X$ and denote by $\mathbb{X}$ the complement $X\setminus \mathcal{S}$. Thus $\mathbb{X}$ is a punctured compact Riemann surface.
We use the notation $\ox$ for the $\CC$-algebra of meromorphic functions on $X$ holomorphic on $\mathbb{X}$. Denote by $\tilde{\sigma}$ the induced homomorphism $\Gamma\rightarrow \A(\ox)$ defined by $\tilde{\sigma}(\gamma)f=f\circ \sigma(\gamma)^{-1}$ for $\gamma\in \Gamma$. 
A \emph{current algebra} is formed by taking the tensor product over $\CC$ of $\g$ and $\ox$. This has a natural structure of a Lie algebra by declaring $$\left[\sum_iA_i\otimes f_i, \sum_jB_j\otimes g_j\right]:=\sum_{i,j}[A_i,B_j]\otimes f_ig_j,$$ where $A_i,B_j\in \g$, $f_i,g_j\in \ox$ and the bracket on the right-hand side is the bracket in $\g$. We let $\Gamma$ act naturally on $\g\otimes_{\CC}\ox$ by the diagonal action 
\begin{equation}\label{DiagonalAction}
\gamma\cdot (A\otimes f)=\rho(\gamma)A\otimes \tilde{\sigma}(\gamma)f.
\end{equation} We will also write $\rho\otimes\tilde{\sigma}(\gamma)$ for the homomorphism corresponding to the action defined in \eqref{DiagonalAction}.\\

 An automorphic Lie algebra is the Lie subalgebra of the current algebra $\g\otimes_{\CC}\ox$ of those elements invariant under the specified action.
\begin{Definition}
The automorphic Lie algebra $\mathfrak{A}=\mathfrak{A}(\g, \mathbb{X},\Gamma,\rho,\sigma)$
is defined as the algebra of fixed points of the action of $\Gamma$ on $\g\otimes_{\CC}\ox$. That is,
$$\mathfrak{A}(\g, \mathbb{X},\Gamma,\rho,\sigma)=(\g\otimes_{\CC}\ox)^{\rho\otimes \tilde{\sigma}(\Gamma)}=\{a\in \g\otimes_{\CC}\ox: \gamma \cdot a =a \,\, \text{for}\,\, \text{any}\,\,\gamma\in \Gamma\}.$$
\end{Definition}
From now onwards, we will use the acronym \emph{aLia} for ``automorphic Lie algebra".
We will sometimes simply write $\mathfrak{A}=(\g\otimes_{\CC}\ox)^{\Gamma}$, suppressing the additional data, if they are clear from the context.\\

As mentioned in the introduction, the Onsager algebra is a Lie algebra that appears prominently in the context of aLias. Roan discovered in 1991 \cite{roan1991onsager} that $\mathfrak{O}$ is in fact an aLia \emph{avant la lettre}. We will give an example of an aLia, where $X$ is the Riemann sphere, which is isomorphic to the Onsager algebra. 
\begin{Example}[The simplest concrete form of the Onsager algebra]
Let $X=\CC_{\infty}$ be the Riemann sphere and let $\g=\mathfrak{sl}_2$. Take $\mathcal{S}=\{0,\infty\}$. The punctured surface $\mathbb{X}$ is given by ${X}\setminus\mathcal{S}\cong \CC^*$ and the associated algebra of functions is $\mathcal{O}_{\mathbb{X}}=\mathbb{C}[z,z^{-1}]$, the Laurent polynomials. Let $\Gamma$ be the group with two elements, generated by $\gamma$, and define the homomorphism $\sigma:\Gamma\rightarrow \A(X)$ by $\sigma(\gamma)z= z^{-1}$.
The function $$J=\frac{z+2+z^{-1}}{4}$$ is invariant under this action. It is in fact a Hauptmodul, in the sense that any invariant meromorphic function on the Riemann sphere is a rational function in $J$. This can be shown using the theory of divisors. If we then restrict the poles to $\{0,\infty\}$ we obtain $\mathbb{C}[z,z^{-1}]^{\Gamma}=\mathbb{C}[J]$. We denote this space by $\mathbb{C}[z,z^{-1}]^{+}$ as it is the $+1$ eigenspace of $\gamma$. The $-1$ eigenspace $\mathbb{C}[z,z^{-1}]^{-}$ is given by $\mathbb{C}[J](z-z^{-1})$. Indeed, if $f\in \mathbb{C}[z,z^{-1}]^{-}$ then it necessarily vanishes at $1$ and $-1$, hence $f/(z-z^{-1})$ is an element of $\mathbb{C}[z,z^{-1}]^{+}$.
\\
Define the homomorphism $\rho:\Gamma\rightarrow \A(\mathfrak{sl}_2)$ by $$
\rho(\gamma)\begin{pmatrix}
a & b\\
c &-a
\end{pmatrix}=\begin{pmatrix}
a & -b\\
-c &-a
\end{pmatrix}.
$$
Let $$e=\begin{pmatrix}
0 & 1\\
0 &0
\end{pmatrix},\quad f=\begin{pmatrix}
0 & 0\\
1 &0
\end{pmatrix},\quad h=\begin{pmatrix}
1 & 0\\
0 &-1
\end{pmatrix}.$$
This action on $\mathfrak{sl}_2$ yields the eigenspace decomposition $$\mathfrak{sl}_2^{+}=\CC h,\quad \mathfrak{sl}_2^{-}=\CC\langle e,f\rangle.$$
We have 
\begin{align*}
\mathfrak{A}&=(\mathfrak{sl}_2\otimes_{\CC} \mathcal{O}_{\mathbb{X}})^{\rho\otimes \tilde{\sigma}(\Gamma)}\\
&=\mathfrak{sl}_2^{+}\otimes_{\CC} \ox^{+}\oplus \mathfrak{sl}_2^{-}\otimes_{\CC} \ox^{-}\\
&=\CC h \otimes_{\CC}\CC[J] \oplus \CC\langle e,f\rangle \otimes_{\CC} \CC[J](z-z^{-1}).
\end{align*}
Define $$H=h\otimes 1,\quad E=e\otimes (z-z^{-1})/4,\quad F=f\otimes (z-z^{-1})/4$$
so that we can write $\mathfrak{A}= \CC\langle E,F,H\rangle\otimes_{\CC} \CC[J]$ with brackets $$[H,E]=2E, \quad [H,F]=-2F, \quad [E,F]=H\otimes J(J-1).$$
We have chosen the Hauptmodul $J$ with values $0$ and $1$ at the branch points ($-1$ and $1$), but we could have chosen any other two (distinct) values. They appear as zeros in the last Lie bracket.
\\[2mm]
This Lie algebra made a lot of appearances in the literature. Roan studied the Onsager algebra and found this aLia (in another basis) as a concrete form of the Onsager algebra which enabled him to classify its finite dimensional irreducible representations \cite{roan1991onsager}. 
The work of Lombardo and Sanders \cite{lombardo2010classification}, Knibbeler, Lombardo and Sanders \cite{knibbeler2014automorphic} and Bury and Mikhailov \cite{MR4187211} showed that any aLia of $\Gamma$-equivariant meromorphic maps ${\mb{C}_\infty}\to\mathfrak{sl}_2$ with precisely two $\Gamma$-orbits of ramification points in its holomorphic domain and precisely one $\Gamma$-orbit outside its holomorphic domain, is isomorphic to $\mathfrak{A}$. Knibbeler, Lombardo and Veselov realised this Lie algebra in terms of modular functions, and provided an explicit isomorphism $\mathfrak{O}\to\mathfrak{A}$ by $$A_0\mapsto H,\quad A_1\mapsto H\otimes (2J-1)-2E+2F$$
cf. \cite[Theorem 2.5]{knibbeler2023automorphic}.
\end{Example}

We will see in Section \ref{MainResults} that the Onsager algebra can also be realised as certain aLias on a complex torus with enough symmetry, that is, tori on which the groups $C_4$ and $C_6$ act. As opposed to the case of the Riemann sphere, the symmetry group $C_2$ will not appear in this context. Instead, the symmetry groups $C_3,C_4,C_6$ and $A_4$ are the only ones that give rise to aLias isomorphic to the Onsager algebra. \\

For our first example of an aLia on a complex torus, we will use the \emph{Weierstrass $\wp$-function}. This function is defined as follows. Let $\LL$ be a lattice and consider the complex torus $T=\CC/\LL$. The function $\wp_{\LL}:T\rightarrow\CC$ defined by 
\begin{align}\label{DefWp}\wp_\Lambda(z) = \frac{1}{z^2}+\sum_{0\neq\omega\in \Lambda}\left(\frac{1}{(z-\omega)^2}-\frac{1}{\omega^2}\right),
\end{align} is called the Weierstrass $\wp$-function associated with the lattice $\LL$. It is a meromorphic function on $T$ with poles precisely at the lattice points $\omega\in \LL$.
The reference to a lattice $\Lambda$ will be suppressed when there is no confusion. It is clear that $\wp(-z)=\wp(z)$ and $\wp'(-z)=-\wp'(z)$, where $\wp'$ denotes the derivative of $\wp$. Furthermore, $\wp$ satisfies the equation $(\wp')^2=4\wp^3-g_2\wp-g_3$ where $g_2,g_3\in \CC$ are the elliptic invariants. The space of even meromorphic functions on $T$ which are holomorphic on $T\setminus \{0\}$ is given by the polynomials in $\wp$ with complex coefficients, $\CC[\wp]$, cf. \cite[Proposition V.3.1]{busam2009complex}. Using this and the relation $(\wp')^2=4\wp^3-g_2\wp-g_3$, it is not difficult to see that any odd meromorphic function on $T$, holomorphic on $T\setminus \{0\}$, is an element of $\CC[\wp]\wp'$. Since any function on $T$ is a sum of an even and odd function, we have established the following fact.
\begin{Lemma}\label{RegularFunctions}
The algebra of meromorphic function on complex torus $T$, holomorphic on $T\setminus\{0\}$, is given by $\mathcal{O}_{T\setminus \{0\}}=\CC[\wp,\wp']$. \end{Lemma}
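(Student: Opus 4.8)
The plan is to reduce the statement to the already-quoted result on even functions by splitting an arbitrary element of $\mathcal{O}_{T\setminus\{0\}}$ into its even and odd parts. Given a meromorphic $f$ on $T$ that is holomorphic on $T\setminus\{0\}$, write $f=f_{+}+f_{-}$ with $f_{\pm}(z)=\tfrac12\big(f(z)\pm f(-z)\big)$. Because $-\LL=\LL$, the involution $z\mapsto -z$ preserves the pole set $\{0\}$, so both $f_{+}$ and $f_{-}$ are again meromorphic on $T$ and holomorphic on $T\setminus\{0\}$; moreover $f_{+}$ is even and $f_{-}$ is odd. The even summand is handled immediately: by the cited Proposition V.3.1 of \cite{busam2009complex}, $f_{+}\in\CC[\wp]$. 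It thus remains to show that every odd $g\in\mathcal{O}_{T\setminus\{0\}}$ lies in $\CC[\wp]\wp'$.

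The crux is the odd case, and the natural idea is to divide by $\wp'$: since $\wp'$ is odd, the quotient $g/\wp'$ is even. The obstacle is that $\wp'$ has zeros, so a priori $g/\wp'$ could acquire poles where $\wp'$ vanishes, spoiling holomorphy on $T\setminus\{0\}$. I would resolve this by observing that those zeros are exactly matched by forced zeros of $g$. Concretely, $\wp'$ is elliptic of order $3$, with its only pole at $0$ (of order $3$), and it therefore has exactly three zeros counted with multiplicity; these occur at the three nonzero $2$-torsion points $w$, each a simple zero. At such a $w$ one has $-w\equiv w \pmod{\LL}$, so periodicity and oddness give $g(w)=g(-w)=-g(w)$, whence $g(w)=0$. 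Thus $g$ vanishes at each $w$ to order at least one, matching the simple zero of $\wp'$, so $g/\wp'$ is holomorphic there. Away from $0$ and the points $w$, the function $\wp'$ is nonvanishing and $g$ is holomorphic, so $g/\wp'$ is holomorphic on all of $T\setminus\{0\}$. Being even, it lies in $\CC[\wp]$ by the even case, and therefore $g=(g/\wp')\,\wp'\in\CC[\wp]\wp'$.

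Combining the two cases gives $\mathcal{O}_{T\setminus\{0\}}=\CC[\wp]+\CC[\wp]\wp'$ (the sum being direct by parity). Finally I would identify this with $\CC[\wp,\wp']$: the inclusion $\CC[\wp,\wp']\subseteq\mathcal{O}_{T\setminus\{0\}}$ is clear since $\wp$ and $\wp'$ are meromorphic on $T$ with poles only at $0$, and the reverse inclusion follows from the relation $(\wp')^2=4\wp^3-g_2\wp-g_3\in\CC[\wp]$, which lets one reduce any polynomial in $\wp,\wp'$ to one of $\wp'$-degree at most $1$, i.e.\ to an element of $\CC[\wp]+\CC[\wp]\wp'$. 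The main obstacle is thus the single subtlety in the odd case, namely controlling the zeros of $\wp'$ at the half-periods; this is precisely cancelled by the automatic vanishing of odd elliptic functions there, and everything else is bookkeeping with parities and the standard zero–pole count of $\wp'$.
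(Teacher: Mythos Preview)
Your proposal is correct and follows essentially the same route as the paper: split into even and odd parts, invoke the cited result for the even part, and for the odd part divide by $\wp'$ and reduce to the even case. The paper compresses the odd step into the phrase ``it is not difficult to see,'' whereas you spell out the key point that odd elliptic functions vanish at the nonzero $2$-torsion points, cancelling the simple zeros of $\wp'$; this is exactly the content hidden behind the paper's appeal to the relation $(\wp')^2=4\wp^3-g_2\wp-g_3$, which factors as $4(\wp-e_1)(\wp-e_2)(\wp-e_3)$ and thereby locates those zeros.
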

We now have all the ingredients to present our first, simple example of an aLia on a complex torus with symmetry group $C_2$. We will write $\mathbb{T}$ for $T\setminus \Gamma\cdot\{0\}$.
\begin{Example}\label{C2aLia}
Fix a complex torus $T=\CC/\LL$ with $\Lambda=\ZZ\oplus \ZZ\tau$. Let $\g=\mathfrak{sl}_2$ and $\Gamma =C_2$, generated by $\gamma$, where $\sigma:\Gamma\rightarrow \A(T)$ is the homomorphism defined by $\sigma(\gamma)z= -z.$ The punctured torus $\mathbb{T}$ is given by $T\setminus \{0\}$. Write $\wp=\wp_{\LL}$. Keeping in mind that $\mathcal{O}_{\mathbb{T}}=\CC[\wp,\wp']$ and $(\wp')^2=4\wp^3-g_2\wp-g_3$, it is easily seen that the algebra $\mathcal{O}_{\mathbb{T}}$ is given by $\mathcal{O}_{\mathbb{T}}=\mathcal{O}_{\mathbb{T}}^{+}\oplus \mathcal{O}_{\mathbb{T}}^{-}$, where $$\mathcal{O}_{\mathbb{T}}^{+}=\CC[\wp],\quad \mathcal{O}_{\mathbb{T}}^{-}=\CC[\wp]\wp'$$
denote the space of even and odd functions, respectively. 
Define the homomorphism $\rho:\Gamma\rightarrow \A(\mathfrak{sl}_2)$ by $$
\rho(\gamma)\begin{pmatrix}
a & b\\
c &-a
\end{pmatrix}=\begin{pmatrix}
a & -b\\
-c &-a
\end{pmatrix}.
$$
Let $$e=\begin{pmatrix}
0 & 1\\
0 &0
\end{pmatrix},\quad f=\begin{pmatrix}
0 & 0\\
1 &0
\end{pmatrix},\quad h=\begin{pmatrix}
1 & 0\\
0 &-1
\end{pmatrix}.$$
This action on $\mathfrak{sl}_2$ decomposes as $\mathfrak{sl}_2^{+}\oplus \mathfrak{sl}_2^{-}$, where $$\mathfrak{sl}_2^{+}=\CC h,\quad \mathfrak{sl}_2^{-}=\CC\langle e,f\rangle$$ are the eigenspaces with eigenvalue $1$ and $-1$, respectively. 
We have $$\mathfrak{A}=(\mathfrak{sl}_2\otimes_{\CC} \mathcal{O}_{\mathbb{T}})^{\rho\otimes \tilde{\sigma}(C_2)}=\mathfrak{sl}_2^{+}\otimes_{\CC} \ox^{+}\oplus \mathfrak{sl}_2^{-}\otimes_{\CC} \ox^{-}=\CC\langle h,e\otimes \wp',f\otimes \wp'\rangle \otimes_{\CC}\CC[\wp].$$
Define $$E=e\otimes \wp',\quad F=f\otimes \wp',\quad H=h\otimes 1.$$
Then $\mathfrak{A}=(\mathfrak{sl}_2\otimes_{\CC} \ox)^{\rho\otimes \tilde{\sigma}(C_2)}= \CC\langle E,F,H\rangle\otimes_{\CC} \CC[\wp]\cong \mathfrak{S}_{\tau}$ with brackets $$[H,E]=2E, \quad [H,F]=-2F, \quad [E,F]=H\otimes(4\wp^3-g_2(\tau)\wp-g_3(\tau)).$$ Observe that $(\mathfrak{sl}_2\otimes_{\CC} \mathcal{O}_{\mathbb{T}})^{\rho\otimes \tilde{\sigma}(C_2)}\not\cong \mathfrak{sl}_2\otimes_{\CC}\mathcal{O}_{\mathbb{T}}^{\tilde{\sigma}(C_2)}$, since the right-hand side is perfect, whereas the left-hand side is not.
\end{Example}

In the above example, $(\mathfrak{sl}_2\otimes_{\CC} \mathcal{O}_{\mathbb{T}})^{\rho\otimes \tilde{\sigma}(C_2)}\not\cong \mathfrak{sl}_2\otimes_{\CC}\mathcal{O}_{\mathbb{T}}^{\tilde{\sigma}(C_2)}$. This is a consequence of the fact that with this choice of homomorphism $\sigma$, the quotient $T/\sigma(C_2)$ has genus 0 or equivalently, the canonical projection $\pi:\TT\rightarrow \TT/\sigma(C_2)$ has ramification points. It is a general fact, cf. \cite{duffield2024wild}, that for a punctured compact Riemann surface $\mathbb{X}$ and a complex finite dimensional Lie algebra $\g$, we have $(\g\otimes_{\CC}\mathcal{O}_{\mathbb{X}})^{\Gamma}\not\cong \g\otimes_{\CC}\mathcal{O}_{\mathbb{X}}^{\Gamma}$ whenever $\mathbb{X}\rightarrow \mathbb{X}/\Gamma$ contains a ramification point $x_0$, when $\Gamma_{x_0}$ acts nontrivially on $\g$. We will later see how choosing a different homomorphism of $C_2\rightarrow \A(T)$ does yield an isomorphism.

\section{A classification scheme}
The aim of this section is to explain what we will precisely classify. Our objects of interest are aLias of the form $$\mf{A}(\mf{sl}_2,\TT,\Gamma,\rho,\sigma)=(\mf{sl}_2\otimes_{\CC}\ot)^{\rho\otimes\tilde{\sigma}(\Gamma)},$$ where $\Gamma$ is a finite group, $\rho:\Gamma\rightarrow \A(\mf{sl}_2)$ and $\sigma:\Gamma\rightarrow \A(T)$ homomorphisms, $\TT=T\setminus \mathcal{S}$, where $T$ is a complex torus and $\mathcal{S}$ an orbit of $\Gamma$ in $T$. We would like to determine the isomorphism classes of these aLias, which is the content of Theorem \ref{thm:classification}. We may assume that $\mathcal{S}$ is the orbit of $0$ because $\A(T)$ acts transitively on $T$ (using Lemma \ref{lem:IsomALias} below), and we may assume that $\rho$ and $\sigma$ are faithful, almost without losing generality, explained by Lemma \ref{NormalSubgroupAlia} and \ref{aLiaIdentity1}. These restrictions constitute our classification scheme.\\

We will first address some general aspects of invariant spaces with respect to some group action. Let $X$ be a compact Riemann surface, $\g$ a complex finite dimensional Lie algebra and let $\rho:\Gamma\rightarrow \A(\mathfrak{g})$ and $\sigma:\Gamma\rightarrow \A(X)$ be homomorphisms. Given a finite group $\Gamma$ and a $\Gamma$-module $M$, a standard technique for obtaining the space of invariants of the action of $\Gamma$ on $M$, denoted by $M^{\Gamma}$, is that of averaging over the group $\Gamma$. We will explain this method. Define the \emph{averaging operator} (also known as the \emph{Reynolds operator}) $\langle \cdot \rangle_{\Gamma}: M\rightarrow M$ by $$\langle m \rangle_{\Gamma}=\frac{1}{|\Gamma|}\sum_{\gamma\in \Gamma}\gamma\cdot m.$$ Clearly $\langle \cdot \rangle_{\Gamma}$ is linear if the action of $\Gamma$ on $M$ is linear. One sees that its image is $M^{\Gamma}$ and $\langle \langle m \rangle_{\Gamma} \rangle_{\Gamma}=\langle m \rangle_{\Gamma}$, thus $\langle \cdot \rangle_{\Gamma}$ is a projection onto $M^{\Gamma}$. Suppose $K$ is a normal subgroup of $\Gamma$, which we denote by $K\lhd\Gamma$. Then there is an obvious action of the quotient group $\Gamma/K$ on $M^{K}$ and we can check directly that $\langle \langle \cdot \rangle_{K} \rangle_{\Gamma/K}=\langle \cdot \rangle_{\Gamma}$. In particular, an aLia with symmetry group $\Gamma$ can be computed making use of the fact that it has a normal subgroup $K$.  Phrased in terms of our setting:
\begin{Lemma}\label{NormalSubgroupAlia}
Let $K\lhd \Gamma$. Then $(\g\otimes_{\CC}\mathcal{O}_{\mathbb{X}})^{\Gamma}=((\g\otimes_{\CC}\mathcal{O}_{\mathbb{X}})^{K})^{\Gamma/K}.$
\end{Lemma}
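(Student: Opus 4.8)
The plan is to regard $M := \g \otimes_{\CC} \mathcal{O}_{\mathbb{X}}$ as a $\Gamma$-module via the diagonal action \eqref{DiagonalAction} and to read off the identity of fixed-point spaces from the Reynolds-operator machinery developed just above the statement. First I would record that the action of the quotient $\Gamma/K$ on $M^K$ is genuinely well defined, which is the one place where normality of $K$ enters. For $m \in M^K$ and $\gamma \in \Gamma$ one has $k \cdot (\gamma \cdot m) = \gamma \cdot ((\gamma^{-1} k \gamma)\cdot m) = \gamma \cdot m$ for every $k \in K$, using $\gamma^{-1} k \gamma \in K$; hence $\gamma \cdot m \in M^K$. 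Moreover, if $\gamma K = \gamma' K$ then $\gamma^{-1}\gamma' \in K$ fixes $m$, so $\gamma \cdot m = \gamma' \cdot m$. Thus $(\gamma K)\cdot m := \gamma \cdot m$ is unambiguous, and this is the action appearing in the statement.

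With this in place the two inclusions are immediate. If $a \in M^\Gamma$, then $a$ is fixed in particular by every $k \in K$, so $a \in M^K$, and $(\gamma K)\cdot a = \gamma \cdot a = a$ for every coset, so $a \in (M^K)^{\Gamma/K}$. Conversely, if $a \in (M^K)^{\Gamma/K}$, then for an arbitrary $\gamma \in \Gamma$ we get $\gamma \cdot a = (\gamma K)\cdot a = a$, whence $a \in M^\Gamma$. This already settles $M^\Gamma = (M^K)^{\Gamma/K}$, so the substance of the argument is simply unwinding what the $\Gamma/K$ action means.

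The route I would prefer to present, however, reuses the identity $\langle \langle \cdot \rangle_{K} \rangle_{\Gamma/K} = \langle \cdot \rangle_{\Gamma}$ already noted in the text, and works at the level of projections. The Reynolds operator $\langle \cdot \rangle_{\Gamma}$ is a projection with image $M^\Gamma$. On the other hand $\langle \cdot \rangle_{K}$ projects $M$ onto $M^K$, and $\langle \cdot \rangle_{\Gamma/K}$ restricted to $M^K$ projects onto $(M^K)^{\Gamma/K}$; their composite therefore has image exactly $(M^K)^{\Gamma/K}$, since it sends every $m$ into $(M^K)^{\Gamma/K}$ and restricts to the identity there (an element that is both $K$-invariant and $\Gamma/K$-invariant is fixed by both averagings). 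Because the two operators coincide, so do their images, yielding the claim.

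I do not expect a genuine obstacle: once the diagonal action is fixed, the content is entirely formal. The only point requiring care — and the only place the hypothesis $K \lhd \Gamma$ is used — is the well-definedness and $M^K$-stability of the $\Gamma/K$ action; the equality of fixed-point spaces is then automatic. I would be sure to flag that normality is essential precisely here, since for a non-normal $K$ the symbol $\Gamma/K$ is not even a group and the right-hand side of the statement is undefined.
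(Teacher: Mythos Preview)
Your proposal is correct and matches the paper's approach: the paper states the lemma without a separate proof, treating it as an immediate consequence of the Reynolds-operator identity $\langle \langle \cdot \rangle_{K} \rangle_{\Gamma/K}=\langle \cdot \rangle_{\Gamma}$ established in the paragraph preceding the statement. You have in fact supplied more detail than the paper does, particularly in verifying that the $\Gamma/K$-action on $M^K$ is well defined, which the paper dismisses as ``obvious.''
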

Let $\tilde{\rho}=\rho\otimes \tilde{\sigma}$ and $\tilde{K}=\ker\rho\cdot\ker\sigma$.
First of all, observe that we may replace $\Gamma$ by $\Gamma/\ker(\rho\otimes \tilde{\sigma})$, making use of the fact that $\ker(\rho\otimes \tilde{\sigma})$ is a normal subgroup of $\Gamma$ and invoking Lemma \ref{NormalSubgroupAlia}. Trivially, $(\g\otimes_{\CC} \mathcal{O}_{\mathbb{X}})^{\ker(\rho\otimes \tilde{\sigma})}=\g\otimes_{\CC} \mathcal{O}_{\mathbb{X}}$. We can therefore assume that $\ker\rho\cap\ker\tilde\sigma=1$ without losing generality. 
\begin{Lemma}\label{aLiaIdentity1}
$(\g\otimes_{\CC}\mathcal{O}_{\mathbb{X}})^{\rho\otimes\tilde{\sigma}(\Gamma)}=(\g^{\rho(\ker\sigma)}\otimes_{\CC}\mathcal{O}_{\mathbb{X}}^{\tilde{\sigma}(\ker\rho)})^{\tilde{\rho}(\Gamma)/\tilde{\rho}(\tilde{K})}.$
\end{Lemma}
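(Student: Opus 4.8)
The plan is to derive the identity from a single application of Lemma~\ref{NormalSubgroupAlia} to the normal subgroup $\tilde{K}=\ker\rho\cdot\ker\sigma$, after computing the $\tilde{K}$-invariants explicitly. First I would record the elementary observation that $\ker\tilde{\sigma}=\ker\sigma$: since meromorphic functions in $\ox$ separate points, $\tilde{\sigma}(\gamma)=\id$ forces $\sigma(\gamma)=\id$. Consequently, for $\gamma\in\ker\rho$ the diagonal operator $\tilde{\rho}(\gamma)=\rho(\gamma)\otimes\tilde{\sigma}(\gamma)$ equals $\id_{\g}\otimes\,\tilde{\sigma}(\gamma)$, i.e.\ it acts only on the function factor, whereas for $\gamma\in\ker\sigma$ it equals $\rho(\gamma)\otimes\id$, acting only on the $\g$ factor. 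Both $\ker\rho$ and $\ker\sigma$ are normal in $\Gamma$, so their product $\tilde{K}$ is normal as well, and Lemma~\ref{NormalSubgroupAlia} applies with $K=\tilde{K}$.

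The heart of the argument, and the step I expect to be the main obstacle, is the identity $(\g\otimes_{\CC}\ox)^{\tilde{K}}=\g^{\rho(\ker\sigma)}\otimes_{\CC}\ox^{\tilde{\sigma}(\ker\rho)}$, which I would establish in two moves. Since $\tilde{K}$ is generated by $\ker\rho$ and $\ker\sigma$, a vector is $\tilde{K}$-invariant if and only if it is invariant under both, so $(\g\otimes_{\CC}\ox)^{\tilde{K}}=(\g\otimes_{\CC}\ox)^{\ker\rho}\cap(\g\otimes_{\CC}\ox)^{\ker\sigma}$. Because $\ker\rho$ acts only on $\ox$ and $\ker\sigma$ only on $\g$, the standard fact that over a field the invariants of a tensor product under a one-sided action split off that factor gives $(\g\otimes_{\CC}\ox)^{\ker\rho}=\g\otimes_{\CC}\ox^{\tilde{\sigma}(\ker\rho)}$ and $(\g\otimes_{\CC}\ox)^{\ker\sigma}=\g^{\rho(\ker\sigma)}\otimes_{\CC}\ox$. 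The delicate point is the infinite-dimensionality of $\ox$, which I would handle by expanding any element in a basis of the finite-dimensional $\g$ (respectively in a basis of the finite-dimensional span of the finitely many functions occurring) and reading off invariance coefficient by coefficient. Intersecting the two subspaces then yields $\g^{\rho(\ker\sigma)}\otimes_{\CC}\ox^{\tilde{\sigma}(\ker\rho)}$, using the linear-algebra fact that $(V'\otimes W)\cap(V\otimes W')=V'\otimes W'$ for subspaces $V'\subseteq V$ and $W'\subseteq W$, proved by choosing a complement of $W'$ in $W$ and comparing components.

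It remains to identify the residual group action. Combining the $\tilde{K}$-computation with Lemma~\ref{NormalSubgroupAlia} gives $(\g\otimes_{\CC}\ox)^{\Gamma}=\bigl(\g^{\rho(\ker\sigma)}\otimes_{\CC}\ox^{\tilde{\sigma}(\ker\rho)}\bigr)^{\Gamma/\tilde{K}}$, where $\Gamma/\tilde{K}$ acts through the well-defined diagonal action inherited from $\tilde{\rho}$ (well-defined precisely because $\tilde{K}$ acts trivially on the invariant subspace). To match the stated form I would note that $\ker\tilde{\rho}=\ker\rho\cap\ker\sigma\subseteq\tilde{K}$, so the effective action factors as $\tilde{\rho}(\Gamma)/\tilde{\rho}(\tilde{K})\cong\Gamma/\tilde{K}$; under the running assumption $\ker\rho\cap\ker\sigma=1$ the homomorphism $\tilde{\rho}$ is faithful, and this quotient is literally $\tilde{\rho}(\Gamma)/\tilde{\rho}(\tilde{K})$. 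Since these groups act identically on the invariant subspace, their fixed-point sets coincide, which is exactly the asserted identity.
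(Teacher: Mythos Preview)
Your proposal is correct and follows essentially the same route as the paper: compute the $\tilde{K}$-invariants of the tensor product and then apply Lemma~\ref{NormalSubgroupAlia}. The paper does this more tersely, observing directly that $\tilde{\rho}(\gamma_1\gamma_2)(A\otimes f)=\rho(\gamma_2)A\otimes\tilde{\sigma}(\gamma_1)f$ for $\gamma_1\in\ker\rho$, $\gamma_2\in\ker\sigma$ and asserting the invariant subspace from there, whereas you spell out the intersection $(\g\otimes\ox)^{\ker\rho}\cap(\g\otimes\ox)^{\ker\sigma}$ and the linear-algebra identity $(V'\otimes W)\cap(V\otimes W')=V'\otimes W'$; your version also makes explicit why $\Gamma/\tilde{K}$ and $\tilde{\rho}(\Gamma)/\tilde{\rho}(\tilde{K})$ act identically, which the paper leaves implicit.
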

\begin{proof}
Clearly $\tilde{\rho}(\tilde{K})\lhd \tilde{\rho}(\Gamma)$ since $\tilde{K}\lhd \Gamma$ and $\tilde{\rho}$ is a homomorphism. Suppose $A\in \g$ and $f\in \mathcal{O}_{\mathbb{X}}$. Now, if $\tilde{\rho}( \gamma_1 \gamma_2)\in \tilde{\rho}(\tilde{K})$ with $ \gamma_1\in \ker(\rho)$ and $ \gamma_2\in \ker(\sigma)$, then $$\tilde{\rho}( \gamma_1 \gamma_2)(A\otimes f)=\rho( \gamma_1 \gamma_2)\otimes \tilde{\sigma}( \gamma_1 \gamma_2)(A\otimes f)=\rho( \gamma_2)A\otimes \tilde{\sigma}( \gamma_1)f,$$
where the homomorphism $\rho\otimes\tilde{\sigma}$ is defined in \eqref{DiagonalAction}. Hence $(\g\otimes_{\CC}\mathcal{O}_{\mathbb{X}})^{\tilde{\rho}(\tilde{K})}=\g^{\rho(\ker\sigma)}\otimes_{\CC}\mathcal{O}_{\mathbb{X}}^{\tilde{\sigma}(\ker \rho)}.$ By employing Lemma \ref{NormalSubgroupAlia}, this proves the claim.
\end{proof}
Assume now the case of $X=T$ and denote again by $\mathbb{T}$ the punctured complex torus $T\setminus \sigma(\Gamma)\cdot \{0\}$ (see Section \ref{Setup}).
We know that $\mathcal{O}_{\mathbb{T}}^{\tilde{\sigma}(\ker\rho)}\cong\mathcal{O}_{\mathbb{T}/\sigma(\ker\rho)}.$ Now, $\mathbb{T}/\sigma(\ker\rho)$ is a punctured compact Riemann surface of genus 0 or 1. In the former case, we refer to previous literature \cite{lombardo2010classification, knibbeler2017higher, knibbeler2020hereditary, MR4187211}. In the latter case, we may assume that $\sigma(\ker \rho)=\tilde{\sigma}(\ker \rho)=1$. Together with $\ker\sigma\cap\ker\rho=1$ this is equivalent to $\ker\rho=1$. Now consider the homomorphism $\sigma$. If $\ker\sigma$ is nontrivial, the Lie algebra $\mf{sl}_2^{\rho(\ker\sigma)}$ has dimension less than 3 and therefore is abelian, and we will rule out this class.\\

The following simple lemma plays an important role in our classification. It appears in a more general setting in the context of equivariant map algebras in \cite{neher2012irreducible}. Recall that $\mathbb{X}$ stands for the punctured compact Riemann surface $X\setminus \mathcal{S}.$
\begin{Lemma}\label{lem:IsomALias}
Suppose $\rho:\Gamma\rightarrow \A(\mathfrak{g})$ and $\sigma:\Gamma\rightarrow \A(X)$ are homomorphisms and $\tilde{\rho}=\rho\otimes \tilde{\sigma}:\Gamma\rightarrow \A(\g\otimes_{\CC}\mathcal{O}_{\mathbb{X}})$. Let $\mathfrak{A}$ be the aLia defined by these actions: $$\mathfrak{A}=\{a\in \g\otimes_{\CC}\mathcal{O}_{\mathbb{X}}: \tilde{\rho}(\gamma)a=a \,\, \text{for}\,\, \text{any}\,\, \gamma\in \Gamma\},$$
where $a=A\otimes f$, with $A\in \g$ and $f\in \ox$.
Define a second action of $\Gamma$ on $\g$ and $X$ by $\rho'(\gamma)=\tau_1\rho(\gamma)\tau_1^{-1}$ and $\sigma'(\gamma)=\tau_2\sigma(\gamma)\tau_2^{-1}$, where $\tau_1\in \A(\g)$ and $\tau_2\in \A(X)$ such that $\tau_2(\mathcal{S})=\mathcal{S}$. Let $\tilde{\rho}'$ be given by $\tilde{\rho}'=\rho'\otimes \tilde{\sigma}'$ and define  $$\mathfrak{A}'=\{a\in \g\otimes_{\CC}\mathcal{O}_{\mathbb{X}}: \tilde{\rho}'(\gamma)a=a \,\, \text{for}\,\, \text{any}\,\, \gamma\in \Gamma\}.$$
Then $\mathfrak{A}\cong \mathfrak{A}'$ as Lie algebras.
\end{Lemma}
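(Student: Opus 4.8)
The plan is to realise the isomorphism explicitly as the restriction of a single Lie algebra automorphism of the ambient current algebra $\g\otimes_{\CC}\ox$. I would set $\Phi=\tau_1\otimes\tilde{\tau}_2$, where $\tilde{\tau}_2\in\A(\ox)$ is the automorphism induced by $\tau_2$ exactly as $\tilde{\sigma}$ is induced by $\sigma$, namely $\tilde{\tau}_2 f=f\circ\tau_2^{-1}$. First I would check that $\tilde{\tau}_2$ genuinely preserves $\ox$: since $\tau_2$ is a biholomorphism of $X$ with $\tau_2(\mathcal{S})=\mathcal{S}$, pullback by $\tau_2^{-1}$ sends meromorphic functions to meromorphic functions and preserves the locus of holomorphy $\mathbb{X}=X\setminus\mathcal{S}$; this is precisely where the hypothesis $\tau_2(\mathcal{S})=\mathcal{S}$ enters. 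As $\tau_1$ is a Lie algebra automorphism and $\tilde{\tau}_2$ an algebra automorphism, $\Phi$ is a bijective linear map that respects the current-algebra bracket (it acts as an automorphism on each tensor factor), so $\Phi\in\A(\g\otimes_{\CC}\ox)$.

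The central step is to show that $\Phi$ intertwines the two group actions, i.e.\ $\Phi\circ\tilde{\rho}(\gamma)=\tilde{\rho}'(\gamma)\circ\Phi$ for every $\gamma\in\Gamma$. It suffices to verify this on simple tensors $A\otimes f$, where it splits into one identity per factor. On the $\g$-factor the defining relation $\rho'(\gamma)=\tau_1\rho(\gamma)\tau_1^{-1}$ gives at once $\tau_1\rho(\gamma)=\rho'(\gamma)\tau_1$. On the $\ox$-factor I would unwind the induced (contravariant) actions: one has $\tilde{\tau}_2\,\tilde{\sigma}(\gamma)f=f\circ\sigma(\gamma)^{-1}\circ\tau_2^{-1}$, while $\tilde{\sigma}'(\gamma)\,\tilde{\tau}_2 f=f\circ\tau_2^{-1}\circ\sigma'(\gamma)^{-1}$, and the relation $\sigma'(\gamma)^{-1}=\tau_2\,\sigma(\gamma)^{-1}\tau_2^{-1}$ makes the two compositions coincide. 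Combining the two factors yields the intertwining identity by linearity.

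Finally, intertwining immediately transports invariants: if $a\in\mathfrak{A}$, then for all $\gamma$ we have $\tilde{\rho}'(\gamma)\Phi(a)=\Phi\,\tilde{\rho}(\gamma)a=\Phi(a)$, so $\Phi(\mathfrak{A})\subseteq\mathfrak{A}'$, and applying the same reasoning to $\Phi^{-1}$ (which intertwines the actions in the opposite direction) gives the reverse inclusion, whence $\Phi(\mathfrak{A})=\mathfrak{A}'$. Since $\Phi$ is a Lie algebra automorphism of the ambient algebra, its restriction to $\mathfrak{A}$ is a Lie algebra isomorphism onto $\mathfrak{A}'$, as required. I expect the only genuinely delicate points to be bookkeeping: confirming that $\tilde{\tau}_2$ is well defined on $\ox$ (the role of $\tau_2(\mathcal{S})=\mathcal{S}$) and keeping the inverses straight in the function-side intertwining, since the induced actions are contravariant. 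Everything else is formal.
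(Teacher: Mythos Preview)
Your proof is correct and follows essentially the same approach as the paper's: both construct the explicit automorphism $\Phi=\tau_1\otimes\tilde{\tau}_2$ of the ambient current algebra, verify that it intertwines $\tilde{\rho}$ and $\tilde{\rho}'$, and conclude that it restricts to a Lie algebra isomorphism $\mathfrak{A}\to\mathfrak{A}'$. In fact you spell out more of the details (the role of $\tau_2(\mathcal{S})=\mathcal{S}$ and the contravariant bookkeeping on the $\ox$-factor) than the paper, which simply calls the intertwining a ``straightforward verification''.
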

\begin{proof}
Define $\varphi: \g\otimes_{\CC}\ox\rightarrow \g\otimes_{\CC}\ox$ on simple tensors by $\varphi(A\otimes f)=\tau_1(A)\otimes \tilde{\tau}_2f$, where $A\in \g$, $f\in \ox$ and $\tilde{\tau}_2f(z)=f(\tau_2^{-1}z)$ for $z\in \mathbb{X}$, and extend $\varphi$ $\CC$-linearly to the whole space. It is a straightforward verification that $\varphi$ is Lie algebra isomorphism and that $\varphi\circ\tilde{\rho}(\gamma)=\tilde{\rho}'(\gamma)\circ \varphi$ for all $\gamma\in \Gamma$, that is, it intertwines $\tilde{\rho}$ and $\tilde{\rho}'$. This implies that $a\in \g\otimes_{\CC}\ox$ is invariant with respect to the action defined by $\tilde{\rho}$ if and only if $\varphi(a)$ is invariant with respect to the action defined by $\tilde{\rho}'$. Thus $\varphi$ restricts to an isomorphism between $\mf{A}$ and $\mf{A}'.$
\end{proof}
\section{Symmetry groups with $\g=\mathfrak{sl}_2$}
In this section we will investigate which symmetry groups $\Gamma$ will play a role in the confined context of our classification scheme.
This requires an understanding of the finite subgroups of $\A(\mathfrak{sl}_2)$ and of $\Aut{T}$.
\begin{Lemma}[Klein]\label{lem:Klein} Finite subgroups of $\A(\mathfrak{sl}_2)$ are classified by the list
 $$C_N,\quad D_N,\quad A_4,\quad S_4, \quad A_5.$$ 
If two finite subgroups of $\A(\mathfrak{sl}_2)$ are isomorphic, then they are conjugate.
\end{Lemma}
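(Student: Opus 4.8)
The plan is to reduce the statement to Klein's classical classification of finite rotation groups, by first identifying $\A(\mathfrak{sl}_2)$ concretely. Since $\mathfrak{sl}_2$ is simple with Dynkin diagram $A_1$, which has no nontrivial symmetries, every Lie algebra automorphism is inner; hence $\A(\mathfrak{sl}_2)=\Inn(\mathfrak{sl}_2)=\Ad(\SL_2(\CC))$, and since $\ker\Ad=\{\pm\I\}$ we obtain $\A(\mathfrak{sl}_2)\cong\PSL_2(\CC)\cong\PGL_2(\CC)$. Under this identification an automorphism acts on $\cp$ by a M\"obius transformation, and the problem becomes the classification of finite subgroups of $\PSL_2(\CC)$ up to conjugacy.

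First I would reduce to the compact case. Given a finite subgroup $G\subset\PSL_2(\CC)$, lift it to $\SL_2(\CC)$ and average an arbitrary positive-definite Hermitian form over $G$ to obtain a $G$-invariant one; diagonalising this form conjugates $G$ into the maximal compact subgroup $\mathrm{PSU}_2\cong\SO_3(\mathbb{R})$. It therefore suffices to classify finite subgroups of $\SO_3(\mathbb{R})$ up to conjugacy, since conjugacy inside $\SO_3(\mathbb{R})$ implies conjugacy inside $\PSL_2(\CC)$.

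Next comes the core counting argument. Every nontrivial element of $G$ has finite order, hence is semisimple, and so fixes exactly two antipodal poles on the sphere. Counting the incidences between nontrivial group elements and their poles in two ways --- once by summing $2$ over the $|G|-1$ nontrivial elements, and once by summing over the finitely many orbits $O_1,\dots,O_k$ of poles with stabiliser orders $e_1,\dots,e_k$ --- gives, after dividing by $|G|$, the Diophantine relation
$$ 2-\frac{2}{|G|}=\sum_{i=1}^{k}\left(1-\frac{1}{e_i}\right). $$
The left-hand side lies in $[1,2)$ while each summand lies in $[\tfrac12,1)$, forcing $k\in\{2,3\}$. Enumerating the finitely many integer solutions yields exactly the cyclic groups $C_N$ (two poles, $k=2$), the dihedral groups $D_N$, and the three exceptional solutions $(e_1,e_2,e_3)=(2,3,3),(2,3,4),(2,3,5)$, which one identifies with the tetrahedral, octahedral and icosahedral groups $A_4$, $S_4$, $A_5$ respectively.

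Finally I would establish that isomorphic subgroups are conjugate. For a cyclic group this is immediate: an element of order $N$ in $\PSL_2(\CC)$ is conjugate to $z\mapsto e^{2\pi i/N}z$, so any two copies of $C_N$ are conjugate. For the remaining types the abstract isomorphism type determines the orbit signature $(e_1,\dots,e_k)$ appearing in the Diophantine relation, and a rigidity argument shows that the configuration of poles realising a given signature is unique up to $\SO_3(\mathbb{R})$; this is where I expect the main difficulty to lie, since the counting equation alone pins down only the orders $e_i$ and not the geometric placement of the exceptional orbits. In practice, as this is the classical theorem of Klein, I would cite a standard reference for the rigidity and content myself with sketching the counting argument above.
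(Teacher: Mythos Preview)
Your proposal is correct and follows the same overall architecture as the paper: identify $\A(\mathfrak{sl}_2)\cong\PSL_2(\CC)$, reduce a finite subgroup to $\mathrm{PSU}_2\cong\SO_3$ by averaging a Hermitian form, and then appeal to Klein's classification of finite rotation groups. The differences are in emphasis. You actually sketch the pole-counting Diophantine argument that underlies Klein's classification, whereas the paper simply cites Klein's result in its geometric form (symmetry groups of regular pyramids, polygons, and the three Platonic solids). Conversely, for the conjugacy statement the paper gives a cleaner geometric argument than your rigidity sketch: once the groups are realised as symmetry groups of unit-norm regular polyhedra, two such polyhedra of the same type are carried onto each other by an element of $\SO_3$, and this element conjugates their symmetry groups. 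This bypasses the need to analyse pole configurations directly and handles all cases uniformly, including the cyclic one.
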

\begin{proof}
The group $\A(\mathfrak{sl}_2)$ is isomorphic to $\mathrm{PSL}_2(\CC)$ by the adjoint representation. A finite subgroup of $\mathrm{PSL}_2(\CC)$ leaves a Hermitian inner product invariant, and is therefore conjugate to a subgroup of $\mathrm{PSU}_2$ which in turn is isomorphic to $\mathrm{SO}_3$. Therefore, the statement of the lemma is equivalent to the analogue statement for the Lie group $\mathrm{SO}_3$ instead of $\A(\mathfrak{sl}_2)$. Klein showed that the subgroups of $\mathrm{SO}_3$ are precisely the orientation preserving isometries of $\mathbb{R}^3$ that fix the regular pyramids, regular polygons, regular tetrahedrons, regular octahedrons and regular icosahedrons, centred at the origin \cite{klein1956lectures}. This yields the groups listed in the statement, respectively. We may moreover assume that the vertices of the polyhedra have norm $1$. If we then take two polyhedra of the same type, there is an element of $\mathrm{SO}_3$ that transforms one to the other. This group element conjugates the symmetry groups of the two polyhedra, and proves the last statement of the lemma.
\end{proof}
\begin{Lemma}
\label{lem:groups up to isomorphism}
A finite group $G$ embeds in $\A(\mathfrak{sl}_2)$ and $\Aut{T}$ if and only if $G$ equals \begin{enumerate}
\item$C_N,\quad N\ge 1$
\item$D_N,\quad N\ge 2$
\item$A_4,\quad (g_2(\tau)=0)$
\end{enumerate}
\end{Lemma}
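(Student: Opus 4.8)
The plan is to combine Klein's classification (Lemma~\ref{lem:Klein}) with a structural analysis of $\Aut{T}$. First I would recall the description of the biholomorphism group of $T=\CC/\Lambda$: every biholomorphism is an affine map $z\mapsto az+b$ with $a\Lambda=\Lambda$, so $\Aut{T}$ is the semidirect product $T\rtimes C_m$, where the translation subgroup $T$ is normal and the stabiliser $C_m$ of the origin is the finite cyclic multiplier group $\{a\in\CC^\ast:a\Lambda=\Lambda\}$. This group has order $m\in\{2,4,6\}$: one has $m=2$ generically, $m=4$ exactly for the square lattice ($g_3=0$), and $m=6$ exactly for the hexagonal lattice ($g_2=0$), the latter generated by multiplication by a primitive sixth root of unity.

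The key reduction is that for any finite subgroup $G\le\Aut{T}$ the intersection $G\cap T$ is a normal abelian subgroup, and the second isomorphism theorem gives $G/(G\cap T)\cong GT/T\hookrightarrow C_m$, so the ``rotation part'' $\bar G$ is cyclic of order dividing $m$. By Lemma~\ref{lem:Klein}, embedding $G$ in $\A(\mathfrak{sl}_2)$ already forces $G\in\{C_N,D_N,A_4,S_4,A_5\}$, so it remains to intersect this list with the groups admitting such a presentation. I would rule out $A_5$ and $S_4$ here: since $A_5$ is simple and nonabelian, $G\cap T$ must be trivial, forcing $A_5\hookrightarrow C_m$, which is impossible; for $S_4$ the only abelian normal subgroups are $1$ and $V_4=C_2\times C_2$, and both alternatives ($S_4\hookrightarrow C_m$ or $S_4/V_4\cong S_3\hookrightarrow C_m$) contradict the cyclicity of $\bar G$. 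The same computation pins down $A_4$: here $G\cap T$ is abelian, normal, and nontrivial (else $A_4$ would be cyclic), hence $G\cap T=V_4$ is forced, whence $A_4/V_4\cong C_3\hookrightarrow C_m$ requires $3\mid m$, i.e. $m=6$, which is precisely the condition $g_2(\tau)=0$.

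For the converse I would exhibit the embeddings explicitly. The cyclic group $C_N$ is realised by any translation of order $N$, which exists on every torus. The dihedral group $D_N$ is realised by such a translation $t_v$ together with the involution $s\colon z\mapsto -z$ (always present, since $-\Lambda=\Lambda$); a direct check gives $st_vs^{-1}=t_v^{-1}$, and $s$ is not a power of $t_v$ since it fixes $0$, so $\langle t_v,s\rangle\cong D_N$. For $A_4$ on the hexagonal torus I would take the $2$-torsion translations $T[2]\cong V_4$ together with multiplication by the primitive cube root $\omega$; computing its action on the three nonzero $2$-torsion points $\tfrac12,\tfrac{\omega}{2},\tfrac{1+\omega}{2}$ shows that $\omega$ cyclically permutes them, so the group generated is the semidirect product $V_4\rtimes C_3\cong A_4$. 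Since all three groups appear in Klein's list, they also embed in $\A(\mathfrak{sl}_2)$, completing the equivalence.

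The main obstacle is the structural step: one must correctly identify $\Aut{T}$ as $T\rtimes C_m$ with $m\le 6$ and extract the cyclicity of the rotation part $\bar G$, since this single constraint is what simultaneously eliminates $S_4$ and $A_5$ and forces the hypothesis $g_2(\tau)=0$ in the $A_4$ case. The explicit constructions are then routine once the correct lattice symmetry is in hand.
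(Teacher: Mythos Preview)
Your proof is correct and rests on the same semidirect product description $\Aut{T}=t(T)\rtimes C_m$ that the paper uses, but the mechanism you extract from it is slightly different. The paper observes that $[\Aut{T},\Aut{T}]\subset t(T)$, so any finite subgroup $\Gamma$ has \emph{abelian derived subgroup}; this kills $S_4$ in one line since $[S_4,S_4]=A_4$ is nonabelian, and handles $A_5$ via simplicity. You instead work with the short exact sequence $1\to G\cap T\to G\to \bar G\to 1$ and use that $\bar G\hookrightarrow C_m$ is cyclic; this forces you to enumerate the abelian normal subgroups of $S_4$, but in return the same argument immediately pins down the constraint $3\mid m$ (hence $g_2(\tau)=0$) for $A_4$, which the paper defers to the explicit constructions in the next lemma. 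Your approach is thus marginally more uniform, while the paper's commutator trick disposes of $S_4$ more quickly; both are equally valid and of comparable length.
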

\begin{proof}
By Lemma \ref{lem:Klein}, we know that the finite subgroups of $\A(\mathfrak{sl}_2)$ are given by $$C_N,\quad D_N,\quad A_4,\quad S_4, \quad A_5.$$ We will show that only the subgroups as given in the statement, are simultaneously subgroups of $\A(T)$ as well. 

Let $T$ be a complex torus. It follows from \cite[Proposition III.1.11]{miranda1995algebraic} that any automorphism $\sigma$ of $T$ is of the form $\sigma(z)=\epsilon z+\alpha$, where $\epsilon$ is a suitable root of unity (for which $T$ has multiplication by $\epsilon$) and $\alpha\in T$. This immediately gives a semi-direct product structure $\mathrm{Aut}(T)=\A_0(T) \ltimes t(T)$ where $\A_0(T)$ is the group of automorphisms fixing zero and $t(T)$ is the group of translations of $T$. Now, take two automorphisms $\sigma,\sigma'$ defined by $\sigma(z)=\epsilon z+\alpha$ and $\sigma'(z)=\epsilon'z+\alpha'$. Then $[\sigma,\sigma'](z)=z-\alpha'-\epsilon'\alpha+\epsilon\alpha'+\alpha$, so that $[\sigma,\sigma']$ is indeed a translation. Hence $[\mathrm{Aut}(T),\mathrm{Aut}(T)]\subset t(T)$.

 Now, let $r\in t(T)$ be any translation of $T$, say $r(z)=z+\alpha$. We will show that there are $s\in  \mathrm{Aut}_0(T)$ and $r'\in  \mathrm{Aut}(T)$ such that $r=[s,r']$. Suppose $s(z)=-z$ (any torus has multiplication by $-1$) and let $r'(z)=z-\frac{\alpha}{2}$. Then $sr'(z)=-z+\frac{\alpha}{2}$. Hence $[s,r'](z)=-(-z-\frac{\alpha}{2}-\frac{\alpha}{2})=z+\alpha=r(z)$. This proves the claim that $[\A(T),\A(T)]=t(T)$.

For the proof that $C_N$, $D_N$ are subgroups of $\A(T)$, as well as $A_4$ for a suitable torus $T$, we refer to Lemma \ref{lem:groups up to conjugation} below.

Let us now argue that $S_4$ and $A_5$ are not subgroups of $\A(T)$ for any complex torus $T$. To see that $S_4$ is not a subgroup of $\A(T)$, note that $[S_4,S_4]=A_4$, which is nonabelian, whereas for $\Gamma\subset \A(T)$, the commutator subgroup $[\Gamma,\Gamma]\subset t(T)$ is abelian.

 Finally, suppose for a contradiction that $A_5\subset \A(T)$ for some complex torus $T$. Then $A_5\cong C_{\ell}\ltimes H$ for some normal subgroup $H\neq 1$, since $\A(T)=\A_0(T)\ltimes t(T)$. However, $A_5$ is simple and thus it cannot have a proper normal subgroup. This shows that the list above is all there is in the intersection of finite subgroups of $\A(\mathfrak{sl}_2)$ and $\A(T)$, for all complex tori $T$.
\end{proof}
\begin{Remark}
Observe that we can write the groups from Lemma \ref{lem:groups up to isomorphism} in terms of semidirect products as follows: $$1\ltimes C_N, \quad C_2\ltimes C_N,\quad C_{3}\ltimes (C_2\times C_2),\quad C_{\ell}\ltimes 1,$$
where $\ell\in \{1,2,3,4,6\}$ and $N\in \NN.$ When we write $G\ltimes K\subset \A_0(T)\ltimes t(T)$, we shall tacitly assume that $G\subset \A_0(T)$ and $K\subset t(T)$.
\end{Remark}
 \begin{Lemma}
        \label{lem:groups up to conjugation}
        The subgroups of $\Aut{T}$ 
        which are isomorphic to 
        one of the finite groups of Lemma \ref{lem:groups up to isomorphism} 
        are classified by the following list,  up to conjugation.
        \begin{enumerate}
            \item $C_N=\langle r: r^N=1\rangle$, 
            \begin{center}
            \begin{enumerate}
                \item 
                $
                C_{\ell}\subset \A_0(T),\quad r(z)=e^{ 2\pi i/\ell}z\quad (\ell\in\{2,3,4,6)\}).
               $ \label{item1A}
                \item 
                $
                C_N \subset t(T),\quad r(z)=z+\alpha \quad (\alpha\,\,\text{is a}\,\, N\text{-torsion point in }T).
                $ \label{item1B}
            \end{enumerate}
            \end{center}
            \item $D_N=\langle s,r:s^2=r^N=1, \, (sr)^2=1\rangle$,
            \begin{center}
            \begin{enumerate} 
                \item
                $
                C_2\times C_2\subset t(T),\quad s(z)=z+\tau/2, \quad r(z)=z+1/2.
                $ \label{item2A}
                \item
               $
                C_2\ltimes C_N\subset \A_0(T)\ltimes t(T),
                \quad s(z)=-z, 
                \quad r(z)=z+\alpha
                $\\
                $(\alpha\,\,\text{is a}\,\, N\text{-torsion point in }T)$. \label{item2B}
            \end{enumerate}
            \end{center}
            \item
            $A_4=\langle s, r_1,r_2: s^3=r_1^2=r_2^2=1,\, sr_1s^{-1}=r_1r_2=r_2r_1,\, sr_2s^{-1}=r_1\rangle$, $\tau=e^{2\pi i/3}$,
                \label{item3}
            \begin{description}
            \item $s(z)=e^{2\pi i/3}z,\quad r_1(z)= z+1/2.$
            \end{description}
        \end{enumerate}
    \end{Lemma}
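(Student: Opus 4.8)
The plan is to exploit the semidirect product structure $\A(T)=\A_0(T)\ltimes t(T)$ together with the \emph{linear part} homomorphism $\pi:\A(T)\to\A_0(T)$ that sends $z\mapsto\epsilon z+\alpha$ to $\epsilon$; its kernel is exactly $t(T)$. Two elementary facts will do most of the work. First, if $\epsilon\neq 1$ then multiplication by $1-\epsilon$ preserves $\Lambda$ (since both $1$ and $\epsilon$ do) and is a nonconstant holomorphic self-map of $T$, hence surjective, so $(1-\epsilon)z_0=\alpha$ has a solution; conjugating $z\mapsto\epsilon z+\alpha$ by the translation $t_{z_0}$ then yields the fixed-point map $z\mapsto\epsilon z$. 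Thus any element with nontrivial linear part is translation-conjugate into $\A_0(T)$, and such a map has order exactly $\mathrm{ord}(\epsilon)$. Second, a direct computation gives $\sigma t_\beta\sigma^{-1}=t_{\epsilon\beta}$ when $\pi(\sigma)=\epsilon$, so conjugation acts on $t(T)$ through $\pi$, i.e. by multiplication by roots of unity, and in particular conjugation by a translation fixes $t(T)$ pointwise. Finally, I will repeatedly invoke the fact established in the proof of Lemma~\ref{lem:groups up to isomorphism} that $[\A(T),\A(T)]=t(T)$, so the commutator subgroup of any $\Gamma\subset\A(T)$ lies in $t(T)$, and the standard crystallographic/complex-multiplication fact that a root of unity preserving a lattice in $\CC$ has order in $\{1,2,3,4,6\}$.

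For $C_N=\langle r\rangle$ I split on $\epsilon=\pi(r)$. If $\epsilon=1$ then $r$ is a translation by a point of exact order $N$, which is item~\ref{item1B}. If $\epsilon\neq 1$, the first fact conjugates $r$ to $z\mapsto\epsilon z$, whose order is $\mathrm{ord}(\epsilon)$; hence $N=\mathrm{ord}(\epsilon)\in\{2,3,4,6\}$ and we are in item~\ref{item1A} with $\ell=N$, the torus being forced to admit the corresponding multiplication.

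For $D_N=\langle s,r\rangle$ I use that its commutator subgroup is $\langle r^2\rangle$, which must lie in $t(T)$, so $\pi(r)^2=1$ and $\pi(r)\in\{1,-1\}$. When $N\ge 3$, $\pi(r)=-1$ would force $r^2=\id$, contradicting $\mathrm{ord}(r)=N$, so $r=t_\alpha$ is a translation with $\alpha$ of order $N$; moreover $\pi(s)=1$ would put $\langle r,s\rangle$ inside the abelian group $t(T)$, which is impossible, so $\pi(s)=-1$ and $s$ has a fixed point. Conjugating by a translation to place that fixed point at $0$ leaves $r$ unchanged (translations commute) and gives $s(z)=-z$, $r(z)=z+\alpha$, i.e. item~\ref{item2B}. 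When $N=2$ the group $D_2\cong C_2\times C_2$ is abelian and the abstract type no longer fixes the conjugacy class; here the deciding invariant is the image of $\pi|_{D_2}$: if it is trivial all three nontrivial elements are $2$-torsion translations and the group is the full $2$-torsion subgroup (item~\ref{item2A}), while if it is nontrivial exactly one nontrivial element is a translation and the other two are conjugate to $z\mapsto -z$, giving item~\ref{item2B} with $N=2$.

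For $A_4=(C_2\times C_2)\rtimes C_3$ the commutator subgroup is the Klein four subgroup $V$, so $V\subset t(T)$ consists of order-$2$ translations and is therefore exactly the group of translations by the three nonzero $2$-torsion points $\tfrac12,\tfrac{\tau}{2},\tfrac{1+\tau}{2}$. The order-$3$ generator $s$ acts on $V$ by conjugation, hence by multiplication by $\epsilon=\pi(s)$ on the $2$-torsion, and this action must be a $3$-cycle on the nonzero points; combined with $\mathrm{ord}(s)=\mathrm{ord}(\epsilon)=3$ this forces $\epsilon=e^{\pm 2\pi i/3}$, which preserves the lattice only for the hexagonal torus $\tau=e^{2\pi i/3}$. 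Conjugating by a translation to fix $s$'s fixed point at $0$ (again leaving $V$ untouched) and replacing $s$ by $s^{-1}$ if necessary yields $s(z)=e^{2\pi i/3}z$ and $r_1(z)=z+\tfrac12$, which is item~\ref{item3}; a short check using $\tau^2=-1-\tau$ confirms that multiplication by $\tau$ cyclically permutes $\tfrac12,\tfrac{\tau}{2},\tfrac{1+\tau}{2}$. I expect this $A_4$ case to be the main obstacle: one must show simultaneously that an order-$3$ automorphism acting nontrivially on the $2$-torsion can exist only on the hexagonal lattice, and that all generators can be conjugated at once into the stated form, the delicate point being that the single translation used to normalise $s$ does not disturb the already-normalised $V$ — which holds precisely because conjugation by a translation is trivial on $t(T)$.
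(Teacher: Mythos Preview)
Your proof is correct and follows essentially the same approach as the paper's: both use that commutator subgroups land in $t(T)$, conjugate an element with nontrivial linear part by a translation to pin its fixed point at the origin, and observe that this conjugation leaves the translation part of the group untouched. Your presentation is slightly more systematic in that you introduce the linear-part homomorphism $\pi$ explicitly at the outset and prove once that any element with $\pi\neq 1$ is translation-conjugate into $\A_0(T)$ (via surjectivity of $1-\epsilon$ on $T$), whereas the paper argues each case a bit more ad hoc; but the logical content is the same.
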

    Here we recall that the case \ref{item1A} with $\ell=3,4$ or $6$ 
    and the case \ref{item3} only occurs for special tori, 
    cf. \cite[Proposition III.1.12]{miranda1995algebraic},
    and the other cases occur in any torus.
    \begin{proof} 
        We leave it to the reader to verify that the subgroups in the statements satisfy the group relations and 	thus are indeed of the mentioned isomorphism class.

        We start with the cyclic groups. 
        If an element $r$ of $\A(T)$ fixes an element $p$ of $T$, 
        then $t(p) r t(p)^{-1}\in \A_0(T)$ (where $t(p)(z)=z+p$)
        and $t(p) \langle r\rangle t(p)^{-1}$ is as described in \ref{item1A}.
        If on the other hand $r$ has no fixed points, 
        then it is as described in \ref{item1B}.
    
        Now for the dihedral groups, 
        consider first the abelian case $D_2=C_2\times C_2$. 
        This group occurs precisely once in $t(T)$, as in \ref{item2A}, 
        since both groups have precisely 3 elements of order $2$.

        Suppose now that $D_2\subset \A(T)$ has an element $s$ 
        which is not contained in $t(T)$. 
        Using a conjugation as we did to classify the cyclic groups, 
        we may assume that $s\in\A_0(T)$, so that $s(z)=-z$.
        There must also be a nontrivial element of $D_2$, say $r$, 
        contained in $t(T)$.
        Indeed, the elements of order $2$ in $\A(T)\setminus t(T)$ 
        are the maps $z\mapsto -z+b$. A product of two such maps is in $t(T)$.
        Thus the group $D_2$ is as described in \ref{item2B} with $N=2$.
        
        For the remaining dihedral groups $D_N\subset\A(T)$ ($N\ge 3$) 
        we notice that $[D_N,D_N]=\langle r^2\rangle\subset [\A(T),\A(T)]\subset t(T)$
        implies that $r\in t(T)$.
        At least one of the order $2$ elements $D_N\setminus\langle r \rangle$
        must be outside of $t(T)$ because $D_N$ is nonabelian. 
        Taking a conjugate of the group, we may again assume that such an element,
        say $s$, fixes $0$. Thus we arrive at the remaining groups 
        described in \ref{item2B}.

        Finally, for the group $A_4$ we argue as follows.
        The derived subgroup 
        $[A_4,A_4]=\langle r_1,r_2\rangle\subset [\A(T),\A(T)]\subset t(T)$
        is uniquely determined by the only $3$ elements in $t(T)$ of order $2$.
        The element $s$ must be outside of $t(T)$ for $A_4$ is nonabelian.
        Taking a conjugate of the group, we may assume that $s$ fixes zero, 
        which leaves two options: $s(z)=e^{2\pi i/3}z$ and $s(z)=e^{4\pi i/3}z$. 
        Both options generate the same group, 
        since there is an automorphism of $A_4$
        sending $s$ to $s^2$.
    \end{proof}
    \begin{Remark}
        The lemma above gives a classification of subgroups 
        $\Gamma\subset\A(T)$.
        One may be interested in a classification of embeddings 
        (injective homomorphisms)
        $\Gamma\to\A(T)$ instead. 
        The difference is in the consideration of $\A(\Gamma)$:
        two embeddings $\Gamma\to\A(T)$ have the same image 
        if and only if one is the composition of the other with 
        an automorphism of $\Gamma$.
    \end{Remark}

\begin{Lemma}\label{lem:branch points}
Let $\Gamma\subset \A_0(T)\ltimes t(T)$ be a finite subgroup and $\TT=T\setminus \Gamma\cdot\{0\}$. Then for the canonical projection $\pi:\TT\rightarrow \TT/\Gamma$, we have $$\#(\text{branch points of }\pi:\TT\rightarrow \TT/\Gamma)=\begin{cases} 0\quad \text{if}\,\, \Gamma\subset t(T),\\
2\quad \text{if}\,\, \Gamma=C_{\ell}\ltimes 1\,\, \text{or} \,\, \Gamma=C_3\ltimes (C_2\times C_2),\\
3\quad \text{if}\,\, \Gamma=C_2\ltimes C_N,
\end{cases}$$
where $\ell\in \{3,4,6\}$.
\end{Lemma}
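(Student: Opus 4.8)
The plan is to identify the branch points of $\pi:\TT\to\TT/\Gamma$ with the $\Gamma$-orbits of \emph{ramification points} -- points $x\in\TT$ whose stabiliser $\Gamma_x$ is nontrivial -- since for a quotient by a finite group acting holomorphically the fibre over a point of $\TT/\Gamma$ is a single $\Gamma$-orbit, and that point is a branch point precisely when the orbit consists of ramification points. Counting branch points thus becomes counting ramification orbits, and no genus computation is needed. I would first record two facts that drive everything. Because $\Gamma$ acts by biholomorphisms on a Riemann surface, each $\Gamma_x$ acts faithfully on the one-dimensional tangent space via $g\mapsto dg_x$, so $\Gamma_x$ embeds in $\CC^*$ and is \emph{cyclic}. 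And an automorphism $z\mapsto\epsilon z+\alpha$ with $\epsilon\neq1$ has exactly $|\epsilon-1|^2$ fixed points: these solve $(\epsilon-1)z=-\alpha$, and multiplication by $\epsilon-1$ is an isogeny of degree $|\epsilon-1|^2$; a nontrivial translation ($\epsilon=1$, $\alpha\neq0$) has none. Hence only the non-translations ramify.

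Next I would reduce the count on $\TT$ to a count on $T$. In each normal form of Lemma \ref{lem:groups up to conjugation} with $\Gamma\not\subset t(T)$, a nontrivial rotation ($z\mapsto e^{2\pi i/\ell}z$, or $z\mapsto-z$) fixes the origin, so $0$ is a ramification point and $\Gamma\cdot\{0\}$ is a single ramification orbit. Since $\TT=T\setminus\Gamma\cdot\{0\}$, the branch points of $\pi$ are exactly the ramification orbits of $T$ other than $\Gamma\cdot\{0\}$; I would therefore compute the total number of ramification orbits in $T$ and subtract one. When $\Gamma\subset t(T)$, every nontrivial element is a fixed-point-free translation, the action on $T$ is free, and there are no branch points, which is the first line of the table.

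I would then carry out the three remaining counts. For $\Gamma=C_\ell=\langle r\rangle$ with $r(z)=e^{2\pi i/\ell}z$, a point fixed by the order-$d$ subgroup ($d\mid\ell$) is fixed by its generator, so their number is $N_d:=|e^{2\pi i/d}-1|^2=4\sin^2(\pi/d)$, giving $N_2=4,\ N_3=3,\ N_4=2,\ N_6=1$; cyclicity of stabilisers lets me pass by inclusion--exclusion over the subgroup lattice to the number with each exact stabiliser, and orbit--stabiliser then gives the orbit counts. For $\ell\in\{3,4,6\}$ this yields three ramification orbits in $T$ (signatures $(3,3,3),(2,4,4),(2,3,6)$), of which $\Gamma\cdot\{0\}$ is one, leaving $2$. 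For $A_4=C_3\ltimes(C_2\times C_2)$ the order-$2$ elements are translations and fix nothing, so all ramification comes from the four Sylow $3$-subgroups; each fixes $N_3=3$ points, distinct ones have disjoint fixed sets (a shared point would force a non-cyclic stabiliser), giving $12$ points of stabiliser $C_3$ and $12/4=3$ orbits, hence $2$ after removing $\Gamma\cdot\{0\}$. For $\Gamma=C_2\ltimes C_N=D_N$ with $s(z)=-z$, $r(z)=z+\alpha$, the rotations $r^k$ are translations, so ramification comes from the $N$ reflections $z\mapsto-z+k\alpha$; each fixes $N_2=4$ points, distinct reflections have disjoint fixed sets (a shared point is fixed by their product, a nontrivial translation), giving $4N$ points of stabiliser $C_2$ and $4N/N=4$ orbits, hence $3$ after removing $\Gamma\cdot\{0\}$.

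The main obstacle will be the orbit bookkeeping: I must check that the fixed-point sets of distinct rotations (for $A_4$) and of distinct reflections (for $D_N$) are genuinely disjoint, and that in each ramified case $\Gamma\cdot\{0\}$ is exactly one ramification orbit rather than several or none. Both points are controlled by the cyclicity of stabilisers established at the outset, which rules out a point lying in two distinct maximal rotation or reflection subgroups and simultaneously pins down $\Gamma_0$, hence the size $|\Gamma|/|\Gamma_0|$ of the removed orbit. As an independent check on every orbit count, I would verify via Riemann--Hurwitz that each $T\to T/\Gamma$ presents $\cp$ with one of the four elliptic orbifold signatures $(2,2,2,2),(3,3,3),(2,4,4),(2,3,6)$, the removed orbit $\Gamma\cdot\{0\}$ accounting for the branch point at the image of the origin.
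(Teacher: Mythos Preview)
Your proof is correct and rests on the same core identification the paper uses: branch points of $\TT\to\TT/\Gamma$ correspond to $\Gamma$-orbits of points with nontrivial stabiliser, and one then counts those orbits case by case. The paper's own proof is extremely terse---it treats a single case by direct inspection and asserts that the remaining cases follow similarly---whereas you supply a uniform machinery the paper does not make explicit: the isogeny-degree formula $\#\mathrm{Fix}(z\mapsto\epsilon z+\alpha)=|\epsilon-1|^2$, the cyclicity of stabilisers via the tangent action, and the disjointness arguments for fixed-point sets of distinct reflections or rotation subgroups. Your systematic reduction (count ramification orbits on $T$, then subtract one for the removed orbit $\Gamma\cdot\{0\}$) and the orbit--stabiliser bookkeeping make the $A_4$ and $D_N$ counts transparent, and the Riemann--Hurwitz cross-check via the elliptic orbifold signatures is a nice independent verification that the paper omits. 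In short: same idea, but your execution is more complete and more robust than the paper's sketch.
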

\begin{proof}
A point $p\in T$ is a ramification point of $\pi$ if the multiplicity of $\pi$ at $p$, denoted by $\mathrm{mult}_p(\pi)$, is at least 2. By \cite[Theorem III.3.4]{miranda1995algebraic}, $\mathrm{mult}_p(\pi)$ equals $|\Gamma_p|$, where $\Gamma_p=\{\gamma\in\Gamma: \gamma\cdot p=p\}$. It is clear there are no branch points if $\Gamma\subset t(T)$ since a translation has no fixed points. Let now $\Gamma=C_3\subset \A_0(T)$ where $T=\CC/\ZZ\oplus \ZZ\omega_6$. It is straightforward to check that $|\Gamma_p|>1$ if and only if $p\in \{0,1/2,\omega_6/2,(1+\omega_6)/2\}$. The only points that are in the same $\Gamma$-orbit, are $1/2$ and $\omega_6/2$. Hence the total number of branch points of $\pi$ after deleting $\Gamma\cdot \{0\}$ equals 2. The other cases follow by similar arguments.
\end{proof}

\section{Functional aspects of aLia on complex tori}
We will now discuss the functional aspects of aLias on complex tori. Since we are considering meromorphic $\mf{sl}_2$-valued maps on a complex torus, a starting point is to understand meromorphic functions on a complex torus. There are multiple (equivalent) approaches to this, e.g. via \emph{Jacobi theta-functions} or \emph{Weierstrass functions}. We shall further develop the approach with the Weierstrass $\wp$-function, as we have started to do before Example \ref{C2aLia}.\\

Below we will formulate some elementary properties of the Weierstrass $\wp$-function. Define the values of the half lattice points of a given lattice $\LL_{\tau}=\ZZ\oplus \ZZ\tau$, under $\wp_{\LL_{\tau}}$: \begin{align}\label{HalfLatticePoints}
e_1=\wp_{\LL_{\tau}}(1/2),\quad e_2=\wp_{\LL_{\tau}}(\tau/2), \quad e_3=\wp_{\LL_{\tau}}((1+\tau)/2).\end{align}
Observe that $\wp_{\LL_{\tau}}'$ vanishes at the half lattice points, since $\wp_{\LL_{\tau}}'(-z)=-\wp_{\LL_{\tau}}'(z)$.
 
As we have remarked before Example \ref{C2aLia}, there is the relation between the square of the derivative $\wp'$ and $\wp$ itself, where we now write $\wp$ without specifying the lattice: \begin{align}\label{wpRel}
(\wp')^2=4\wp^3-g_2\wp-g_3.
\end{align}
Because $\wp'$ vanishes at the half lattice points, the right-hand side of \eqref{wpRel} factors as $4(\wp-e_1)(\wp-e_2)(\wp-e_3)$. From this, one gets the following relations between the $e_i$: $$e_1+e_2+e_3=0,\quad e_1e_2+e_1e_3+e_2e_3=-g_2/4,\quad e_1e_2e_3=g_3/4.$$
The next basic lemma records how Weierstrass $\wp$-functions associated to homothetic lattices are related.
\begin{Lemma}\label{scaling lemma}
For $\alpha\in\mathbb{C}^{*}$, we have $\wp_{\alpha\Lambda}(z)=\alpha^{-2}\wp_{\Lambda}\left(\alpha^{-1}z\right).$
\end{Lemma}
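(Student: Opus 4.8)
The plan is to prove this directly from the defining series \eqref{DefWp}, exploiting the fact that each summand is homogeneous under scaling of the lattice. The only structural input needed is that multiplication by $\alpha$ gives a bijection of $\Lambda\setminus\{0\}$ onto $(\alpha\Lambda)\setminus\{0\}$, which is immediate since $\alpha\ne 0$; this is what lets me reindex the lattice sum.

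First I would write out $\wp_{\alpha\Lambda}(z)$ straight from the definition, with the sum ranging over $0\ne\omega'\in\alpha\Lambda$, and then substitute $\omega'=\alpha\omega$, so that $\omega$ now ranges over $\Lambda\setminus\{0\}$. Next I would factor a common $\alpha^{-2}$ out of every term: the leading term $z^{-2}$ becomes $\alpha^{-2}(\alpha^{-1}z)^{-2}$, the term $(z-\alpha\omega)^{-2}$ becomes $\alpha^{-2}(\alpha^{-1}z-\omega)^{-2}$, and the subtracted term $(\alpha\omega)^{-2}$ becomes $\alpha^{-2}\omega^{-2}$. Collecting these, the resulting bracketed expression is precisely the series defining $\wp_{\Lambda}(\alpha^{-1}z)$, which yields the claimed identity $\wp_{\alpha\Lambda}(z)=\alpha^{-2}\wp_{\Lambda}(\alpha^{-1}z)$.

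The one point requiring genuine care, and the closest thing to an obstacle, is the justification of reindexing an infinite series term by term. I would note that the Weierstrass series converges absolutely and locally uniformly on $\mathbb{C}\setminus\Lambda$ (this is exactly what makes $\wp_{\Lambda}$ a well-defined meromorphic function in \eqref{DefWp}), so rearranging the order of summation along the bijection $\omega\mapsto\alpha\omega$ is legitimate and preserves the sum. With absolute convergence secured, the remainder of the argument is a purely formal manipulation, and no analytic subtlety beyond this remains.
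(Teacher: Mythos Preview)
Your argument is correct: the direct substitution $\omega'=\alpha\omega$ in the series \eqref{DefWp}, together with absolute convergence to justify the reindexing, is the standard proof of this identity. The paper itself states this lemma without proof, treating it as a basic fact, so there is no proof in the paper to compare against; your verification is exactly what one would supply if asked to fill in the details.
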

We write out explicitly two special cases of Lemma \ref{scaling lemma} that will play an important role in the course of the paper. These cases correspond to the square lattice $\LL_i=\ZZ\oplus \ZZ i$ and the hexagonal lattice $\LL_{\omega_6}=\ZZ\oplus \ZZ\omega_6$, which are the only lattices (up to homothety) that satisfy the property $\alpha \LL=\LL$ for some $\alpha\neq \pm 1$.
\begin{align}
\wp_{\LL_i}(i^{-1}z)&=-\wp_{\LL_i}(z), &\wp_{\LL_i}'(i^{-1}z)&=-i\wp_{\LL_i}'(z),\label{wpScaling1}\\
\wp_{\LL_{\omega_6}}(\omega_6^{-1}z)&=\omega_6^{2}\wp_{\LL_{\omega_6}}(z),&\wp_{\LL_{\omega_6}}'(\omega_6^{-1}z)&=-\wp_{\LL_{\omega_6}}'(z)\label{wpScaling2}.
\end{align}
The next lemma describes the isotypical components of the action $C_{\ell}\subset \A_0(T)$ on $\mathcal{O}_{\mathbb{T}}$, where $\ell\in \{2,3,4,6\}$ and $T$ is a suitable complex torus. These decompositions will be used in Theorem \ref{ClaLias}, where we construct a basis for aLias with symmetry group $C_{\ell}$. We denote by $\chi_j$, $j=0,\ldots,\ell-1$ the characters of the group $C_{\ell}$, which are defined as $\chi_j(r^k)=\omega_\ell^{jk}$, where $r$ generates $C_{\ell}$.
\begin{Lemma}\label{IsotypicalComponents}
Let $\sigma_{\ell}:C_{\ell}\rightarrow \A_0(T)$ be the homomorphism $\sigma_{\ell}(r)(z)=\omega_{\ell}z$ and let $\wp:=\wp_{\LL}$. Suppose that $\mathcal{S}=\{0\}$ and $\mathbb{T}=T\setminus \mathcal{S}$.
Then for $\Gamma=C_2$ we have
$$
\mathcal{O}_{\mathbb{T}}^{\chi_0}=\CC[\wp],\quad \mathcal{O}_{\mathbb{T}}^{\chi_1}=\CC[\wp]\wp'.
$$
For $\Gamma=C_3$ we have
$$
\mathcal{O}_{\mathbb{T}}^{\chi_0}=\CC[\wp'],\quad \mathcal{O}_{\mathbb{T}}^{\chi_1}=\CC[\wp']\wp^2,\quad \mathcal{O}_{\mathbb{T}}^{\chi_2}=\CC[\wp']\wp.
$$
For $\Gamma=C_4$ we have
\begin{align*}
\mathcal{O}_{\mathbb{T}}^{\chi_0}&=\CC[\wp^2], &
\mathcal{O}_{\mathbb{T}}^{\chi_1}&=\CC[\wp^2]\wp \wp',\\
\mathcal{O}_{\mathbb{T}}^{\chi_2}&=\CC[\wp^2]\wp, &
\mathcal{O}_{\mathbb{T}}^{\chi_3}&=\CC[\wp^2]\wp'.
\end{align*}
For $\Gamma=C_6$ we have 
\begin{align*}
\mathcal{O}_{\mathbb{T}}^{\chi_0}&=\CC[\wp^3], &
\mathcal{O}_{\mathbb{T}}^{\chi_1}&=\CC[\wp^3]\wp^2 \wp',\\
\mathcal{O}_{\mathbb{T}}^{\chi_2}&=\CC[\wp^3]\wp, &
\mathcal{O}_{\mathbb{T}}^{\chi_3}&=\CC[\wp^3]\wp',\\
\mathcal{O}_{\mathbb{T}}^{\chi_4}&=\CC[\wp^3]\wp^2,&
\mathcal{O}_{\mathbb{T}}^{\chi_5}&=\CC[\wp^3]\wp\wp'.
\end{align*}
\end{Lemma}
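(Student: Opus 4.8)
The plan is to leverage the explicit description $\mathcal{O}_{\mathbb{T}}=\CC[\wp,\wp']$ from Lemma \ref{RegularFunctions}. As recalled in the discussion preceding that lemma, $\mathcal{O}_{\mathbb{T}}=\CC[\wp]\oplus\CC[\wp]\wp'$ is the splitting into even and odd functions, the relation $(\wp')^2=4\wp^3-g_2\wp-g_3$ letting one reduce every higher power of $\wp'$ into $\CC[\wp]+\CC[\wp]\wp'$. Since $\wp$ is transcendental over $\CC$, the set
$$
\mathcal{B}=\{\wp^n: n\ge 0\}\cup\{\wp^n\wp': n\ge 0\}
$$
is a $\CC$-basis of $\mathcal{O}_{\mathbb{T}}$. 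Because $C_{\ell}$ is abelian, its irreducible representations are the one-dimensional characters $\chi_j$, so $\mathcal{O}_{\mathbb{T}}$ is the direct sum of its isotypical components $\mathcal{O}_{\mathbb{T}}^{\chi_j}$, and each $\mathcal{O}_{\mathbb{T}}^{\chi_j}$ is simply the eigenspace of $\tilde{\sigma}_{\ell}(r)$ for the eigenvalue $\chi_j(r)=\omega_{\ell}^{j}$.

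The one computation I need is how $r$ acts on the two generators. Since $\sigma_{\ell}$ is an automorphism of $T$ we have $\omega_{\ell}\LL=\LL$, so applying Lemma \ref{scaling lemma} with $\alpha=\omega_{\ell}$ gives $\wp(\omega_{\ell}^{-1}z)=\omega_{\ell}^{2}\wp(z)$, and differentiating in $z$ yields $\wp'(\omega_{\ell}^{-1}z)=\omega_{\ell}^{3}\wp'(z)$ (these are exactly \eqref{wpScaling1} and \eqref{wpScaling2} for $\ell=4,6$). Hence $\tilde{\sigma}_{\ell}(r)\wp=\omega_{\ell}^{2}\wp$ and $\tilde{\sigma}_{\ell}(r)\wp'=\omega_{\ell}^{3}\wp'$, and as $\tilde{\sigma}_{\ell}(r)$ is an algebra automorphism (a pullback by a biholomorphism) every element of $\mathcal{B}$ is an eigenvector:
$$
\tilde{\sigma}_{\ell}(r)\,\wp^n=\omega_{\ell}^{2n}\wp^n,\qquad \tilde{\sigma}_{\ell}(r)\,\wp^n\wp'=\omega_{\ell}^{2n+3}\wp^n\wp'.
$$
Thus $\wp^n\in\mathcal{O}_{\mathbb{T}}^{\chi_j}$ iff $2n\equiv j\pmod{\ell}$ and $\wp^n\wp'\in\mathcal{O}_{\mathbb{T}}^{\chi_j}$ iff $2n+3\equiv j\pmod{\ell}$. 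Since every basis element is homogeneous for the character grading, $\mathcal{O}_{\mathbb{T}}^{\chi_j}$ is precisely the $\CC$-span of the monomials of $\mathcal{B}$ satisfying the relevant congruence.

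It then remains to sort $\mathcal{B}$ by these congruences for each $\ell\in\{2,3,4,6\}$ and recognise the asserted spaces. For $\ell=2,4,6$ this is immediate: the exponents $2n$ and $2n+3$ have fixed parity, and since $\ell$ is even the even and odd residues mod $\ell$ do not mix, so each $\chi_j$ collects either only pure powers $\wp^n$ (for $j$ even) or only terms $\wp^n\wp'$ (for $j$ odd), all lying in a single residue class; reading these off gives the stated $\CC[\wp]$-, $\CC[\wp^2]$-, and $\CC[\wp^3]$-spans respectively.

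The only case needing an extra ingredient is $\ell=3$, where the modulus is odd. Here $\mathcal{O}_{\mathbb{T}}^{\chi_0}$ collects both $\{\wp^{3m}\}$ and $\{\wp^{3m}\wp'\}$, giving $\CC[\wp^3]\oplus\CC[\wp^3]\wp'$, and similarly $\mathcal{O}_{\mathbb{T}}^{\chi_2}=\CC[\wp^3]\wp\oplus\CC[\wp^3]\wp\wp'$ and $\mathcal{O}_{\mathbb{T}}^{\chi_1}=\CC[\wp^3]\wp^2\oplus\CC[\wp^3]\wp^2\wp'$. To match the stated form $\CC[\wp']$ one uses that the hexagonal lattice (the only lattice, up to homothety, carrying a $C_3$-action) has $g_2=0$: then $(\wp')^2=4\wp^3-g_3$, so $\CC[(\wp')^2]=\CC[\wp^3]$ and hence $\CC[\wp']=\CC[\wp^3]\oplus\CC[\wp^3]\wp'=\mathcal{O}_{\mathbb{T}}^{\chi_0}$, whence also $\mathcal{O}_{\mathbb{T}}^{\chi_2}=\CC[\wp']\wp$ and $\mathcal{O}_{\mathbb{T}}^{\chi_1}=\CC[\wp']\wp^2$. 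I expect this identification of $\CC[\wp']$ with $\CC[\wp^3]\oplus\CC[\wp^3]\wp'$ through $g_2=0$ to be the single non-bookkeeping step; the vanishing $g_2=0$ itself can be read off by substituting $z\mapsto\omega_6^{-1}z$ into \eqref{wpRel} and comparing the coefficient of $\wp$, since $\omega_6^{2}\neq 1$ forces $g_2(\omega_6^2-1)=0$.
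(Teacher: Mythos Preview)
Your proof is correct and follows essentially the same approach as the paper: both compute the action of $r$ on $\wp$ and $\wp'$ via the scaling relation and then use the Weierstrass identity $(\wp')^2=4\wp^3-g_2\wp-g_3$ to match the asserted descriptions. The only organisational difference is that the paper verifies the two inclusions directly (the stated spaces lie in the correct eigenspaces, and the reduction $\wp^3\mapsto\tfrac14(\wp')^2+\tfrac{g_2}{4}\wp+\tfrac{g_3}{4}$ shows they exhaust $\CC[\wp,\wp']$), whereas you first sort the basis $\mathcal{B}$ by eigenvalue and then, for $\ell=3$, invoke $g_2=0$ to rewrite $\CC[\wp^3]\oplus\CC[\wp^3]\wp'$ as $\CC[\wp']$; your explicit remark that $g_2=0$ is forced by the $C_3$-symmetry is a small extra clarification that the paper leaves implicit.
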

\begin{proof}
We will only prove the claims for $\Gamma=C_3$; the rest follows in the same way.
We know by Lemma \ref{RegularFunctions} that $\mathcal{O}_{\mathbb{T}}=\mathbb{C}[\wp,\wp']$.
Using (\ref{wpScaling2}) we see that $\CC[\wp']\subset \mathcal{O}_{\mathbb{T}}^{\chi_0}$, $\CC[\wp']\wp^2\subset \mathcal{O}_{\mathbb{T}}^{\chi_1}$ and $\CC[\wp']\wp\subset \mathcal{O}_{\mathbb{T}}^{\chi_2}.$
It remains to show that $\mathbb{C}[\wp,\wp']$ is a subset of $\CC[\wp']\oplus \CC[\wp']\wp^2\oplus \CC[\wp']\wp$. If $\wp^a\wp'^b$ is a monomial with $a\ge 3$, then we can substitute $\wp^3=\frac{1}{4}(\wp')^2+\frac{g_2}{4}\wp+\frac{g_3}{4}$, using \eqref{wpRel}, to obtain a polynomial in $\wp$ and $\wp'$ where all exponents of $\wp$ are less than $a$. Repeating this substitution finitely many times, we see that $\wp^a\wp'^b$ is an element of $\CC[\wp']\oplus \CC[\wp']\wp^2\oplus \CC[\wp']\wp$. Hence $\mathbb{C}[\wp,\wp']$ is indeed a subset of $\CC[\wp']\oplus \CC[\wp']\wp^2\oplus \CC[\wp']\wp$.
\end{proof}
ALias on complex tori with symmetry group $\Gamma$ and base Lie algebra $\mathfrak{sl}_2$, as in our classification, turn out to be generated by three generators over their algebra of invariants, as we will establish in Section \ref{MainResults}. We would therefore like to know what the algebra of invariants $\mathcal{O}_{\mathbb{T}}^{\Gamma}$ is, to obtain explicit forms of the Lie algebras. For aLias with symmetry group $C_N\subset t(T)$, this is described in the following lemma. For $\alpha\in T$ of finite order, let $\LL_{(\alpha)}=\ZZ+\ZZ\alpha+\ZZ\tau$ and $T_{(\alpha)}=\CC/\LL_{(\alpha)}$.
\begin{Lemma}\label{lem:CN invariants}
Let $\sigma_{\alpha}:C_N\rightarrow \A(T)$ be given by $\sigma_{\alpha}(r)(z)=z+\alpha$. Then $$\mathcal{O}_{\mathbb{T}}^{\tilde{\sigma}_{\alpha}(C_N)}= \CC[\wp_{\Lambda_{(\alpha)}}, \wp_{\Lambda_{(\alpha)}}'],$$
\end{Lemma}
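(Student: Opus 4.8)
The plan is to translate invariance under $\tilde{\sigma}_\alpha(C_N)$ into a single periodicity condition, and then to recognise the invariant functions as the regular functions on a larger torus, to which Lemma~\ref{RegularFunctions} applies directly.

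First I would unwind the action. Since $\tilde{\sigma}(\gamma)f=f\circ\sigma(\gamma)^{-1}$, the generator $r$ acts by $(\tilde{\sigma}_\alpha(r)f)(z)=f(z-\alpha)$, so $f$ is invariant precisely when $f(z+\alpha)=f(z)$. In other words, on top of being $\Lambda$-periodic (as a function on $T=\CC/\Lambda$), an invariant $f$ is exactly a function that is in addition $\alpha$-periodic. Next I would identify the relevant domain of holomorphy. A fixed lift of $\alpha$ to $\CC$ satisfies $N\alpha\in\Lambda$, because $\alpha$ is an $N$-torsion point; hence $\LL_{(\alpha)}=\ZZ+\ZZ\alpha+\ZZ\tau$ is a genuine lattice containing $\Lambda$ with $\LL_{(\alpha)}/\Lambda\cong\ZZ/N\ZZ$, and the preimage in $\CC$ of the orbit $\{0,\alpha,\dots,(N-1)\alpha\}$ is exactly $\LL_{(\alpha)}$. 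Thus $\mathcal{O}_{\mathbb{T}}$ consists of the $\Lambda$-periodic meromorphic functions on $\CC$ holomorphic off $\LL_{(\alpha)}$, and the invariant subalgebra consists of those which are moreover $\alpha$-periodic, i.e. $\LL_{(\alpha)}$-periodic.

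Finally I would pass to the quotient torus. An $\LL_{(\alpha)}$-periodic meromorphic function on $\CC$ holomorphic off $\LL_{(\alpha)}$ is precisely a meromorphic function on $T_{(\alpha)}=\CC/\LL_{(\alpha)}$ that is holomorphic away from the single point $\{0\}$, which is the common image of the whole orbit under the covering $T\to T_{(\alpha)}$. Applying Lemma~\ref{RegularFunctions} to the torus $T_{(\alpha)}$ then yields $\mathcal{O}_{T_{(\alpha)}\setminus\{0\}}=\CC[\wp_{\LL_{(\alpha)}},\wp_{\LL_{(\alpha)}}']$, which is the asserted identity.

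The only step requiring genuine care is the bookkeeping of poles in the passage to the quotient: one must verify that the $N$ punctures of $\mathbb{T}$ collapse to the single puncture of $T_{(\alpha)}\setminus\{0\}$, and that holomorphy is preserved in both directions of the correspondence, so that the $\alpha$-periodicity does not force (nor permit) any spurious poles beyond $\LL_{(\alpha)}$. I expect this to be the main, though essentially routine, obstacle; once the lattice index and the orbit-to-point correspondence are pinned down, the identification with $\mathcal{O}_{T_{(\alpha)}\setminus\{0\}}$ and the appeal to Lemma~\ref{RegularFunctions} are immediate.
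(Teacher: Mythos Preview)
Your proposal is correct and follows essentially the same approach as the paper: identify $C_N$-invariance as $\Lambda_{(\alpha)}$-periodicity, recognise the invariant algebra as the regular functions on the quotient torus $T_{(\alpha)}$ with a single puncture, and invoke Lemma~\ref{RegularFunctions}. The paper's proof is simply a terser version of yours, omitting the bookkeeping you (rightly) flag as routine.
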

\begin{proof}
We know that $\mathcal{O}^{\tilde{\sigma}_\alpha(C_N)}_{\mathbb{T}}$ is the space of $\Lambda_{(\alpha)}$-periodic meromorphic functions which are holomorphic on $T\setminus \Lambda_{(\alpha)}$. With this perspective, $\mathcal{O}_{\mathbb{T}}^{\tilde{\sigma}_{\alpha}(C_N)}=\mathcal{O}_{\mathbb{T}/\sigma_{\alpha}(C_N)}$. Now use Lemma \ref{RegularFunctions} with $T$ taken to be the complex torus $T/\sigma_{\alpha}(C_N)$. Thus $\mathcal{O}_{\mathbb{T}}^{\tilde{\sigma}_{\alpha}(C_N)}=\mathbb{C}[\wp_{\Lambda_{(\alpha)}}, \wp_{\Lambda_{(\alpha)}}'].$ \end{proof}
We will now construct meromorphic functions on complex tori that will play a fundamental role throughout, especially in the construction of normal forms. Given a meromorphic function $f$ on $T$, the \emph{divisor} $(f)$ of $f$ is defined as the following formal $\ZZ$-linear combination of points $p\in T$, $(f)=\sum_{p\in T}\mathrm{ord}_p(f)(p)$, where $\mathrm{ord}_p(f)$ is the order of a zero or pole of $f$ at $p$ (if $p$ is neither a zero nor a pole, $\mathrm{ord}_p(f)=0$).
Let $\{\chi_0,\dots, \chi_{N-1}\}$ denote the set of characters of the cyclic group $C_N$. Embed this group as $\sigma(C_N)\subset \A(T)$, where $\sigma(r)z=z+\alpha.$ Let $\twp=\wp-\wp(\alpha)$.
Let $\pi_{\chi_j}$ denote the projection $\mathcal{O}_{\mathbb{T}}\rightarrow \mathcal{O}_{\mathbb{T}}^{\chi_j}$ given by $$\pi_{\chi_j}=\frac{1}{N}\sum_{r\in C_N}\overline{\chi_j(r)}\tilde{\sigma}(r).$$ We shall sometimes drop the notation that includes $\sigma$ and only write $\gamma f$ for a group action of $\Gamma$ on some $f\in \ot$. However, when this notation causes potential confusion, we opt for readability and write a dot. Consider the function $\wp'/\twp$. By \eqref{wpRel} and the fact that $\wp'$ is odd, we have $(\wp')=-3(0)+(1/2)+(\tau/2)+((1+\tau)/2)$. If we assume $N\geq 3$, then $\alpha$ cannot be a half lattice point and thus $\alpha\neq -\alpha$. The divisor of $\wp'/\twp$ is then given by \begin{align}\label{divisor}
(\wp'/\twp)=-(0)-(\alpha)-(-\alpha)+(1/2)+(\tau/2)+((1+\tau)/2),
\end{align} and thus we see that $\wp'/\twp\in \ot$. If $N=2$, then one can verify that $\wp'/\twp$ has poles in $\{0,\alpha\}$ and thus again $\wp'/\twp\in \ot$.

We define $P_j:T\rightarrow \CC$ to be the projection of $\wp'/\twp$ under $\pi_{\chi_j}$, multiplied by $N$: \begin{align}
P_j=N\pi_{\chi_j}\left(\frac{\wp'}{\twp}\right)=\sum_{k=0}^{N-1}\frac{r^k\wp'}{\omega_N^{kj}r^k\twp}.\end{align}
Notice that indeed $r\cdot P_j=\chi_j(r)P_j$. We would like to stress that $P_j$ depends both on a choice of a homomorphism $\sigma:C_N\rightarrow \A(T)$ and the character $\chi_j$ of $C_N$, while the notation only shows $j$ dependence. To avoid any confusion, we could write $P_{\sigma, \chi_j}$ instead - now risking an overload of notation however. \\

The next lemma states how $P_j$ transforms under the action of $s:z\mapsto -z$. It is a simple consequence of the fact that $\wp'/\twp$ is odd and that if $r$ is a translation of the torus, then $sr^k=r^{-k}s$.  \begin{Lemma}\label{action of s}
Consider the action on $T$ given by $s:z\mapsto -z$. Then $s$ acts on $P_j$ as $sP_j=-P_{-j}.$
\end{Lemma}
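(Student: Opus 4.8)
The plan is to compute $sP_j$ directly from the defining formula $P_j=\sum_{k=0}^{N-1}\frac{r^k\wp'}{\omega_N^{kj}\,r^k\twp}$ by applying the action of $s$ and then re-indexing the sum. The two ingredients I would assemble first are the parity of $\wp'/\twp$ and the commutation relation $sr^k=r^{-k}s$. For the parity, recall that $\wp$ is even and $\wp'$ is odd, so $\twp=\wp-\wp(\alpha)$ is even and hence $\wp'/\twp$ is odd; in the notation where $s$ acts on functions by $\tilde\sigma(s)g(z)=g(s^{-1}z)=g(-z)$, this reads $s\cdot(\wp'/\twp)=-\wp'/\twp$. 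For the commutation relation, since $r$ is the translation $z\mapsto z+\alpha$ and $s:z\mapsto -z$, one checks $s r^k = r^{-k} s$ as automorphisms of $T$, so the same holds for the induced action on $\ot$.

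The computation itself is then a short manipulation. First I would apply $s$ term by term:
\begin{align*}
sP_j=\sum_{k=0}^{N-1}\frac{1}{\omega_N^{kj}}\cdot\frac{s r^k\wp'}{s r^k\twp}=\sum_{k=0}^{N-1}\frac{1}{\omega_N^{kj}}\cdot\frac{r^{-k}(s\wp')}{r^{-k}(s\twp)}=\sum_{k=0}^{N-1}\frac{1}{\omega_N^{kj}}\cdot r^{-k}\!\left(\frac{s\wp'}{s\twp}\right),
\end{align*}
where in the middle step I used $s r^k=r^{-k}s$ and in the last step the fact that each $r^{-k}$ acts as an algebra automorphism, so it commutes with taking quotients. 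Now I substitute the parity relation $s(\wp'/\twp)=-\wp'/\twp$ to pull out a global minus sign, and then re-index by setting $m=-k$ (equivalently $k\mapsto N-k$, working modulo $N$ in the exponent $\omega_N^{kj}$), which converts $\omega_N^{-kj}=\omega_N^{mj}$ back into exactly the defining sum for $P_{-j}$. This yields $sP_j=-\sum_{m}\frac{r^m\wp'}{\omega_N^{m(-j)}\,r^m\twp}=-P_{-j}$.

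I do not anticipate a genuine obstacle here, as the statement is an indexing identity rather than a substantive theorem; the only points requiring care are bookkeeping. Specifically, I would be careful that the minus sign from oddness is genuinely global (it is, since it multiplies every term), and that the re-indexing $k\mapsto -k$ is a bijection of $\{0,\dots,N-1\}$ modulo $N$ that correctly sends the character $\chi_j$ to $\chi_{-j}$ (reading subscripts of $P_{-j}$ modulo $N$, consistent with the earlier convention $G_{-m}$, $P_{N-j}=P_{-j}$). I would also verify silently that the intermediate expressions make sense as elements of $\ot$, which is guaranteed since $\wp'/\twp\in\ot$ was established from the divisor computation in \eqref{divisor}, and $\ot$ is stable under the $\Gamma$-action. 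With these checks, the identity $sP_j=-P_{-j}$ follows immediately.
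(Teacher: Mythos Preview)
Your argument is correct and is exactly the approach the paper takes: the paper merely remarks that the identity is a simple consequence of $\wp'/\twp$ being odd together with the relation $sr^k=r^{-k}s$, and your computation spells this out in detail.
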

We will now list some basic properties of the functions $P_j$. Recall that if a function $f$ has a Laurent expansion $f(z)=\sum_{n\in \ZZ}c_n(z-z_0)^n$ about $z=z_0$, then the \emph{residue} of $f$ at $z=z_0$ is given by the coefficient $c_{-1}$. Notation: $\mathrm{Res}_{z=z_0}(f)=c_{-1}.$

\begin{Proposition}\label{prop:Expansion Vj}
Let $\alpha$ be an $N$-torsion point with $N\geq 2$, and let $\mathcal{S}=\{0,\alpha,2\alpha,\dots,(N-1)\alpha\}$. Then 
\begin{enumerate}
\item $P_j$ is a meromorphic function on $T$ for all $j$, with at most order 1 poles in $\mathcal{S}$. 
\item If $P_j$ has a pole, then it has poles everywhere in $\mathcal{S}$. 
\item The residue of $P_j$ about $z=0$ is given by
$$\mathrm{Res}_{z=0}(P_j)=-2+\omega_N^{-j}+\omega_N^{j}.$$
\end{enumerate} 
In particular $P_j$ is constant if and only if $j\equiv0 \mod N$. In fact, $P_{0}=0$.
\end{Proposition}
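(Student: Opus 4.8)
The crux of all three parts is the observation that $\wp'/\twp$ is a \emph{logarithmic derivative}: since $\twp=\wp-\wp(\alpha)$ satisfies $\twp'=\wp'$, we have $\wp'/\twp=\twp'/\twp=(\log\twp)'$. Consequently, at every point $p\in T$ the residue $\mathrm{Res}_{z=p}(\wp'/\twp)$ equals the order $\mathrm{ord}_p(\twp)$, and in particular every pole of $\wp'/\twp$ is simple (even where $\twp$ has a higher-order zero). The divisor of $\twp$ is $-2(0)+(\alpha)+(-\alpha)$ when $N\ge 3$ (a double pole at $0$, simple zeros at the distinct points $\pm\alpha$), and $-2(0)+2(\alpha)$ when $N=2$ (where $\alpha=-\alpha$ is a half-lattice point and $\twp$ acquires a double zero). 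Hence the residues of $\wp'/\twp$ are $-2$ at $0$ and $+1$ at each of $\pm\alpha$ when $N\ge 3$, and $-2$ at $0$, $+2$ at $\alpha$ when $N=2$.

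For (1) and (2), I would write $g=\wp'/\twp$ so that $P_j=\sum_{k=0}^{N-1}\omega_N^{-kj}\,(r^k g)$ with $(r^k g)(z)=g(z-k\alpha)$. Since the poles of $g$ lie only at the arguments $0,\pm\alpha$, the translate $g(z-k\alpha)$ has simple poles only at $z\in\{k\alpha,(k\pm1)\alpha\}\subset\mathcal{S}$. Summing, $P_j$ is meromorphic on $T$ with at worst simple poles, all contained in $\mathcal{S}$; this is (1). For (2) I would invoke the equivariance $r\cdot P_j=\chi_j(r)P_j$, that is $P_j(z-\alpha)=\omega_N^{j}P_j(z)$: translation by $\alpha$ scales $P_j$ by the nonzero constant $\omega_N^{j}$, so the pole set of $P_j$ is invariant under $z\mapsto z+\alpha$; as $\alpha$ generates $\mathcal{S}$, a pole at one point of $\mathcal{S}$ forces poles at every point of $\mathcal{S}$.

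For (3) I would extract the residue at $z=0$ by collecting exactly those terms $\omega_N^{-kj}g(z-k\alpha)$ whose translate is singular there, namely $k\equiv 0,1,-1\pmod N$. The $k=0$ term contributes $1\cdot(-2)$; the $k=1$ term contributes $\omega_N^{-j}\cdot\mathrm{Res}_{z=-\alpha}(g)=\omega_N^{-j}$; and the $k=N-1$ term contributes $\omega_N^{-(N-1)j}\cdot\mathrm{Res}_{z=\alpha}(g)=\omega_N^{j}$, using $\omega_N^{-Nj}=1$. Summing gives $\mathrm{Res}_{z=0}(P_j)=-2+\omega_N^{-j}+\omega_N^{j}$. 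The one genuine subtlety, and the step I would be most careful about, is $N=2$: there the two side-contributions collapse onto the single point $\alpha=-\alpha$, but $\mathrm{Res}_{z=\alpha}(g)=+2$ and $\omega_2^{-j}=\omega_2^{j}$, so the same formula $-2+\omega_2^{-j}+\omega_2^{j}$ emerges. This degenerate bookkeeping is precisely what the log-derivative viewpoint handles cleanly.

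Finally, the ``in particular'' claims follow by combining the three parts. Since all poles are simple by (1), $P_j$ has a pole at $0$ if and only if $\mathrm{Res}_{z=0}(P_j)\neq 0$; by (2) this is equivalent to $P_j$ having any pole at all, hence to $P_j$ being non-constant, because a pole-free meromorphic function on the compact torus $T$ is constant. Now $\mathrm{Res}_{z=0}(P_j)=-2+2\cos(2\pi j/N)$ vanishes exactly when $j\equiv 0\pmod N$, which gives the constancy criterion. For the last assertion, constancy of $P_0$ from this criterion together with Lemma \ref{action of s}, which gives $sP_0=-P_{-0}=-P_0$ and hence $P_0(-z)=-P_0(z)$, forces the constant value to be odd, so $P_0=0$.
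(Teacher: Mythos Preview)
Your proof is correct, and it takes a genuinely different route from the paper's. The paper argues part~(1) via the explicit divisor $(\wp'/\twp)$ (treating $N\ge 3$ and $N=2$ separately) and obtains part~(3) by computing the first terms of the Laurent expansions of $v$, $rv$, $r^{-1}v$ at $z=0$ directly. For the constancy of $P_0$ the paper descends $P_0$ to the quotient torus $T/\langle r\rangle$ and invokes the fact that a torus admits no meromorphic function with a single simple pole. Your approach replaces all of this with the single observation that $\wp'/\twp=(\log\twp)'$ is a logarithmic derivative, so its poles are automatically simple and its residues equal $\mathrm{ord}_p(\twp)$. This handles the $N=2$ degeneration (double zero of $\twp$ at the half-lattice point) and the residue computation uniformly without any Laurent expansions, and it lets you deduce the constancy criterion directly from part~(3) (simple pole at $0$ iff nonzero residue there) rather than via the descent argument. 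The trade-off is that the paper's expansions are self-contained, whereas yours leans on the standard fact about residues of $f'/f$; in return your argument is shorter, treats $N=2$ and $N\ge 3$ on equal footing, and makes the ``if and only if'' in the constancy claim transparent.
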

\begin{proof}
Let $v=\frac{\wp'}{\twp}$ and first take $N\geq 3$. Assume $r$ is given by $r:z\mapsto z+\alpha$. To prove the first item, note that $(r^k\tilde{\wp}) = -2(k\alpha)+ ((k+1)\alpha)+((k-1)\alpha)$ and hence $$(1/r^k\twp)=-((k-1)\alpha)-((k+1)\alpha)+2(k\alpha).$$ Using the divisor in \eqref{divisor}, we see that $r^kv=r^k\left(\frac{\wp'}{\twp}\right)$ has order 1 poles at $(k-1)\alpha$, $k\alpha$ and $(k+1)\alpha$, which are distinct when $N\geq 3$. Therefore $P_j=\sum_{k=0}^{N-1}\frac{r^k\wp'}{\omega_N^{kj}r^k\twp}$ has at most order 1 poles in $\mathcal{S}=\{0,\alpha,2\alpha,\dots,(N-1)\alpha\}$.\\ 

To prove the second statement, we observe that the divisor $(P_j)$ is invariant under $r$, since $r$ acts by a nonzero scalar on $P_j$. Hence if $P_j$ has a pole, then it has poles of order 1 everywhere in $\mathcal{S}$.\\

For the third item we consider a number of expansions about $z=0$.
The summands of $P_j$ that are responsible for the potential occurrence of pole at $z=0$ are $r^{-1}v, v$ and $rv$. It follows from the definition of $\wp$ in \eqref{DefWp} that about $z=0$, we have $$\frac{1}{\wp(z)-\wp(\alpha)}=z^2+O(z^4),\quad \wp'(z)=-\frac{2}{z^3}+O(z).$$ Thus \begin{align}\label{Expansion:v}
v(z)=\frac{\wp'}{\twp}(z)= -\frac{2}{z}+O(z).
\end{align} For $N\geq 3$ we have
$r\twp(z)=-\wp'(\alpha)z+O(z^2).$
Observe that $\wp'(\alpha)\neq 0$ as soon as $N\geq 3$, because zeros of $\wp'$ appear at half lattice points (i.e. 2-torsion points).
We see
$\frac{1}{r\twp}(z)= -\frac{1}{\wp'(\alpha)}\frac{1}{z}+O(1)$ and $r\twp'(z)=-\wp'(\alpha)+O(z)$ 
from which we infer $$rv(z)=\frac{1}{z}+O(1).$$ Similarly, one argues that $r^{-1}v(z)=\frac{1}{z}+O(1).$ Thus indeed $$\mathrm{Res}_{z=0}(P_j)=\mathrm{Res}_{z=0}\left(\sum_{k=0}^{N-1}\omega_N^{-kj}r^kv\right)=-2+\omega_N^{-j}+\omega_N^{j}.$$
If $N=2$, then $P_{1}=\frac{\wp'}{\twp}-r\frac{\wp'}{\twp}$ and it is easy to see that it has order 1 poles in $\{0,\alpha\}$. The expansion in \eqref{Expansion:v} still holds, thus $\frac{\wp'}{\twp}(z)=-\frac{2}{z}+O(z)$ and one can show that $r\frac{\wp'}{\twp}(z)=\frac{2}{z}+O(1)$ about $z=0$.\\

Now consider $P_j$ for $j\equiv 0\mod N$. Observe that it is invariant under $r$. Thus $P_{0}$ descends to a function $\tilde{P}_0$ on the torus $\tilde{T}=T/\langle r\rangle$. If $P_{0}$ has poles, they are of order 1 and if this is the case, then $\tilde{P}_0$ would have a simple pole on $\tilde{T}$. This is not possible due to the well known fact that if a compact Riemann surface has a meromorphic function with a single simple pole, it must be isomorphic to the Riemann sphere, cf. \cite[Proposition II.4.11]{miranda1995algebraic}. Hence $P_{0}$ is constant.

Finally, $sP_{0}=-P_{0}$ from which it follows that $P_{0}=0$. This concludes the proof. 
 \end{proof}
In Section \ref{MainResults}, we will introduce intertwining maps which will be used in the construction of bases of aLias. These maps are constructed via the $P_j$'s. As a consequence, the bases will be formulated in terms of the functions $P_j$. These functions are also important in another way: they generate the ring $\ot$ over the complex numbers, as we will prove now.
\begin{Proposition}\label{prop:Hauptmodul}
Let $r\in \A(T)$ be defined by $r(z)=z+\alpha$ with $\alpha$ an $N$-torsion point, where $N\geq 3$. Define $P_j$ with respect to this $r$. Then $$\ot=\CC[P_{1},P_{2}, \ldots, P_{N-1}],$$ where $\TT=T\setminus \langle r\rangle\cdot \{0\}.$ Furthermore, for any integer $j\not\equiv 0\mod N$, we have $$\CC[\wp_{\LL_{(\alpha)}}]=\CC[P_{-j}P_j].$$
Formulated differently, $P_{-j}P_j$ is a generator for the algebra of even meromorphic functions on the torus $T'=\CC/\LL_{(\alpha)}$, holomorphic on $T'\setminus\{0\}$. 
\end{Proposition}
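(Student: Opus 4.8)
The plan is to exploit the eigenspace decomposition of $\ot$ under the cyclic action. Since $r(z)=z+\alpha$ permutes the orbit $\mathcal{S}=\{0,\alpha,\dots,(N-1)\alpha\}$, the operator $\tilde\sigma(r)$ is an order-$N$ automorphism of $\ot$, so $\ot=\bigoplus_{j=0}^{N-1}V_j$ with $V_j:=\ot^{\chi_j}=\{f\in\ot:r\cdot f=\omega_N^{j}f\}$. Set $R:=\CC[P_1,\dots,P_{N-1}]$. Each $P_j$ lies in $\ot$ by Proposition \ref{prop:Expansion Vj}, so $R\subseteq\ot$, and a monomial $P_{j_1}\cdots P_{j_m}$ (with each $j_i\not\equiv0$) lies in $V_{j_1+\cdots+j_m}$ because $r$ scales it by $\omega_N^{\sum j_i}$. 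The key quantitative input is item (3) of Proposition \ref{prop:Expansion Vj}: near $z=0$ one has $P_{j_i}(z)=c_{j_i}z^{-1}+O(1)$ with $c_{j_i}=-2+\omega_N^{-j_i}+\omega_N^{j_i}\neq0$ for $j_i\not\equiv0$. Hence $P_{j_1}\cdots P_{j_m}(z)=(\prod_i c_{j_i})z^{-m}+O(z^{-m+1})$ has a pole of order \emph{exactly} $m$ at $0$, and since its divisor is $r$-invariant, of order $m$ at every point of $\mathcal{S}$. Thus $R$ contains, inside each $V_j$, explicit functions of controlled pole order. The goal reduces to proving $V_j\subseteq R$ for every $j$.

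First I would establish this by downward induction on the (common) pole order $d$ of an element $f\in V_j$ at the points of $\mathcal{S}$. If $d=0$ then $f$ is holomorphic on $T$, hence constant, so $f\in\CC\subseteq R$ when $j\equiv0$ and $f=0$ otherwise. If $d\geq1$, it suffices to produce $g\in R\cap V_j$ of pole order exactly $d$: rescaling $g$ to match the $z^{-d}$-coefficient of $f$, the difference $f-g\in V_j$ has strictly smaller pole order at $0$, hence (being isotypic) at all of $\mathcal{S}$, and the induction closes. So the crux is that $R\cap V_j$ realizes precisely the pole orders occurring in $V_j$. For $j\not\equiv0$ every $d\geq1$ occurs and is realized: take $P_j$ for $d=1$, and for $d\geq2$ solve $j\equiv j_1+\cdots+j_d\pmod N$ with all $j_i\in\{1,\dots,N-1\}$, which is possible for $N\geq3$. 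For $j\equiv0$ the products give every order $d\geq2$ (e.g. $P_1P_{N-1}$ for $d=2$, padded by further pairs $(1,N-1)$ and, when $d$ is odd, one triple $(1,1,N-2)$), and constants give $d=0$; the only potentially missing order is $d=1$, but a $C_N$-invariant $f\in V_0$ descends to a function on the quotient torus with a single simple pole, which is impossible on a genus-$1$ surface by \cite[Proposition II.4.11]{miranda1995algebraic}. Thus no element of $V_0$ has pole order $1$, and the realizable orders of $R\cap V_j$ match those of $V_j$ in every case. Summing over $j$ gives $R=\ot$.

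For the second statement, note that $P_{-j}P_j\in V_0=\ot^{C_N}$, and by Lemma \ref{action of s} the reflection $s:z\mapsto -z$ sends $P_j\mapsto -P_{-j}$, so $s\cdot(P_{-j}P_j)=(-P_j)(-P_{-j})=P_{-j}P_j$; that is, $P_{-j}P_j$ is an even invariant. By Lemma \ref{lem:CN invariants}, $\ot^{C_N}=\CC[\wp_{\LL_{(\alpha)}},\wp_{\LL_{(\alpha)}}']$, whose even part (killing all $\wp_{\LL_{(\alpha)}}'$-terms) is exactly $\CC[\wp_{\LL_{(\alpha)}}]$. The computation above shows $P_{-j}P_j$ has a pole of order exactly $2$ with leading coefficient $c_{-j}c_j=c_j^{2}\neq0$, and as an even function on $T'=\CC/\LL_{(\alpha)}$ with a single double pole it must equal $a\,\wp_{\LL_{(\alpha)}}+b$ with $a=c_j^{2}\neq0$. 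Since $a\neq0$ this is invertible as a change of generator, giving $\CC[P_{-j}P_j]=\CC[\wp_{\LL_{(\alpha)}}]$.

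The main obstacle I anticipate is the interplay between pole-order realizability and the missing order $1$: everything hinges on the residues $c_j$ being nonzero (so products do not drop pole order) together with the genus-$1$ obstruction that removes order $1$ from the invariant component, making the realizable orders of $R$ coincide exactly with those of $\ot$. The combinatorial bookkeeping of which residues $j$ are attainable by $m$-fold sums from $\{1,\dots,N-1\}$ is elementary but is the only place where the hypothesis $N\geq3$ is genuinely used, and it should be stated carefully rather than glossed over.
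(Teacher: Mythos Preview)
Your argument is correct and closely parallels the paper's proof. Both rest on the same two facts: the residue computation $c_j=-2+\omega_N^{j}+\omega_N^{-j}\neq0$ guarantees that a product $P_{j_1}\cdots P_{j_m}$ has pole order exactly $m$ at every point of $\mathcal S$, and the genus-$1$ obstruction eliminates order~$1$ in the invariant component. The difference is organisational: the paper invokes Riemann--Roch to see $\dim_{\CC}L(mD)=mN$ and then fills the filtration $L((m-1)D)^{\chi_k}\subset L(mD)^{\chi_k}$ one eigenspace at a time with explicit products, whereas you run a direct pole-order descent inside each $V_j$ without ever quoting Riemann--Roch. Your route is slightly more elementary and makes the role of the nonvanishing leading coefficients more transparent; the paper's route gives for free the dimension of each $L(mD)^{\chi_k}$ and hence a concrete basis. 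For the second claim the two arguments are essentially identical: both identify $P_{-j}P_j$ as an even $\langle r\rangle$-invariant with a genuine double pole on $T'=\CC/\LL_{(\alpha)}$, forcing it to be $a\,\wp_{\LL_{(\alpha)}}+b$ with $a\neq0$.
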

\begin{proof}
 The space of meromorphic functions on a complex torus $T$ is denoted by $\mathcal{M}(T)$. Let $D$ be a divisor on $T$. The degree $\deg(D)$ of $D=\sum_{p\in T}n_p(p)$ is defined as $\deg(D)=\sum_{p\in T}n_p.$ Introduce the space of meromorphic functions on $T$ with poles bounded by $D$, denoted by $L(D)$: $$L(D)=\{f\in \mathcal{M}(T): \mathrm{div}(f)\geq -D\}.$$
Note that $L(D)$ is a complex vector space. The Riemann-Roch theorem, specialised to genus 1 Riemann surfaces \cite[Proposition V.3.14]{miranda1995algebraic}, says that if $D$ is a divisor on a complex torus $T$ and $\deg(D)>0$, then $\dim_{\CC}L(D)=\deg(D)$. Let $D=\sum_{j=0}^{N-1}(j\alpha).$ We see that $\dim_{\CC}L(D)=\deg(D)=N$. By construction, we have $P_j\in L(D)^{\chi_j}$, and hence $\dim_{\CC}L(D)^{\chi_j}= 1$ if $j\not\equiv 0\mod N$. Therefore, $L(D)=\CC\langle 1,P_{1},\ldots, P_{N-1}\rangle.$ We proceed inductively. For each $k\in \{1,\ldots, N-1\}$, going from $L((m-1)D)^{\chi_k}$ to $L(mD)^{\chi_k}$, we add the 1-dimensional subspace generated by a product of $P_j$'s such that $0\neq \prod_{j\in \{j_1,\ldots, j_m\}}P_j\in L(mD)^{\chi_k}$ to $L((m-1)D)^{\chi_k}$. In particular, for $k\not\equiv 0\mod N/2$ and letting $Q=(P_{-j}P_j)^{\lfloor \frac{m-1}{2}\rfloor}$ (where $\lfloor x\rfloor$ denotes the integer part of $x$), we claim that $$L(mD)^{\chi_k}/L((m-1)D)^{\chi_k}=\begin{cases} \CC P_kQ+L((m-1)D)^{\chi_k},\quad \text{if}\,\, m\,\, \text{odd}\\
 \CC P_{-k}P_{2k}Q+L((m-1)D)^{\chi_k},\quad \text{if}\,\, m\,\, \text{even}.
 \end{cases}$$ 
 Here we used that $N\geq 3$, so that there are enough integers $k$ such that $P_{2k}\neq 0$. For $k\equiv 0\mod N/2$ and $m$ even, we need to change $P_{-k}P_{2k}$ to $P_{-k-n}P_{2k+n}\in\ot^{\chi_k}$ for some suitable $n\in\ZZ$. This shows that $\ot=\bigcup_{j\in \NN}L(jD)$ is generated as a $\CC$-algebra by $P_{1},\ldots, P_{N-1}$ whenever $N\geq 3$.\\
To prove the second claim, notice that $r P_{-j}P_j=P_{-j}P_j$ and that it has poles of order 2 in $\LL_{(\alpha)}$ if and only if $j\not\equiv 0\mod N$. If $z_0$ is a zero of $P_{-j}$, then $-z_0$ is a zero of $P_j$ since $P_{-j}(z_0)=-P_j(-z_0)$ by Lemma \ref{action of s}. Hence the divisor of $P_{-j}P_j$ equals $(P_{-j}P_j)=-2(0)+(z_0)+(-z_0)$. Now recall that $\wp_{\LL_{(\alpha)}}$ has an order 2 pole at $z=0$ and is even, hence $(\wp_{\LL_{(\alpha)}})=-2(0)+(w_0)+(-w_0)$ for some $w_0\in T'$. We see that indeed $P_{-j}P_j=c_1\wp_{\LL_{(\alpha)}}+c_2$ for some $c_1,c_2\in \CC$ and $c_1\neq 0$ if and only if $j\not\equiv 0\mod N$. Therefore $\CC[\wp_{\LL_{(\alpha)}}]=\CC[P_{-j}P_j]$ if and only if $j\not\equiv 0\mod N$.
\end{proof}
The following identities related to the $P_j$'s are recorded in the next lemma. These will play an important role in the construction of intertwiners in Section \ref{MainResults}.
\begin{Lemma}\label{bNonZero}
Let $N\in \NN$. For every $j\in \ZZ$ and $k\not\equiv 0\mod N$, there are $\lambda,\mu\in \CC$ such that $$P_{2j}P_{-j}^2-P_{-2j}P_j^2=\lambda P_{-k}P_{k}+\mu.$$
If in addition $N\geq 3$, $k=\pm j$ and $j\not \equiv 0 \mod N/2$, then $\mu$ is nonzero.

\end{Lemma}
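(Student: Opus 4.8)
The plan is to pin down the left-hand side $L:=P_{2j}P_{-j}^2-P_{-2j}P_j^2$ through its behaviour under the translation $r\colon z\mapsto z+\alpha$ and the inversion $s\colon z\mapsto -z$, thereby reducing the identity to a statement about even functions on the quotient torus $T'=\CC/\LL_{(\alpha)}$. First I would record the characters: since $r\cdot P_m=\omega_N^m P_m$, both monomials $P_{2j}P_{-j}^2$ and $P_{-2j}P_j^2$ carry the trivial character (indeed $\omega_N^{2j}\omega_N^{-2j}=1$), so $L$ is $r$-invariant and descends to a meromorphic function on $T'$. Using $sP_m=-P_{-m}$ from Lemma \ref{action of s} one checks $s\cdot L=L$, so $L$ is even. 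Because each $P_m$ has poles only in $\mathcal{S}$ (Proposition \ref{prop:Expansion Vj}), and $\mathcal{S}$ collapses to the single point $0\in T'$, the descended function is holomorphic on $T'\setminus\{0\}$; being even, it therefore lies in $\CC[\wp_{\LL_{(\alpha)}}]$ by the description of even functions in Lemma \ref{RegularFunctions}.

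Next I would bound the pole order at $0$. From Proposition \ref{prop:Expansion Vj} we have $P_m(z)=c_m z^{-1}+O(1)$ with $c_m=-2+\omega_N^m+\omega_N^{-m}$ and $c_{-m}=c_m$, so the two cubic leading terms $c_{2j}c_{-j}^2z^{-3}$ and $c_{-2j}c_j^2z^{-3}$ coincide and cancel in $L$; since $L$ is even its Laurent expansion at $0$ contains only even powers, hence its pole there has order at most $2$. Thus $L\in\CC\langle 1,\wp_{\LL_{(\alpha)}}\rangle$. By Proposition \ref{prop:Hauptmodul}, for any $k\not\equiv 0\bmod N$ one has $P_{-k}P_k=c_1\wp_{\LL_{(\alpha)}}+c_2$ with $c_1\neq 0$; inverting this affine relation yields $L=\lambda P_{-k}P_k+\mu$ for suitable $\lambda,\mu\in\CC$, which is the first assertion.

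For the nonvanishing of $\mu$ I would take $k=j$ (the case $k=-j$ is identical) and argue by contradiction. The hypothesis $j\not\equiv 0\bmod N/2$ is equivalent to $2j\not\equiv 0\bmod N$, so by Proposition \ref{prop:Expansion Vj} the functions $P_{\pm 2j}$ are nonconstant with nonzero residue $c_{2j}$; it also forces $j\not\equiv 0\bmod N$, so $\chi_j$ is nontrivial. Suppose $\mu=0$, i.e.\ $L=\lambda P_{-j}P_j$. Pick a zero $z_0$ of $P_{-j}$; since $\mathrm{div}(P_{-j}P_j)=-2(0)+(z_0)+(-z_0)$ has simple zeros (Proposition \ref{prop:Hauptmodul}) we have $P_j(z_0)\neq 0$ and the zero of $P_{-j}$ at $z_0$ is simple; by equivariance $P_{-j}$ vanishes simply at every point of the $r$-orbit $\{z_0+k\alpha\}$, and these are all of its $N$ zeros. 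Evaluating $L=\lambda P_{-j}P_j$ at $z_0$ gives $0=-P_{-2j}(z_0)P_j(z_0)^2$, whence $P_{-2j}(z_0)=0$; by $r$-equivariance $P_{-2j}$ then vanishes at every (simple) zero of $P_{-j}$. Consequently $P_{-2j}/P_{-j}$ has no poles---its simple poles along $\mathcal{S}$ cancel and the simple zeros of the denominator are absorbed---so it is constant, contradicting the fact that it transforms by the nontrivial character $\chi_{-j}$. Hence $\mu\neq 0$.

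The step I expect to be the main obstacle is the claim that the zeros of $P_{-j}$ are simple and disjoint from those of $P_j$. This can fail exactly when the common zero-value $-c_2/c_1$ of $\wp_{\LL_{(\alpha)}}$ is a branch value, i.e.\ when the zeros of $P_{-j}P_j$ sit at $2$-torsion points of $T'$; then $P_{-j}P_j$ has a double zero, $P_{-j}$ and $P_j$ vanish simultaneously there, and the evaluation at $z_0$ becomes vacuous. To close this degenerate case I would revert to the purely local criterion that $\mu\neq 0$ holds if and only if the coefficient vectors $([z^{-2}],[z^0])$ of $L$ and of $P_{-j}P_j$ at $0$ are linearly independent, and verify it from the expansions $P_m=c_m z^{-1}+d_m+e_m z+\cdots$ (with $d_{-m}=-d_m$ and $e_{-m}=e_m$ forced by $sP_m=-P_{-m}$). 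This is a finite residue computation rather than a conceptual difficulty, but it is where the real work lies.
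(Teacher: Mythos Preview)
Your argument is correct and follows the same route as the paper: identify $L$ as even and $r$-invariant with at most a double pole on the quotient torus, then invoke Proposition~\ref{prop:Hauptmodul}; for $\mu\neq 0$, assume $\mu=0$ and derive that two of the $P_m$ are proportional, contradicting that they live in distinct character spaces.

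The only point where you diverge is the ``main obstacle'' you flag, and there the residue computation is unnecessary. Observe first that the zero set of each nonconstant $P_m$ is a \emph{single} $r$-orbit of $N$ simple zeros: $P_m$ has $N$ simple poles, hence $N$ zeros with multiplicity, and since $r$ acts freely every $r$-orbit has size $N$. Now if your degenerate case occurs, i.e.\ $P_{-j}$ and $P_j$ share a zero $z_0$, then their zero sets (each a single $r$-orbit) coincide, so $(P_j)=(P_{-j})$ and $P_j/P_{-j}$ is a nonzero constant. But this ratio transforms by the character $\chi_{2j}$, which is nontrivial precisely because $j\not\equiv 0\bmod N/2$ --- contradiction. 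This is exactly how the paper disposes of the issue in one line (``$(P_j)=(P_{-j})$ or $(P_j)=(P_{2j})$, hence $P_j$ is a constant multiple of $P_{-j}$ or $P_{2j}$; absurd''), and it closes the argument without any local expansion.
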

\begin{proof}
Let $j\in \ZZ$ and write $f=P_{2j}P_{-j}^2-P_{-2j}P_j^2$. Clearly $\gamma f=f$ for all $\gamma\in \langle s,r\rangle\cong D_N$ for any $N\in \NN$. In particular, $f$ is even and hence has at most order 2 poles, for any choice of $j$. By Proposition \ref{prop:Hauptmodul}, we see that for all $k\not\equiv 0\mod N$ we have that $f=\lambda P_{-k}P_{k}+\mu$ for some $\lambda,\mu\in \CC$. 

Assume now that $N\geq 3$, $k=\pm j$ and $j\not \equiv 0 \mod N/2$. 
If $\mu$ would be zero and $z_0\in T$ is a zero of $P_j$, then $f(z_0)=P_{2j}(z_0)P_{-j}^2(z_0)=0.$ 
Thus $z_0$ would also be a zero of $P_{-j}$ or $P_{2j}$. However, since the set of zeros of $P_{k}$ is invariant under $r$, the set of zeros of $P_j$ must then coincide with the zero set of $P_{-j}$ or $P_{2j}$. Since the order and location of the poles of $P_j$, $P_{-j}$ and $P_{2j}$ already coincide, this means that $(P_j)=(P_{-j})$ or $(P_j)=(P_{2j})$, that is, $P_j$ is either a constant multiple of $P_{-j}$ or $P_{2j}$. This is absurd, which therefore means that $\mu$ cannot be zero. 
\end{proof}
\begin{Remark}
One could find explicit formulas for $\lambda$ and $\mu$ using Proposition \ref{prop:Expansion Vj} and more detailed expansions. However, the obtained formulas are unlikely to benefit the transparency of the normal form we will later obtain. Related constructions will be carried out for $\Gamma=C_2\times C_2$ in Section \ref{subsectionC2xC2}, and here it does seem appropriate to keep track of the constants, mainly because there is simply one group instead of a collection of groups, and the constants have transparent formulas in terms of well known functions.
\end{Remark}

\section{Construction of normal forms}\label{MainResults}
In Section \ref{Setup}, we have seen a basic example of an aLia on a complex torus with symmetry group $C_2$, embedded inside $\A(T)$ as $z\mapsto -z$, and developed a number of functional aspects of the theory. We have observed that this $C_2$-aLia has a particularly transparent structure - it is generated over its algebra of invariants and its Cartan subalgebra has eigenvalues in $\CC$ (in fact, in this case it is a constant matrix). We will see that this is a general feature of aLias on complex tori with base Lie algebra $\mathfrak{sl}_2$. 

In this section we shall give explicit realisations of aLias and determine their isomorphism type. It turns out that if the symmetry group acts on a complex torus without fixed points, then we obtain aLias that are isomorphic to a current algebra. Formulated differently: they are untwisted. If on the other end the group acts with fixed points, the corresponding aLias are twisted. The twisted aLias turn out to be isomorphic to either the Onsager algebra $\mathfrak{O}$ or $\mf{S}_{\tau}$.\\
 
 We will bring the aLias into a \emph{normal form}, which will allow us to decide the isomorphism type of the aLias in question. The normal form will be defined in Definition \ref{def:NormalForm}. As a consequence, we settle the question whether the aLias are hereditary \cite{knibbeler2020hereditary}, as defined in Definition \ref{def:Hereditary}.
 
Our main tools in constructing bases for aLias, are certain $SL_2(\CC)$-valued maps on a complex torus $\TT=T\setminus \Gamma\cdot \{0\}$ whose composition with $\Ad:SL_2(\CC)\rightarrow \A(\mf{sl}_2)$ will intertwine the actions of $\Gamma$ on $T$ and $\mf{sl}_2$.
\begin{Definition}[Hereditary aLias\,\,\cite{knibbeler2020hereditary}]\label{def:Hereditary}
An aLia is said to be \emph{hereditary} if it contains a \emph{constant spectrum Cartan subalgebra (CSA)}. This is a Cartan subalgebra which is generated by an element $H$ with constant $\mathrm{ad}(H)$-eigenvalues.
\end{Definition}
\begin{Definition}[Normal form of aLias]\label{def:NormalForm}
We say that an aLia $\mathfrak{A}=(\mathfrak{sl}_2\otimes_{\CC}\mathcal{O}_{\mathbb{T}})^{\Gamma}$ is in \emph{normal form} if $$\mathfrak{A}=\CC\langle E,F,H \rangle \otimes_{\CC} \mathcal{O}_{\mathbb{T}}^{\Gamma}$$ such that $$[H,E]=2E,\quad [H,F]=-2F,\quad [E,F]=H\otimes p,$$
where $p\in \mathcal{O}_{\mathbb{T}}^{\Gamma}$.
\end{Definition}
We remark that for the normal form as in Definition \ref{def:NormalForm}, we automatically have that $H$ has eigenvalues in $\CC$. The CSA $\mf{h}=\CC H\otimes \mathcal{O}_{\mathbb{T}}^{\Gamma}$ is a constant spectrum CSA.\\

In the upcoming sections, we will construct normal forms for all aLias on complex tori with base Lie algebra $\mathfrak{sl}_2$, in our classification. This will therefore also establish the property of being hereditary of this class of aLias. \\

Let the symmetry group be given by $\Gamma=G\ltimes K\subset \A_0(T)\ltimes t(T)$, where we recall the notation $t(T)$ for the normal subgroup of translations of the torus $T$.
 We start with a lemma that says that, under certain circumstances, a normal form for the aLia with symmetry group $K$ automatically determines a normal form for $\Gamma$ as well. \begin{Lemma}\label{Nnormal-form}
Let $\Gamma=G\ltimes K\subset \A_0(T)\ltimes t(T)$. If $\mf{A}=(\mf{sl}_2\otimes_{\CC}\mathcal{O}_{\mathbb{T}})^{K}$ has a normal form, and $H\in \mf{A}$ is $G$-invariant, such that it generates a CSA of $\mf{A}$, then $(\mf{sl}_2\otimes_{\CC}\mathcal{O}_{\mathbb{T}})^{\Gamma}$ has a normal form as well.
\end{Lemma}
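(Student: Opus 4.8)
The plan is to leverage Lemma \ref{NormalSubgroupAlia}, which gives $(\mf{sl}_2\otimes_{\CC}\ot)^{\Gamma}=((\mf{sl}_2\otimes_{\CC}\ot)^{K})^{\Gamma/K}$, and then apply the averaging/Reynolds operator of the quotient group $G\cong\Gamma/K$ to the normal form of $\mf{A}=(\mf{sl}_2\otimes_{\CC}\ot)^{K}$. Suppose $\mf{A}=\CC\langle E,F,H\rangle\otimes_{\CC}\ot^{K}$ is in normal form, so $[H,E]=2E$, $[H,F]=-2F$, and $[E,F]=H\otimes p$ with $p\in\ot^{K}$. Since $G$ normalises $K$, there is a well-defined action of $G$ on $\mf{A}$, and by hypothesis $H$ is $G$-invariant and generates a CSA of $\mf{A}$. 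First I would observe that because $H$ is fixed by $G$ and $G$ acts by Lie algebra automorphisms, $G$ preserves the root space decomposition of $\mf{A}$ with respect to $\ad(H)$: the $+2$ eigenspace $\CC E\otimes\ot^{K}$ and the $-2$ eigenspace $\CC F\otimes\ot^{K}$ are each $G$-stable (the eigenvalues $\pm2$ are intrinsic and $G$ commutes with $\ad(H)$).

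The heart of the argument is then to produce $G$-invariant generators. I would set $\hat E=\langle E\rangle_{G}$ and $\hat F=\langle F\rangle_{G}$, the averages of $E$ and $F$ over $G$ — but this naive average may vanish, so more care is needed. A cleaner route is to use that each eigenspace $\CC E\otimes\ot^{K}$ is a free rank-one module over $\ot^{K}$, on which $G$ acts semilinearly over the ring automorphisms of $\ot^{K}$. The $G$-invariants of such a module form a free rank-one module over $\ot^{G}=(\ot^{K})^{G}$, by a descent/Galois-type argument (or directly, since the averaging projection onto the $\chi_0$-isotypical component is surjective and the relevant isotypical component is one-dimensional over the invariant ring in each degree, as in the explicit decompositions of Lemma \ref{IsotypicalComponents}). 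This yields generators $E'=E\otimes a$ and $F'=F\otimes b$ with $a,b\in\ot$ chosen so that $E',F'$ are $G$-invariant and still generate the respective eigenspaces as $\ot^{\Gamma}$-modules. Because $H$ is already $G$-invariant, the triple $\{E',F',H\}$ lies in $(\mf{sl}_2\otimes_{\CC}\ot)^{\Gamma}$.

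It then remains to verify the normal-form relations for $\{E',F',H\}$. The bracket $[H,E']=2E'$ and $[H,F']=-2F'$ are immediate since $a,b$ are scalars in $\ot$ and the bracket is $\ot$-linear in the function slot. For the last relation, $[E',F']=[E\otimes a,F\otimes b]=H\otimes(p\,ab)$, and since $E',F',H\in\mf{A}^{G}$ the product $p\,ab$ must lie in $\ot^{\Gamma}$, so setting $p'=p\,ab\in\ot^{\Gamma}$ gives exactly the relation $[E',F']=H\otimes p'$ required by Definition \ref{def:NormalForm}. Finally I would confirm that $(\mf{sl}_2\otimes_{\CC}\ot)^{\Gamma}=\CC\langle E',F',H\rangle\otimes_{\CC}\ot^{\Gamma}$ as a whole, which follows by applying $\langle\cdot\rangle_{G}$ to a general element $E\otimes f + F\otimes g + H\otimes h\in\mf{A}$ and using that the $G$-invariants of each eigenspace are exactly $E'\otimes\ot^{\Gamma}$, $F'\otimes\ot^{\Gamma}$, $H\otimes\ot^{\Gamma}$.

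The main obstacle I anticipate is the descent step: showing that the $G$-invariants of each one-dimensional (over $\ot^{K}$) eigenspace module are genuinely free of rank one over $\ot^{\Gamma}$ and are generated by a \emph{single} simple tensor $E\otimes a$ rather than a $\CC$-linear combination spread across the module. This is where one must use that $\CC E\otimes\ot^{K}$ is a rank-one free module and that $G$ acts through a character on the $\mf{sl}_2$-factor (as $\ad(g)E=\chi(g)E$ for some character $\chi$ of the finite cyclic-type group $G$, since $E$ spans a $G$-stable line), so that $G$-invariance of $E\otimes a$ reduces to the condition $\chi(g)\,(g\cdot a)=a$ — i.e. $a$ lies in a specific isotypical component $\ot^{\chi^{-1}}$, which is exactly a rank-one free $\ot^{G}$-module by Lemma \ref{IsotypicalComponents}. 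Verifying that such a nonzero $a$ (and the analogous $b$) exists and that $p\,ab$ is not forced to be zero is the crux; everything else is bookkeeping with the $\ot$-bilinearity of the bracket.
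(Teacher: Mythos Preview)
Your outline matches the paper's architecture: reduce to $\mf{A}^{G}$ via Lemma~\ref{NormalSubgroupAlia}, use $G$-invariance of $H$ to see that $G$ preserves the $\pm 2$ eigenspaces $E\cdot\ot^{K}$ and $F\cdot\ot^{K}$ of $\ad(H)$, and then pick generators from the correct isotypical components. You also correctly name the crux of the argument.

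The gap is in how you resolve that crux. From $G$-stability of the $\ot^{K}$-module $E\cdot\ot^{K}$ you only get $g\cdot E=E\cdot k_g$ for some $k_g\in\ot^{K}$; your assertion that $g\cdot E=\chi(g)E$ with $\chi(g)\in\CC^{*}$ ``since $E$ spans a $G$-stable line'' is precisely what has to be shown --- the $\CC$-line $\CC E$ is not a priori $G$-stable, only the $\ot^{K}$-line is, and a semilinear action on a rank-one module need not reduce to a character. The paper closes this gap with a unit argument. Because $K\subset t(T)$, one has $\ot^{K}=\CC[\wp,\wp']$ (Lemma~\ref{lem:CN invariants}) and the $K$-aLia is a genuine current algebra, so the normal form can be taken with $p=1$, i.e.\ $[E,F]=H$. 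Applying $g$ to this bracket yields $[E\cdot k_1,F\cdot k_2]=H$, hence $k_1k_2=1$ in $\CC[\wp,\wp']$; the only units in this ring are nonzero constants, so $k_1,k_2\in\CC^{*}$, and finiteness of $G=C_\ell$ forces them to be $\ell$-th roots of unity. Once $g\cdot E=\omega_\ell^{j}E$ is established, your appeal to Lemma~\ref{IsotypicalComponents} and the remaining bracket computations go through exactly as you describe. Note that the paper tacitly uses $p=1$ in this step; if you keep a general $p$ as you do, the unit argument needs to be replaced (for instance by observing that $g^{\ell}=\id$ forces the norm $\prod_{i=0}^{\ell-1} g^{i}\cdot k_g=1$, whence $k_g$ has no zeros on $\TT/K$ and is therefore constant).
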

\begin{proof}
First recall that $G=C_{\ell}\subset \A_0(T)$ and that $K$ consists of translations of $T$. By Lemma \ref{lem:CN invariants}, we therefore have $\mathcal{O}_{\mathbb{T}}^K=\CC[\wp,\wp']$ for some $\wp$ associated to a suitable lattice. Observe that the statement is trivially true for $\ell=1$, hence we take $\ell\in \{2,3,4,6\}$.\\

 Assume the normal form of $(\mf{sl}_2\otimes_{\CC}\mathcal{O}_{\mathbb{T}})^{K}$ to be $$(\mf{sl}_2\otimes_{\CC}\mathcal{O}_{\mathbb{T}})^{K}=\CC\langle E,F,H\rangle \otimes_{\CC}\mathcal{O}_{\mathbb{T}}^K=\CC\langle E,F,H\rangle \otimes_{\CC}\CC[\wp,\wp']\cong \mf{sl}_2\otimes_{\CC}\mathcal{O}_{\mathbb{T}/K},$$ with $\ot^K$-linear brackets $[H,E]=2E$, $[H,F]=-2F$ and $[E,F]=H$. 
 Now, since $K$ is normal in $\Gamma$, $g\in G$ acts on $\CC\langle E,F,H\rangle \otimes_{\CC}\mathcal{O}_{\mathbb{T}}^K$ and $\{g\cdot E,g\cdot F,g\cdot H\}$ forms again a standard $\mf{sl}_2$-triple, where the action is defined in \eqref{DiagonalAction}. By assumption, $g\cdot H=H$. Furthermore, $[H,g\cdot E]=2g\cdot E$, so $g\cdot E=E\otimes k_1$ for some function $k_1\in \mathcal{O}_{\mathbb{T}}^K$. Similarly, $g\cdot F=F\otimes k_2$ for some $k_2\in \mathcal{O}_{\mathbb{T}}^K$.  We have $[g\cdot E,g\cdot F]=H$ and this implies that $k_2=k_1^{-1}$. The only units in $\mathcal{O}_{\mathbb{T}}^K=\CC[\wp,\wp']$ are the nonzero constants. Therefore $g\cdot E=\omega_{\ell}^jE$ and $g\cdot F=\omega_{\ell}^{-j}F$, for some $j\in \{1,\ldots, \ell-1\}$ such that $\gcd(j,\ell)=1$. Notice that this forces $j\in \{1,\ell-1\}$, due to the limited amount of choices for $\ell$. \\
 
 We now decompose $\CC\langle E,F,H\rangle \otimes_{\CC}\CC[\wp,\wp']$ into its $C_{\ell}$ isotypical components: $$(\CC\langle E,F,H\rangle \otimes_{\CC}\CC[\wp,\wp'])^{G}=\CC\langle E\otimes p_1,F\otimes p_2,H\rangle\otimes_{\CC} \CC[J],$$
where $p_1\in \CC[\wp,\wp']^{\chi_{\ell-j}}$, $p_2\in \CC[\wp,\wp']^{\chi_j}$ and $J$ generates $\mathcal{O}_{\mathbb{T}}^{\Gamma}$, which follows from Lemma \ref{IsotypicalComponents}. 

Its Lie structure is given by $$[H,E\otimes p_1]=2E\otimes p_1,\quad [H,F\otimes p_2]=-2F\otimes p_2,\quad [E\otimes p_1,F\otimes p_2]=H\otimes p_1p_2,$$
where $p_1p_2\in \CC[J]$.
We may assume that $g\cdot E=\omega_{\ell}E$ and $g\cdot F=\omega_{\ell}^{-1}F$ after applying an automorphism that interchanges the isotypical components if necessary (e.g. $\mathrm{Ad}\begin{pmatrix}
0 & 1\\
-1 &0
\end{pmatrix}$) and assume therefore that $p_1\in \CC[\wp,\wp']^{\chi_{\ell-1}}$, $p_2\in \CC[\wp,\wp']^{\chi_{1}}$. 

\end{proof}
Observe that Lemma \ref{Nnormal-form} implies that in order to understand aLias with nonabelian symmetry groups $\Gamma=G\ltimes K$, it is sufficient to understand the aLias with abelian symmetry groups $G$ and $K$ separately, granted that we have found a $K$-invariant element $H$ in a CSA, which is simultaneously $G$-invariant.

The proof of Lemma \ref{Nnormal-form} gives us a construction to obtain the normal for the aLia with symmetry group $\Gamma$ as well, if the condition on a generator of the CSA is being satisfied. \\

The next proposition gives a criterion for the existence of normal forms of aLias and extends Lemma \ref{Nnormal-form} by providing a normal form of the full group $\Gamma=G\ltimes K$.
\begin{Proposition}\label{Mauto}
Let $\Gamma=G\ltimes K\subset \A_0(T)\ltimes t(T)$ and let $\rho:\Gamma\rightarrow \A(\mf{sl}_2)$ be a monomorphism and  $\tilde{\rho}:\Gamma\rightarrow \A(\mf{sl}_2)$ a homomorphism whose kernel equals $K$. If there exists an inner automorphism $\Psi\in \A_{\mathcal{O}_{\mathbb{T}}}(\mf{sl}_2\otimes_{\CC}\mathcal{O}_{\mathbb{T}})$ such that for all $z\in \TT$ and $\gamma\in \Gamma$, we have 
\begin{align}\label{Mintertwiner}
\Psi(\gamma\cdot z)=\rho(\gamma)\Psi(z)\tilde{\rho}(\gamma)^{-1},
\end{align}
then

\begin{enumerate}
\item $\mf{K}:=(\mf{sl}_2\otimes_{\CC}\mathcal{O}_{\mathbb{T}})^{K}\cong \mf{sl}_2\otimes_{\CC} \mathcal{O}_{\mathbb{T}}^{K},$
\item  There exists a normal form for $\mathfrak{A}=(\mf{sl}_2\otimes_{\CC}\mathcal{O}_{\mathbb{T}})^{\Gamma}$ and if the normal form of $\mf{K}$ is given by $\CC\langle E,F,H \rangle \otimes_{\CC} \mathcal{O}_{\mathbb{T}}^{K}$, with its Lie structure inherited from $\mf{sl}_2$, then the normal form for $\mathfrak{A}$ given by $$(\mf{sl}_2\otimes_{\CC}\mathcal{O}_{\mathbb{T}})^{\Gamma}=\CC\langle E\otimes p_1,F\otimes p_2,H \rangle \otimes_{\CC} \mathcal{O}_{\mathbb{T}}^{\Gamma},$$ with Lie structure $$[H,E\otimes p_1]=2E\otimes p_1,\quad [H,F\otimes p_2]=-2F\otimes p_2,\quad [E\otimes p_1,F\otimes p_2]=H\otimes p_1p_2,$$
where $p_1$ generates $\mathcal{O}_{\mathbb{T}}^{\chi_{\ell-1}}$, where $|G|=\ell$, $p_2$ generates $\mathcal{O}_{\mathbb{T}}^{\chi_{1}}$ and $p_1p_2\in \mathcal{O}_{\mathbb{T}}^{\Gamma}$.
\end{enumerate}
\end{Proposition}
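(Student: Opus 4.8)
The plan is to use the intertwiner $\Psi$ to transport the twisted action $\tilde\rho\otimes\tilde\sigma$ into an untwisted one, thereby trivialising the $K$-twist and reducing everything to the already-understood picture. Concretely, I would define the $\mathcal{O}_{\mathbb{T}}$-linear Lie algebra automorphism $\widehat\Psi$ of $\mf{sl}_2\otimes_{\CC}\mathcal{O}_{\mathbb{T}}$ by $\widehat\Psi(A\otimes f)(z)=\Psi(z)A\Psi(z)^{-1}\otimes f$, i.e.\ pointwise conjugation by the $\SL_2(\CC)$-valued map $\Psi$ composed with $\Ad$. The defining relation \eqref{Mintertwiner}, namely $\Psi(\gamma\cdot z)=\rho(\gamma)\Psi(z)\tilde\rho(\gamma)^{-1}$, is exactly the cocycle identity needed to guarantee that $\widehat\Psi$ intertwines the action $\tilde\rho\otimes\tilde\sigma$ with the action $\rho\otimes\tilde\sigma$: a short computation on simple tensors shows $\widehat\Psi\circ(\tilde\rho\otimes\tilde\sigma)(\gamma)=(\rho\otimes\tilde\sigma)(\gamma)\circ\widehat\Psi$. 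Since $\ker\tilde\rho=K$, restricting to $K$ makes the target action of $K$ on $\mf{sl}_2$ trivial, so $\widehat\Psi$ carries $(\mf{sl}_2\otimes_{\CC}\mathcal{O}_{\mathbb{T}})^{K}$ isomorphically onto $(\mf{sl}_2\otimes_{\CC}\mathcal{O}_{\mathbb{T}})^{\mathbf 1\otimes\tilde\sigma(K)}=\mf{sl}_2\otimes_{\CC}\mathcal{O}_{\mathbb{T}}^{\tilde\sigma(K)}=\mf{sl}_2\otimes_{\CC}\mathcal{O}_{\mathbb{T}}^{K}$, which is item~(1).

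For item~(2) I would then extend the analysis from $K$ to the full group $\Gamma=G\ltimes K$. The isomorphism $\widehat\Psi$ from (1) identifies $\mf{K}=(\mf{sl}_2\otimes_{\CC}\mathcal{O}_{\mathbb{T}})^{K}$ with $\mf{sl}_2\otimes_{\CC}\mathcal{O}_{\mathbb{T}}^{K}$, and under this identification the residual action of $G\cong\Gamma/K$ on $\mf{K}$ becomes the diagonal action $\rho(g)\otimes\tilde\sigma(g)$ on $\mf{sl}_2\otimes_{\CC}\mathcal{O}_{\mathbb{T}}^{K}$, because on the quotient $\rho$ and $\tilde\rho$ agree (both descend from $\Gamma/K$ and $\Psi$ conjugates one into the other). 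Writing the normal form of $\mf{K}$ as $\CC\langle E,F,H\rangle\otimes_{\CC}\mathcal{O}_{\mathbb{T}}^{K}$ with $[E,F]=H$, the element $H$ is $G$-invariant (it spans the $\rho(G)$-fixed line in the Cartan direction, since $\rho$ restricted to $G=C_{\ell}\subset \A_0(T)$ fixes a Cartan subalgebra), and $\{E,F\}$ are permuted up to scalars. This is precisely the hypothesis of Lemma~\ref{Nnormal-form}, whose proof I would invoke verbatim: the residual $G$-action sends $E\mapsto\omega_\ell^{\pm1}E$ and $F\mapsto\omega_\ell^{\mp1}F$, and taking $G$-isotypical components via Lemma~\ref{IsotypicalComponents} yields the generators $E\otimes p_1$, $F\otimes p_2$, $H$ with $p_1\in\mathcal{O}_{\mathbb{T}}^{\chi_{\ell-1}}$, $p_2\in\mathcal{O}_{\mathbb{T}}^{\chi_1}$, and $p_1p_2\in\mathcal{O}_{\mathbb{T}}^{\Gamma}$. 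The stated brackets then follow by $\mathcal{O}_{\mathbb{T}}^{\Gamma}$-linearity from the $\mf{sl}_2$ relations.

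The main obstacle, and the step I would be most careful about, is verifying that $\widehat\Psi$ genuinely descends to the claimed isomorphism on the level of invariant subalgebras rather than merely on the ambient current algebra; this hinges on checking the intertwining identity $\widehat\Psi\circ(\tilde\rho\otimes\tilde\sigma)=(\rho\otimes\tilde\sigma)\circ\widehat\Psi$ from \eqref{Mintertwiner} with the indices and inverses placed correctly, so that an element is $\tilde\rho\otimes\tilde\sigma$-invariant exactly when its image is $\rho\otimes\tilde\sigma$-invariant (compare the intertwining argument in Lemma~\ref{lem:IsomALias}). A secondary subtlety is confirming that the $G$-action after transport really is the \emph{untwisted} diagonal action and fixes $H$: one must use that $G\subset\A_0(T)$ forces $\rho(G)$ to preserve a common Cartan subalgebra so that a single $H$ can be chosen $G$-invariant, which is what makes the hypotheses of Lemma~\ref{Nnormal-form} applicable. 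Once these intertwining bookkeeping points are settled, the passage to isotypical components and the extraction of the normal-form brackets are routine.
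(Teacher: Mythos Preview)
Your approach is essentially the same as the paper's: both use the intertwining relation \eqref{Mintertwiner} to transport the $\rho$-twisted diagonal action to the $\tilde\rho$-twisted one (the computation $\gamma\cdot\Psi(z)x=\Psi(z)\tilde\rho(\gamma)x$ in the paper is precisely your intertwining identity), thereby trivialising the $K$-part and giving $\mf{K}\cong\mf{sl}_2\otimes_{\CC}\mathcal{O}_{\mathbb{T}}^{K}$, and then appeal to Lemma~\ref{Nnormal-form} for the residual cyclic $G$-action.

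One point to straighten out: the transported $G$-action on $\mf{sl}_2\otimes_{\CC}\mathcal{O}_{\mathbb{T}}^{K}$ is $\tilde\rho\otimes\tilde\sigma$, not $\rho\otimes\tilde\sigma$, and $\rho$ does \emph{not} descend to $\Gamma/K$ (it is injective on all of $\Gamma$, so your claim that ``on the quotient $\rho$ and $\tilde\rho$ agree'' is false). The $G$-invariance of $H$ therefore has nothing to do with $\rho(G)$: it follows because $\tilde\rho|_G$, being a representation of the cyclic group $G=C_\ell$ in $\A(\mf{sl}_2)$, fixes a Cartan element $X_1\in\mf{sl}_2$, and then $H=\Psi(X_1\otimes 1)$ satisfies $g\cdot H=\Psi(\tilde\rho(g)X_1\otimes 1)=H$. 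This is exactly how the paper argues, and once corrected your proof goes through.
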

\begin{proof}
First of all, we would like to stress that we can view $\Psi\in \A_{\mathcal{O}_{\mathbb{T}}}(\mf{sl}_2\otimes_{\CC}\mathcal{O}_{\mathbb{T}})$ as a holomorphic $\A(\mf{sl}_2)$-valued function on $\mathbb{T}$, in which case we use the notation $\Psi(z)$. We shall use both interpretations interchangeably and let the argument of $\Psi$ be either an element of $\mathbb{T}$ or an element of $\mf{sl}_2\otimes_{\CC}\ot$.\\

The first part claims that for $k\in K$, if we have $\Psi(k\cdot z)=\rho(k)\Psi(z)$, then there is a normal form for $\mf{K}$. Let $\{x_1,x_2,x_3\}$ be a basis for $\mf{sl}_2$. Then  for $j=1,2,3,$ we have that $X_j(z):=\Psi(z)x_j$ is $K$-invariant: $$k\cdot X_j(z)=k\cdot \Psi(z)x_j=\rho(k)\Psi(k^{-1}\cdot z)x_j=X_j(z).$$ Since $\Psi\in \A_{\mathcal{O}_{\mathbb{T}}}(\mf{sl}_2\otimes_{\CC}\mathcal{O}_{\mathbb{T}})$ is an inner automorphism, we have $$\CC\langle x_1,x_2,x_3\rangle\otimes_{\CC}\ot=\Psi(\CC\langle x_1,x_2,x_3\rangle\otimes_{\CC}\ot)=\CC\langle X_1,X_2,X_3\rangle\otimes_{\CC}\ot.$$ Taking the invariants, we obtain a normal form $$\mf{K}=\CC\langle X_1,X_2,X_3\rangle\otimes_{\CC}\ot^K\cong \mf{sl}_2\otimes_{\CC}\CC[\wp,\wp'],$$ with the brackets inherited from $\mf{sl}_2$ and where $\wp,\wp'$ generate $\ot^K$.\\

To prove the second item, we will argue as follows. Let $\gamma=gk\in \Gamma=G\ltimes K$ and $x\in \mf{sl}_2$. By \eqref{Mintertwiner}, $$\gamma\cdot \Psi(z)x=\Psi(z)\tilde{\rho}(\gamma)x.$$ Since $G$ is cyclic, it leaves a CSA of $\mf{sl}_2$ invariant. Hence there is a basis $\{X_1,X_2,X_3\}\subset \mf{sl}_2$, with $X_1$ a generator in the CSA, such that with respect to this basis, $\tilde{\rho}(\gamma)=\tilde{\rho}(gk)=\diag(1,\chi_1(g),\chi_{\ell-1}(g))$, where $\chi_j$ are the characters of $G$, $g$ generates $G$ and $k\in K$. Now, $\Psi(X_1\otimes 1)$ generates a CSA of $\mf{K}$ and $g\cdot \Psi(X_1\otimes 1)=\Psi(X_1\otimes 1)$ for all $g\in G$ by \eqref{Mintertwiner}. We can therefore apply Lemma \ref{Nnormal-form} to arrive at the conclusion of the existence of a normal form for $\mf{A}$ and the stated form of it.
\end{proof}

Recall that the groups $\Gamma=G\ltimes K\subset \A_0(T)\ltimes t(T)$ considered in our classification, are isomorphic to one of the following groups: $$1\ltimes C_N, \quad C_2\ltimes C_N,\quad C_{3}\ltimes (C_2\times C_2),\quad C_{\ell}\ltimes 1,$$
where $\ell\in \{1,2,3,4,6\}$ and $N\in \NN$.
The main object of our considerations for the rest of the paper is finding a suitable $\Psi\in \A_{\mathcal{O}_{\mathbb{T}}}(\mf{sl}_2\otimes_{\CC}\mathcal{O}_{\mathbb{T}})$ such that \eqref{Mintertwiner} holds. Section \ref{subsectionCNDN} is devoted to the cases $\Gamma=C_N$ and $\Gamma=D_N\cong C_2\ltimes C_N$ while Section \ref{subsectionC2xC2} concerns $\Gamma=C_2\times C_2$ and $\Gamma=A_4\cong C_3\ltimes (C_2\times C_2)$. 
\subsection{The cases $\Gamma=C_N$ and $\Gamma=D_N$}\label{subsectionCNDN}
We will start with the case $\Gamma=C_N$ where $C_N$ is embedded into $\A(T)$ as translations by the homomorphism $\sigma_{\alpha}:C_N\rightarrow \A(T)$, defined by $\sigma_{\alpha}(r)z=z+\alpha$ for some $N$-torsion point $\alpha\in T$. The main tool we use in the construction of aLias with symmetry group $C_N$, is the construction of map $\Phi_j:\TT\rightarrow SL_2(\CC)$ ($j\not\equiv0\mod N/2)$ for which the $\mathcal{O}_{\mathbb{T}}$-linear automorphism $\mathrm{Ad}(\Phi_j)$ will map a basis of $\mathfrak{sl}_2$ to a subalgebra of the current algebra $\mf{sl}_2\otimes_{\CC}\mathcal{O}_{\mathbb{T}}$. The automorphism $\mathrm{Ad}(\Phi_j)$ is designed such that it satisfies the equivariance condition \eqref{DefIntertwiner} for a given homomorphism $\rho:C_N\rightarrow \A(\mathfrak{sl}_2)$, which will imply that we obtain a basis of invariant matrices (with respect to the action induced by the homomorphism $\rho\otimes \tilde{\sigma})$. From here, one quickly obtains normal forms for aLias with symmetry group $C_N$.
Having established the relevant results, one easily obtains aLias with symmetry group $D_N$, since $D_N\cong C_2\ltimes C_N$. Due to Lemma \ref{lem:Klein} and \ref{lem:IsomALias}, we can choose any faithful action of $C_N$ on $\mathfrak{sl}_2$ without loss of generality. \\

A crucial step in our construction of normal forms relies on the existence of intertwining operators between the actions of a symmetry group $\Gamma$ on $\mathfrak{sl}_2$ and $T$. Let us briefly mention some general facts about these intertwiners to provide some motivation on the choice of these maps. Consider monomorphisms (that is, faithful homomorphisms) $\sigma:\Gamma\rightarrow \A(T)$ and $\delta:\Gamma\rightarrow SL_2(\CC)$.
We would like to construct a matrix-valued map $\Phi:T\rightarrow SL_2(\CC)$, 
\begin{align}\label{Pmatrix}
\Phi(z)=\begin{pmatrix}
p_1(z)& q_1(z)\\
p_2(z) & q_2(z)
\end{pmatrix},
\end{align} where $p_j,q_j\in \mathcal{O}_{\mathbb{T}}$, such that $\Phi(\sigma(\gamma)z)=\delta(\gamma)\Phi(z)$ for all $\gamma\in \Gamma$ and $z\in \mathbb{T}$. Taking the adjoint $\mathrm{Ad}$ of both sides, gives us the equivariance property with respect to the actions induced by $\sigma: \Gamma\rightarrow \A(T)$ and $\rho:\Gamma\rightarrow \A(\mf{sl}_2)$, with $\rho=\Ad\circ\delta$:
\begin{align}\label{DefIntertwiner}
\mathrm{Ad}(\Phi(\sigma(\gamma)z))=\rho(\gamma)\mathrm{Ad}(\Phi(z)).
\end{align}
Notice that $\Ad(\Phi)$ is an element of $\A_{\ot}(\mf{sl}_2\otimes_{\CC}\ot)$ - the group of $\ot$-linear automorphisms of $\mf{sl}_2\otimes_{\CC}\ot$.
The difficult part in constructing such an equivariant map, is designing it in such a way that it is invertible over $\CC$. We will present a procedure for the group $C_N$ below. The case of $C_2\times C_2$ will be dealt with in Section \ref{subsectionC2xC2}. As we will see later, it will be enough to consider these two cases in the construction of normal forms.\\

Take an element $\gamma\in \Gamma$ of order $N$ and let $\delta:\Gamma\rightarrow SL_2(\CC)$ and $\sigma:\Gamma\rightarrow \A(T)$ be monomorphisms. 
We call a vector $v=(v_1,v_2)^T$ consisting of functions $v_j\in \mathcal{O}_{\mathbb{T}}$ an \emph{equivariant vector} with respect to the actions defined by $\delta$ and $\sigma$, if $$v(\sigma(\gamma)z)=\delta(\gamma)v(z),\quad \text{for all}\,\,\gamma\in \Gamma,\,\,z\in \mathbb{T}.$$ The columns of $\Phi$ consist of equivariant vectors $p:=(p_1,p_2)^T$ and $q:=(q_1,q_2)^T$ as in \eqref{Pmatrix}. We will now fix a choice of $\delta$, without loss of generality as we will see later. Let $\delta=\delta_j:\Gamma\rightarrow SL_2(\CC)$ be the monomorphism defined by $\delta_j(\gamma)=\diag(\omega_N^j,\omega_N^{-j})$ for some $j$ coprime to $N$. The condition $\Phi(\sigma(\gamma)z)=\delta_j(\gamma)\Phi(z)$ implies that $p_1,q_1\in \mathcal{O}_{\mathbb{T}}^{\chi_{N-j}}$, and $p_2,q_2\in \mathcal{O}_{\mathbb{T}}^{\chi_{j}}$. \\

We will now present a construction for the matrix $\Phi$ as defined in \eqref{Pmatrix}, that will be used in the construction of normal forms of both $C_N$ and $D_N$. 

 Let $N\geq 3$ and fix some integer $j\not\equiv 0\mod N/2$. Take $\lambda,\mu\in \CC$ such that $P_{2j}P_{-j}^2-P_{-2j}P_j^2=\lambda P_{-j}P_j+\mu.$ Lemma \ref{bNonZero} guarantees that $\mu$ is nonzero. 
Let the columns of $\Phi$ be given by $p$ and $q$. In a sense, the simplest $C_N$-equivariant vector we can choose for $p$ is $(P_{-j}, P_j)^T$. 
Given this choice for $p$, for $q$  we need to find $q_1\in \mathcal{O}_{\mathbb{T}}^{\chi_{N-j}}$ and $q_2\in \mathcal{O}_{\mathbb{T}}^{\chi_{j}}$ such that $\det(\Phi)=P_{-j}q_2-P_jq_1=1$. Let $s$ be given by $s(z)=-z$ and recall that $s\cdot P_j=-P_{-j}$ (cf. Lemma \ref{action of s}). One finds $$p(s\cdot z)=-\begin{pmatrix}
0& 1\\
1 &0
\end{pmatrix} p(z).$$ 
Let $S=\begin{pmatrix}
0& 1\\
1 &0
\end{pmatrix}$. 
In light of the equivariance condition \eqref{Mintertwiner}, we will impose the condition $q(s\cdot z)=Sq(z),$ from which we get 
\begin{align}\label{DN-equivariance}
\Phi(s\cdot z)=S\Phi(z)\begin{pmatrix}
-1& 0\\
0 & 1
\end{pmatrix}.
\end{align}

Adding this condition to the equation $P_{-j}q_2-P_jq_1=1$, yields a unique solution in the space $L(2D)\subset \mathcal{O}_{\mathbb{T}}$, where $D$ is the divisor $D=\Gamma\cdot(0)=\sum_{j=0}^{N-1}(j\alpha)$. 
The solution to this equation in $L(2D)$ is provided by Lemma \ref{bNonZero}. By the proof of Proposition \ref{prop:Hauptmodul}, we deduce that for each integer $j\not\equiv 0\mod N/2$, we have the vector space decomposition $L(2D)^{\chi_j}=\CC P_j\oplus \CC P_{-j}P_{2j}$. Then, if we choose $$q_1=\frac{1}{\mu}P_jP_{-2j}+\frac{\lambda}{2\mu} P_{-j}\in \mathcal{O}_{\mathbb{T}}^{\chi_{N-j}},\quad q_2=\frac{1}{\mu}P_{-j}P_{2j}-\frac{\lambda}{2\mu} P_j\in \mathcal{O}_{\mathbb{T}}^{\chi_{j}},$$ we satisfy the requirement by Lemma \ref{bNonZero}, and this solution is unique by considering the vector space decompositions of $L(2D)^{\chi_j}$ and $L(2D)^{\chi_{N-j}}$. Thus the matrix $\Phi$ with the $p_j$ and $q_j$ defined above, is the unique matrix with lowest order poles satisfying the intertwining condition.
\\

We can summarise this as follows: given a complex torus $T$, define a faithful homomorphism $\sigma:C_N\rightarrow t(T)$. Set $\TT=T\setminus C_N\cdot \{0\}$ and consider the functions $P_j$ defined with respect to the choice of $\sigma$.
For every integer $j\not\equiv 0\mod N/2$, we associate to $\delta_j$ and $\sigma$ a meromorphic, $C_N$-equivariant map $\Phi_j:T\rightarrow SL_2(\CC)$, holomorphic on $\TT$, given by 
\begin{align}\label{intertwiner1}
\Phi_j(z)=\begin{pmatrix}
P_{-j}(z) & \frac{1}{\mu}P_j(z)P_{-2j}(z)+\frac{\lambda}{2\mu} P_{-j}(z)\\
P_j(z) &\frac{1}{\mu}P_{-j}(z)P_{2j}(z)-\frac{\lambda}{2\mu} P_j(z)
\end{pmatrix},
\end{align} 
where $\lambda$ and $\mu$ are constants defined in Lemma \ref{bNonZero}.
We will record our discussion below in two lemmata.
\begin{Lemma}\label{LemmaDeterminant}
For any integer $j\not\equiv 0\mod N/2$  
 it holds that $\det(\Phi_j)=1$. 
\end{Lemma}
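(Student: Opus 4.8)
The plan is to compute $\det(\Phi_j)$ directly from the explicit entries in \eqref{intertwiner1} and then substitute the defining identity of the constants $\lambda$ and $\mu$ furnished by Lemma \ref{bNonZero}. This is the natural route, since the second column of $\Phi_j$ was engineered precisely so that the determinant collapses to $1$; there is no deeper structure to invoke.

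First I would expand the $2\times 2$ determinant along the two columns, obtaining
\[
\det(\Phi_j)=P_{-j}\Bigl(\tfrac{1}{\mu}P_{-j}P_{2j}-\tfrac{\lambda}{2\mu}P_j\Bigr)-P_j\Bigl(\tfrac{1}{\mu}P_jP_{-2j}+\tfrac{\lambda}{2\mu}P_{-j}\Bigr).
\]
Upon expanding, the two cross-terms each contribute $-\tfrac{\lambda}{2\mu}P_{-j}P_j$ and add together, so that
\[
\det(\Phi_j)=\tfrac{1}{\mu}\bigl(P_{2j}P_{-j}^2-P_{-2j}P_j^2\bigr)-\tfrac{\lambda}{\mu}P_{-j}P_j.
\]

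The key step is then to apply Lemma \ref{bNonZero} with $k=j$: this is exactly the relation $P_{2j}P_{-j}^2-P_{-2j}P_j^2=\lambda P_{-j}P_j+\mu$ that defines $\lambda$ and $\mu$ in the construction of $\Phi_j$. Substituting it into the displayed expression gives
\[
\det(\Phi_j)=\tfrac{1}{\mu}\bigl(\lambda P_{-j}P_j+\mu\bigr)-\tfrac{\lambda}{\mu}P_{-j}P_j=1,
\]
as claimed. Note that the hypotheses $N\ge 3$, $k=\pm j$ and $j\not\equiv 0\bmod N/2$ of Lemma \ref{bNonZero} are in force by the standing assumptions under which $\Phi_j$ is built, so that $\mu\neq 0$ and the entries of $\Phi_j$ are well defined.

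Honestly, there is no real obstacle here: the only point worth checking is that $\mu\neq 0$, so that dividing by $\mu$ in the entries of $\Phi_j$ is legitimate and the final simplification is valid. This is precisely the content of the second part of Lemma \ref{bNonZero}, and the whole difficulty of the statement was already discharged by the earlier choice of $q_1$ and $q_2$; the present lemma merely records the payoff of that choice.
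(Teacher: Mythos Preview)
Your proof is correct and follows exactly the approach of the paper: the lemma is stated there as a record of the preceding construction, where $q_1$ and $q_2$ were chosen so that $P_{-j}q_2-P_jq_1=1$ by Lemma \ref{bNonZero} with $k=j$. You have simply written out the determinant expansion that the paper leaves implicit.
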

\begin{Lemma}\label{IntertwinerProperty}
Let $\sigma:C_N\rightarrow \A(T)$ be a monomorphism and $\delta_{j,N}:C_N\rightarrow SL_2(\CC)$ be defined by $\delta_{j,N}(r)=\diag(\omega_N^j,\omega_N^{-j})$. For all integers $j\not\equiv 0\mod N/2$, $z\in \mathbb{T}$ and $r\in C_N$, we have 
\begin{align}\label{eq:Intertwiner}
\Phi_j(\sigma(r) z)=\delta_{j,N}(r)\Phi_j(z).
\end{align} 
\end{Lemma}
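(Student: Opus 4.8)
The goal is to verify the intertwining relation
\begin{align*}
\Phi_j(\sigma(r)z)=\delta_{j,N}(r)\Phi_j(z)
\end{align*}
for the explicit matrix $\Phi_j$ in \eqref{intertwiner1}, where $\sigma(r)z=z+\alpha$ and $\delta_{j,N}(r)=\diag(\omega_N^j,\omega_N^{-j})$. Since $r$ generates $C_N$, it suffices to check the relation for the single generator $r$; the general case then follows by iterating, because both sides are multiplicative in $r$ (the left by the group action of $C_N$ on $\ot$, the right because $\delta_{j,N}$ is a homomorphism). So the plan is to reduce immediately to checking \eqref{eq:Intertwiner} for the generator.

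The heart of the matter is the transformation behaviour of the functions $P_k$ under $r$. By construction $r\cdot P_k=\chi_k(r)P_k=\omega_N^k P_k$, i.e. $P_k(\sigma(r)z)=P_k(z+\alpha)=(\tilde\sigma(r)^{-1}P_k)(z)$; being careful with the convention $\tilde\sigma(\gamma)f=f\circ\sigma(\gamma)^{-1}$, one has $P_k(\sigma(r)z)=\omega_N^{-k}P_k(z)$ (the precise sign of the exponent is exactly what must be pinned down from the definition $r\cdot P_j=\chi_j(r)P_j$ together with $\pi_{\chi_j}$, but it is a fixed global convention). First I would record this scaling law for each entry: the top-left entry $P_{-j}$ scales by $\omega_N^{j}$ (reading indices $-j,\,-2j,\,2j$ accordingly), the bottom-left $P_j$ by $\omega_N^{-j}$, and similarly for the two entries of the second column, which are $\ot$-linear combinations of $P_jP_{-2j}$, $P_{-j}$ and of $P_{-j}P_{2j}$, $P_j$.

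The main computation is then a column-by-column check. For the first column $p=(P_{-j},P_j)^T$ one gets directly $p(\sigma(r)z)=(\omega_N^{j}P_{-j}(z),\omega_N^{-j}P_j(z))^T=\delta_{j,N}(r)p(z)$. For the second column $q=(q_1,q_2)^T$ I must verify that each summand transforms with the correct weight: the product $P_jP_{-2j}$ carries character $\chi_{j}\chi_{-2j}=\chi_{-j}$, hence scales by $\omega_N^{j}$, matching the weight $\omega_N^{j}$ required of $q_1\in\ot^{\chi_{N-j}}$, and the additive term $\tfrac{\lambda}{2\mu}P_{-j}$ also lies in $\ot^{\chi_{N-j}}$ and scales the same way, so $q_1(\sigma(r)z)=\omega_N^{j}q_1(z)$; the symmetric check gives $q_2(\sigma(r)z)=\omega_N^{-j}q_2(z)$. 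Since $\lambda,\mu$ are constants (fixed by Lemma \ref{bNonZero}) they are unaffected by the action, which is the only place one must be slightly careful. Assembling the two columns yields $\Phi_j(\sigma(r)z)=\delta_{j,N}(r)\Phi_j(z)$, completing the proof.

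The only genuine obstacle is bookkeeping the character weights consistently: the entries live in prescribed isotypical components $\ot^{\chi_{N-j}}$ and $\ot^{\chi_j}$ by the very construction preceding \eqref{intertwiner1}, so once the scaling convention $r\cdot P_k=\omega_N^{k}P_k$ is fixed, each entry is forced to transform by the scalar dictated by its component, and the relation is essentially automatic. In other words, the proposition is really a restatement of the fact that the columns of $\Phi_j$ were built as equivariant vectors for $\delta_{j,N}$; the verification amounts to confirming that the explicit formula \eqref{intertwiner1} respects the eigenspace assignments, which it does by Proposition \ref{prop:Hauptmodul} and Lemma \ref{bNonZero}.
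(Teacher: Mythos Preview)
Your proposal is correct and follows essentially the same approach as the paper. The paper does not give a separate proof of this lemma at all: it simply states the lemma as a record of the preceding construction, in which the columns $p$ and $q$ of $\Phi_j$ were deliberately built with entries $p_1,q_1\in\mathcal{O}_{\mathbb{T}}^{\chi_{N-j}}$ and $p_2,q_2\in\mathcal{O}_{\mathbb{T}}^{\chi_{j}}$, so that the equivariance $\Phi_j(\sigma(r)z)=\delta_{j,N}(r)\Phi_j(z)$ holds by design; your entry-by-entry check of the character weights is precisely a verification of that design, and your observation that it suffices to treat the generator $r$ is the natural way to carry it out.
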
 
Define $\rho_{j,N}=\Ad\circ \delta_{j,N}$. Taking the adjoint of both sides of $\Phi_j(\sigma(r) z)=\delta_{j,N}(r)\Phi_j(z)$, we obtain $\Ad(\Phi_j(\sigma(r) z))=\rho_{j,N}(r)\Ad(\Phi_j(z))$. Notice that $\ker\rho_{j,N}\cong C_2$ if $N$ is even and $\gcd(j,N)=1$. 

We have that for any integer $j\not\equiv 0\mod N/2$, $\mathrm{Ad}(\Phi_j)\in \A_{\mathcal{O}_{\mathbb{T}}}(\mf{sl}_2\otimes_{\CC}\mathcal{O}_{\mathbb{T}})$ by Lemma \ref{LemmaDeterminant}. We can also consider $\mathrm{Ad}(\Phi_j)$ to be a meromorphic map from $T$ to $\A(\mf{sl}_2)$, $z\mapsto \mathrm{Ad}(\Phi_j(z))$ holomorphic on $\TT$. The map $\mathrm{Ad}(\Phi_j)$ intertwines the actions defined by $\sigma$ and $\rho$, or stated differently, it is equivariant with respect the action of $C_N$. 

\begin{Remark}\label{Rem:EqRep}
The matrix $\Phi(z):=\Phi_j(z)$ is constructed via a particular choice of $\rho$. If $\rho'$ is an equivalent representation of $C_N$, $\rho'(r)=B\rho(r) B^{-1}$ for all $r\in C_N$ for some $B\in \A(\mf{sl}_2)$, and we define $\Phi'=B\Phi B^{-1}$, then one can verify that $\rho'(r)\mathrm{Ad}(\Phi'(z))=\mathrm{Ad}(\Phi'(\sigma(r)z))$.
\end{Remark}
The next two theorems describe the aLias with a cyclic symmetry group. We split the results according to the genus $g$ of $T/\sigma(C_N)$. First, we consider $g(T/\sigma(C_N))=1$. 

\begin{Theorem}[Genus 1 case]\label{CNaLia}
Let $\rho:C_N\rightarrow \A(\mf{sl}_2)$ and $\sigma:C_N\rightarrow t(T)$ be monomorphisms. Then $$(\mathfrak{sl}_2\otimes_{\CC}\mathcal{O}_{\mathbb{T}})^{\rho\otimes \tilde{\sigma}(C_N)}\cong \mathfrak{sl}_2\otimes_{\CC}\ot^{\tilde{\sigma}(C_N)}.$$ A normal form is given by $$(\mathfrak{sl}_2\otimes_{\CC}\mathcal{O}_{\mathbb{T}})^{\rho\otimes \tilde{\sigma}(C_N)}=\CC\langle E,F,H\rangle\otimes_{\CC}\CC[\wp_{\LL},\wp_{\LL}'],$$ where $\LL$ is a suitable lattice, and with brackets $$[H,E]=2E,\quad [H,F]=-2F,\quad [E,F]=H,$$
where $E,F$ and $H$ are the images under $\Ad(\Phi_j)$, for some integer $j\not\equiv 0\mod N/2$, of $e\otimes 1$, $f\otimes 1$ and $h\otimes 1$, respectively.
\end{Theorem}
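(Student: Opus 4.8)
The plan is to obtain the isomorphism and the normal form simultaneously, by transporting the standard $\mf{sl}_2$-triple through the gauge transformation $\Ad(\Phi_j)$. First I would reduce to a convenient representation: by Lemma~\ref{lem:IsomALias} the isomorphism class of the aLia is unchanged when $\rho$ is replaced by a conjugate, by Lemma~\ref{lem:Klein} any two faithful copies of $C_N$ inside $\A(\mf{sl}_2)$ are conjugate, and Remark~\ref{Rem:EqRep} transports the intertwiner $\Phi_j$ along such a conjugation. Hence it suffices to treat $\rho=\rho_{j,N}=\Ad\circ\delta_{j,N}$ for a single admissible $j$ with $j\not\equiv 0\bmod N/2$ (which realizes a faithful $\rho$ when $N$ is odd). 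Fixing such a $j$, set $E=\Ad(\Phi_j)(e\otimes 1)$, $F=\Ad(\Phi_j)(f\otimes 1)$ and $H=\Ad(\Phi_j)(h\otimes 1)$.

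The core step is then purely formal. Since $\det\Phi_j=1$ by Lemma~\ref{LemmaDeterminant}, the map $\Ad(\Phi_j)$ is a genuine $\ot$-linear Lie algebra automorphism of $\mf{sl}_2\otimes_{\CC}\ot$; in particular it carries the $\ot$-basis $\{e,f,h\}\otimes 1$ to the $\ot$-basis $\{E,F,H\}$ and preserves brackets, so $[H,E]=2E$, $[H,F]=-2F$ and $[E,F]=H$ hold identically. The point to verify is that $E,F,H$ lie in the aLia. Writing $\Ad(\Phi_j)$ as the pointwise operator $z\mapsto\Ad(\Phi_j(z))$ and using the identity of Lemma~\ref{IntertwinerProperty} in the rearranged form $\rho_{j,N}(r)\Ad(\Phi_j(\sigma(r)^{-1}z))=\Ad(\Phi_j(z))$, a short manipulation gives
\[
(\rho_{j,N}\otimes\tilde{\sigma})(r)\circ\Ad(\Phi_j)=\Ad(\Phi_j)\circ(\id\otimes\tilde{\sigma})(r)\qquad\text{for all }r\in C_N.
\]
Thus $\Ad(\Phi_j)$ intertwines the action that is trivial on the $\mf{sl}_2$-factor with the diagonal action $\rho_{j,N}\otimes\tilde{\sigma}$, and therefore restricts to a Lie algebra isomorphism from $(\mf{sl}_2\otimes_{\CC}\ot)^{\id\otimes\tilde{\sigma}(C_N)}=\mf{sl}_2\otimes_{\CC}\ot^{\tilde{\sigma}(C_N)}$ onto the aLia; in particular $E,F,H$ are invariant and form an $\ot^{\tilde{\sigma}(C_N)}$-basis of it.

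To finish I would identify the ring of invariants. Because $\sigma(r)z=z+\alpha$ with $\alpha$ an $N$-torsion point, Lemma~\ref{lem:CN invariants} gives $\ot^{\tilde{\sigma}(C_N)}=\CC[\wp_{\LL_{(\alpha)}},\wp_{\LL_{(\alpha)}}']$, so the ``suitable lattice'' of the statement is $\LL=\LL_{(\alpha)}$. Combining this with the previous paragraph yields both the isomorphism $(\mf{sl}_2\otimes_{\CC}\ot)^{\rho\otimes\tilde{\sigma}(C_N)}\cong\mf{sl}_2\otimes_{\CC}\ot^{\tilde{\sigma}(C_N)}$ and the asserted normal form $\CC\langle E,F,H\rangle\otimes_{\CC}\CC[\wp_{\LL},\wp_{\LL}']$ with the displayed brackets.

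The main obstacle is the case of even $N$. There a faithful $\rho\colon C_N\to\A(\mf{sl}_2)\cong\PSL_2(\CC)$ does not lift to a genuine homomorphism $C_N\to SL_2(\CC)$: a diagonal lift of an order-$N$ element of $\PSL_2(\CC)$ squares its eigenvalues, forcing a primitive $2N$-th root of unity and hence $\delta(r)^N=-\mathrm{Id}$, so no $\LL$-periodic, $SL_2(\CC)$-valued matrix with entries in $\ot$ can intertwine a faithful $\rho$ (indeed $\rho_{j,N}$ has kernel $C_2$ when $N$ is even). I would resolve this in one of two ways. Either invoke the general fact that a free $\Gamma$-action on $\TT$ has no ramification points and hence yields an untwisted current algebra (cf.~\cite{duffield2024wild}), which establishes the isomorphism for every faithful $\rho$ and with it the existence of a normal form; or, to retain the explicit construction, replace $\Phi_j$ by a matrix assembled from sections transforming under $\LL$ by a $\{\pm1\}$-valued character (a theta-characteristic twist). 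Such a $\Phi$ is defined only up to sign, but $\Ad(\Phi)$ remains single-valued on $\mf{sl}_2\otimes_{\CC}\ot$ and realizes the faithful $\rho$, after which the argument of the two middle paragraphs applies verbatim.
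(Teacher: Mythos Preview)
Your argument for odd $N$ is correct and coincides with the paper's: reduce via Lemma~\ref{lem:Klein} and Lemma~\ref{lem:IsomALias} to $\rho=\rho_{j,N}=\Ad\circ\delta_{j,N}$, then use the intertwining property of $\Ad(\Phi_j)$ (Lemmas~\ref{LemmaDeterminant} and~\ref{IntertwinerProperty}) together with Lemma~\ref{lem:CN invariants} to obtain the isomorphism and the normal form, exactly as in Proposition~\ref{Mauto}.

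For even $N$ you have correctly located the obstruction, and your option (b) is the paper's idea stated abstractly. The paper makes it concrete not by choosing sections with a sign-character directly, but by passing to an honest double cover: one doubles a suitable period of $\LL$ to obtain $\hat{\LL}$ and $\hat{T}=\CC/\hat{\LL}$, lifts $\alpha$ to a $2N$-torsion point of $\hat{T}$, and lets $C_{2N}$ act by translations there. Now $\hat{\rho}=\Ad\circ\delta_{j,2N}$ is available, the intertwiner $\Phi_j$ is built on $\hat{T}$ with genuine entries in $\mathcal{O}_{\hat{\TT}}$, and one checks $\hat{T}/\sigma_1(C_{2N})=T/\sigma(C_N)$ and $\hat{\TT}/\langle r^N\rangle=\TT$; Lemma~\ref{aLiaIdentity1} then identifies the $C_{2N}$-aLia on $\hat{T}$ with the original $C_N$-aLia on $T$. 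Since $r^N\in\ker\hat{\rho}$, one has $\Ad(\Phi(r^N\cdot z))=\Ad(\Phi(z))$, so the basis elements $E,F,H$ descend to $T$. This is exactly your ``$\Ad(\Phi)$ remains single-valued'' observation, realised through the explicit cover. Your option (a), citing \cite{duffield2024wild}, would give the isomorphism $(\mf{sl}_2\otimes_{\CC}\ot)^{C_N}\cong\mf{sl}_2\otimes_{\CC}\ot^{C_N}$ but would not by itself produce the explicit generators $E=\Ad(\Phi_j)(e\otimes 1)$ etc., which are part of the theorem's assertion.
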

\begin{proof}
The main ingredients are Proposition \ref{Mauto} and Lemma \ref{IntertwinerProperty}. Fix a lattice $\LL=\ZZ\oplus \ZZ\tau$ and let $T=\CC/\LL$. Let $\alpha'\in \CC$ be such that the projection of $\alpha'$ under $\pi_{\LL}:\CC\rightarrow \CC/\LL$, $\alpha:=\pi(\alpha')$ is an $N$-torsion point of $T$. Thus $\alpha$ satisfies $N\alpha=0 \mod \LL$. We may assume that $\sigma$ is given by $\sigma_{\alpha}(r)z=z+\alpha$.

First, let $N$ be odd.  We may also assume that $\delta(r)=\delta_{j,N}(r):=\diag(\omega_N^j,\omega_N^{-j})$ for some $j$ with $\gcd(j,N)\neq 1$ and that $\rho$ is given by $\Ad\circ\delta$. The explanation is that if $\delta'$ is another monomorphism, and $\rho'=\Ad\circ\delta'$, then we know by Lemma \ref{lem:Klein} that $\rho(C_N)$ and $\rho'(C_N)$ are conjugated subgroups. Lemma \ref{lem:IsomALias} then tells us that the aLias defined by $\rho$ and $\rho'$ are isomorphic. 

The algebra of invariants $\ot^{\tilde{\sigma}_{\alpha}(C_N)}$ is given by $\CC[\wp_{\LL_{(\alpha)}},\wp_{\LL_{(\alpha)}}']$ by Lemma \ref{lem:CN invariants}.
Now, $\Ad(\Phi_j)$ satisfies \eqref{Mintertwiner} because of Lemma \ref{LemmaDeterminant} and Lemma \ref{IntertwinerProperty}, for which reason we can apply Proposition \ref{Mauto}. We therefore have, as in the proof of Proposition \ref{Mauto}, that the following elements form a basis of $\mathfrak{A}=(\mathfrak{sl}_2\otimes_{\CC}\mathcal{O}_{\mathbb{T}})^{C_N}$ over $\CC[\wp_{\Lambda_{(\alpha)}},\wp_{\Lambda_{(\alpha)}}']$:
$$E_j=\mathrm{Ad}(\Phi_j) e\otimes 1,\quad F_j=\mathrm{Ad}(\Phi_j) f\otimes 1,\quad H_j=\mathrm{Ad}(\Phi_j) h\otimes 1,$$
where $\Phi_j$ is given in \eqref{intertwiner1}. Remark \ref{Rem:EqRep} can be used to find a normal form if instead of $\delta$, we had chosen $\delta'$ as defined in the beginning of the proof.\\

Assume now $N$ is even. We may assume that $\rho$ is given by $$\rho(r)\begin{pmatrix}a&b\\c&-a
\end{pmatrix}=\begin{pmatrix}a&\omega_Nb\\\omega_N^{-1}c&-a
\end{pmatrix}.$$Observe that the image of $\rho$ and $\hat{\rho}=\Ad\circ \delta_{j,2N}$ coincide. Notice that $\ker\hat{\rho}=\langle r^{N}\rangle\cong C_2$, where $r\in C_{2N}$. 
Define $$\hat{\LL}=\begin{cases}
\ZZ2\oplus \ZZ\tau &\,\,\text{if}\,\, N\alpha/2\in \{1/2 \mod \LL, (1+\tau)/2\mod \LL\},\\
\ZZ\oplus \ZZ2\tau &\,\,\text{if}\,\, N\alpha/2=\tau/2\mod \LL.
\end{cases}$$ Let $\hat{T}=\CC/\hat{\LL}$ and define $\hat{\TT}$ accordingly. Define $\sigma_1:C_{2N}\rightarrow \A(\hat{T})$ by $\sigma_1(r)z=z+\pi_{\hat{\LL}}(\alpha')$ and notice that $\pi_{\hat{\LL}}(N\alpha')=N\alpha$ is a half lattice point of $\hat{\LL}$.\\

Recall that in the proof of Lemma \ref{lem:CN invariants} we used that for any $\alpha\in T$, $\mathcal{O}^{\tilde{\sigma}_\alpha(C_N)}_{\mathbb{T}}$ is the space of $\Lambda_{(\alpha)}$-periodic meromorphic functions which are holomorphic on $T\setminus \Lambda_{(\alpha)}$. This allowed us to say that $\ot^{\Gamma}=\mathcal{O}_{\TT/\Gamma}$. We adopt this perspective again.
Suppose $N\alpha/2=1/2\mod \LL$. Then $\hat{T}/\sigma_1(C_{2N})= \CC/(\ZZ2+\ZZ\tau+\ZZ\alpha')$. Notice that $N\alpha'=1+2n_1+2n_2\tau$ for some $n_1,n_2\in \ZZ$. We see that $1\in \ZZ2+\ZZ\tau+\ZZ\alpha'$ and hence $$\hat{T}/\sigma_1(C_{2N})= \CC/(\ZZ 2+\ZZ\tau+\ZZ\alpha')=\CC/(\ZZ+\ZZ\tau+\ZZ\alpha').$$
Also observe that $T/\sigma(C_N)= \CC/(\ZZ+\ZZ\tau+\ZZ\alpha')$. Therefore $\hat{T}/\sigma_1(C_{2N})=T/\sigma(C_N)$. Similarly we have $\hat{\TT}/\langle r^N\rangle=\TT$. The same claim holds in the cases where $N\alpha/2\in \{\tau/2,(1+\tau)/2\}$. \\

Define $\Phi:=\Phi_j$ associated to $\delta_{j,2N}$ and $\sigma_1$, for some integer $j\not\equiv N/2$. It is clear that $\Ad(\Phi)\in \A_{\mathcal{O}_{\hat{\TT}}}(\mf{sl}_2\otimes_{\CC}\mathcal{O}_{\hat{\TT}})$. By similar reasoning as in Proposition \ref{Mauto}, we have that $\Ad(\Phi)$ establishes an isomorphism 
$$\mf{B}:=(\mf{sl}_2\otimes_{\CC}\mathcal{O}_{\hat{\TT}})^{\hat{\rho}\otimes\tilde{\sigma}_1(C_{2N})}\cong \mf{sl}_2\otimes_{\CC}\mathcal{O}_{\hat{\TT}}^{\tilde{\sigma}_1(C_{2N})}.$$
We know $\hat{T}/\sigma_1(C_{2N})= T/\sigma(C_N)$ and therefore $\mathcal{O}_{\hat{\TT}}^{\tilde{\sigma}_1(C_{2N})}= \mathcal{O}_{\TT}^{\tilde{\sigma}(C_{N})}.$ Hence $\mf{B}\cong  \mf{sl}_2\otimes_{\CC}\mathcal{O}_{\TT}^{\tilde{\sigma}(C_{N})}.$
We claim that $\mf{B}$ is precisely $(\mathfrak{sl}_2\otimes_{\CC}\mathcal{O}_{\mathbb{T}})^{\rho\otimes\tilde{\sigma}(C_{N})}.$ By Lemma \ref{aLiaIdentity1} we have that $\mf{B}=(\mathfrak{sl}_2\otimes_{\CC}\mathcal{O}_{\hat{\mathbb{T}}/\langle r^N\rangle})^{\hat{\Gamma}}$, where $\hat{\Gamma}=\langle r\rangle/\langle r^N\rangle \cong C_N$. Now, since $\hat{\TT}/\langle r^N\rangle=\TT$ and the action of $\hat{\Gamma}$ is the same as the one induced from $\sigma$, we conclude that $\mf{B}=(\mathfrak{sl}_2\otimes_{\CC}\mathcal{O}_{\mathbb{T}})^{\rho\otimes\tilde{\sigma}(C_{N})}.$\\

We know $\Ad(\Phi(r^N\cdot z))=\Ad(\Phi(z))$, since $r^N\in \ker(\hat{\rho})$. Therefore, any invariant element $X:=\Ad(\Phi)x$, with $x\in \mf{sl}_2$, consists of entries which are invariant under $\langle r^N\rangle\cong C_2$. The remainder of the proof for $N$ is even, follows as it did for $N$ odd. A normal form is given by $$E=\mathrm{Ad}(\Phi) e\otimes 1,\quad F=\mathrm{Ad}(\Phi) f\otimes 1,\quad H=\mathrm{Ad}(\Phi) h\otimes 1.$$ This proves the claim.
\end{proof}
Explicitly, the generators $H_j:=\mathrm{Ad}(\Phi_j)h\otimes 1$,  $E_j:=\mathrm{Ad}(\Phi_j)e\otimes 1$ and $F_j:=\mathrm{Ad}(\Phi_j)f\otimes 1$ ($j\not\equiv 0 \mod N/2$) are given by \begin{align}
H_j = \frac{1}{\mu}\begin{pmatrix}
P_{-j}^2 P_{2j} + P_{-2j} P_j^2 & -2 P_{-j} P_j P_{-2j} -\lambda P_{-j}^{2} \\
2 P_{-j} P_j P_{2j} - \lambda P_j^{2}  & -P_{-j}^2 P_{2j} - P_{-2j} P_j^2
\end{pmatrix},
\quad E_j=\begin{pmatrix}
-P_{-j} P_j& P_{-j}^{2} \\
-P_j^{2}& P_{-j} P_j
\end{pmatrix}, \end{align}
and 
\begin{align}
F_j=\frac{1}{4\mu^2}\begin{pmatrix}
4P_{-j} P_j P_{-2j} P_{2j} + \lambda^{2}P_{-j} P_j  + 2\lambda \mu & -4P_j^{2} P_{-2j}^{2} - 4 \lambda P_{-j} P_j P_{-2j} -  \lambda^{2}P_{-j}^{2} \\
4P_{-j}^2 P_{2j}^{2} - 4 \lambda P_{-j} P_j P_{2j} +  \lambda^{2}P_j^{2} & -4P_j P_{-j} P_{2j} P_{-2j} - \lambda^{2} P_{-j} P_j- 2\lambda \mu\end{pmatrix}.
\end{align}
In the next corollary, we shall omit the $j$-dependence of $E,F,H$ and assume this implicitly.
\begin{Corollary}\label{DNaLia}
 Let $\rho:D_N\rightarrow \A(\mf{sl}_2)$ and $\sigma:D_N\rightarrow \A(T)$ be monomorphisms. Then $$(\mathfrak{sl}_2\otimes_{\CC}\mathcal{O}_{\mathbb{T}})^{\rho\otimes \tilde{\sigma}(D_N)}\cong \mf{S}_{\tau},$$ for some $\tau\in \mathbb{H}$.
A normal form is given by $$(\mathfrak{sl}_2\otimes_{\CC}\mathcal{O}_{\mathbb{T}})^{\rho\otimes \tilde{\sigma}(D_N)}=\CC\langle \tilde{E}, \tilde{F}, \tilde{H}\rangle\otimes_{\CC} \CC[\wp_{\Lambda}],$$ where $\LL$ is a suitable lattice and $
\tilde{E}=E\otimes \wp_{\Lambda}'$, $\tilde{F}=F\otimes \wp_{\Lambda}'$ and $\tilde{H}=H$, where $E,F,H$ are defined in the proof of Theorem \ref{CNaLia}.
The Lie structure is given by \begin{align}\label{DN brackets}
[\tilde{H},\tilde{E}]=2\tilde{E},\quad [\tilde{H},\tilde{F}]=-2\tilde{F},\quad [\tilde{E},\tilde{F}]=\tilde{H}\otimes (4\wp_{\Lambda}^3-g_2\wp_{\Lambda}-g_3).\end{align}
\end{Corollary}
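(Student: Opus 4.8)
The plan is to obtain the $D_N$-aLia from the $C_N$-aLia of Theorem \ref{CNaLia} by passing to the invariants of the residual $C_2$-action. Write $D_N\cong C_2\ltimes C_N$ with $C_N=\langle r\rangle$ the translations $r(z)=z+\alpha$ and $C_2=\langle s\rangle$ with $s(z)=-z$. By Lemma \ref{NormalSubgroupAlia},
$$(\mf{sl}_2\otimes_{\CC}\mathcal{O}_{\mathbb{T}})^{D_N}=\big((\mf{sl}_2\otimes_{\CC}\mathcal{O}_{\mathbb{T}})^{C_N}\big)^{C_2}=\mf{K}^{C_2},$$
and Theorem \ref{CNaLia} already supplies the normal form $\mf{K}=\CC\langle E,F,H\rangle\otimes_{\CC}\CC[\wp_{\Lambda},\wp_{\Lambda}']$ with $[H,E]=2E$, $[H,F]=-2F$, $[E,F]=H$, where $E=\Ad(\Phi_j)(e\otimes 1)$, and similarly for $F,H$. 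It remains to identify $\mf{K}^{C_2}$.

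First I would pin down how $s$ acts on the triple $E,F,H$. Using the equivariance \eqref{DN-equivariance}, $\Phi_j(s\cdot z)=S\,\Phi_j(z)\,D_-$ with $S=\left(\begin{smallmatrix}0&1\\1&0\end{smallmatrix}\right)$ and $D_-=\left(\begin{smallmatrix}-1&0\\0&1\end{smallmatrix}\right)$, together with $\sigma(s)^{-1}=\sigma(s)$ and $\Ad(S)^2=\id$, a direct substitution into the action rule $(s\cdot Y)(z)=\rho(s)\,Y(\sigma(s)^{-1}z)$ gives $s\cdot(\Ad(\Phi_j)x)=\Ad(\Phi_j)(\Ad(D_-)x)$ for $x\in\mf{sl}_2$, once we take $\rho(s)=\Ad(S)$. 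Since $\Ad(D_-)$ fixes $h$ and negates $e$ and $f$, the triple transforms by $H\mapsto H$, $E\mapsto -E$, $F\mapsto -F$. On the coefficient ring $\CC[\wp_{\Lambda},\wp_{\Lambda}']$ the map $s$ acts through the parity of the argument, fixing $\wp_{\Lambda}$ and negating $\wp_{\Lambda}'$ (consistent with Lemma \ref{action of s}).

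Because $H$ is $s$-invariant and generates a CSA, Lemma \ref{Nnormal-form} (equivalently Proposition \ref{Mauto} with $G=C_2$) applies and guarantees a normal form for $\mf{K}^{C_2}$; concretely, I would read it off by imposing $s$-invariance on each isotypical component. A coefficient of $H$ must be even, hence lie in $\CC[\wp_{\Lambda}]$, while a coefficient of $E$ or $F$ must be odd to absorb the sign, hence lie in $\CC[\wp_{\Lambda}]\wp_{\Lambda}'$ (cf. Lemma \ref{IsotypicalComponents} with $\ell=2$). This yields the generators $\tilde{H}=H$, $\tilde{E}=E\otimes\wp_{\Lambda}'$, $\tilde{F}=F\otimes\wp_{\Lambda}'$ over $\mathcal{O}_{\mathbb{T}}^{D_N}=\CC[\wp_{\Lambda}]$. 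The brackets then follow by $\CC[\wp_{\Lambda}]$-linearity: $[\tilde{H},\tilde{E}]=2\tilde{E}$ and $[\tilde{H},\tilde{F}]=-2\tilde{F}$ are immediate, while
$$[\tilde{E},\tilde{F}]=[E,F]\otimes(\wp_{\Lambda}')^2=H\otimes(\wp_{\Lambda}')^2=\tilde{H}\otimes(4\wp_{\Lambda}^3-g_2\wp_{\Lambda}-g_3),$$
using the Weierstrass relation \eqref{wpRel}. Comparing with \eqref{def:Stau} under the identification $x\leftrightarrow\wp_{\Lambda}$ exhibits the isomorphism $\mf{K}^{C_2}\cong\mf{S}_{\tau}$, where $\tau$ is the modular parameter of the lattice $\Lambda$ (normalised to $\ZZ\oplus\ZZ\tau$ via the homothety isomorphisms of the introduction).

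The computational core is routine once the $s$-action is known, so the only genuine point to verify is the equivariance \eqref{DN-equivariance}: the columns of $\Phi_j$ were built to intertwine the $C_N$-action, and the extra condition $q(s\cdot z)=Sq(z)$ imposed in their construction is exactly what makes $\Ad(\Phi_j)$ compatible with $s$ as well. I expect the one spot needing care to be the even-$N$ branch of Theorem \ref{CNaLia}, where $\Phi_j$ is defined on the doubled torus $\hat{T}$; there I would check that the relation \eqref{DN-equivariance} and the action $\Ad(D_-)$ survive the descent along $\langle r^N\rangle$, so that the argument above goes through uniformly in the parity of $N$. The precise value of $\tau$ is not needed here and is determined in Theorem \ref{thm:classification}.
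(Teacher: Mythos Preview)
Your proof is correct and follows essentially the same route as the paper: verify the $s$-equivariance \eqref{DN-equivariance} of $\Phi_j$, deduce that $H$ is $C_2$-invariant, and then invoke Lemma \ref{Nnormal-form} together with the $C_2$-isotypical decomposition of $\CC[\wp_\Lambda,\wp_\Lambda']$ (which the paper packages as a reference to Example \ref{C2aLia}). Your version is simply more explicit about the intermediate step $s\cdot(E,F,H)=(-E,-F,H)$ and the bracket computation, and your caveat about the even-$N$ descent is prudent but harmless, since the condition \eqref{DN-equivariance} is built into $\Phi_j$ independently of the parity of $N$.
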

\begin{proof}
We may assume that $\sigma$ is given by $\sigma(s)z=-z$ and $\sigma(r)z=z+\alpha$ for some $N$-torsion point $\alpha\in T$. Let $S=\begin{pmatrix}
0& 1\\
1 & 0
\end{pmatrix}$. 
We will use Proposition \ref{Mauto}. Recall $H=\mathrm{Ad}(\Phi)h,$ where $\Phi=\Phi_j$ for some $j\not\equiv 0\mod N/2$. To satisfy condition \eqref{Mintertwiner}, we only need to verify condition \eqref{DN-equivariance}: $$\Phi(s\cdot z)=S\Phi(z)\begin{pmatrix}
-1& 0\\
0 & 1
\end{pmatrix},$$
where $s$ generates $C_2\subset\A_0(T)$. This holds as a result of the construction of $\Phi$ in the beginning of this section. 
Thus $H$ is a $D_N$-invariant as well. Hence we can apply Lemma \ref{Nnormal-form} and use Example \ref{C2aLia} to obtain the stated normal form. 
\end{proof}
Notice that above corollary also covers the aLia $(\mathfrak{sl}_2\otimes_{\CC}\mathcal{O}_{\mathbb{T}})^{C_2}$, where  $g(T/C_2)=0$, since $D_1=C_2\ltimes 1\cong C_2.$\\

So far, we have considered the groups $C_N$ and $D_N\cong C_2\ltimes C_N$ for which $C_N$ is embedded as translations in $\A(T)$. There are a number of cyclic groups of small order which also allow an embedding as a rotation in $\A(T)$ for a torus $T$ with more symmetry than the generic case, cf. Proposition \ref{lem:groups up to conjugation}.
The groups $C_{\ell}$ for $\ell\in \{3,4,6\}$ can be embedded inside $\A(T)$ as rotations, for tori either isomorphic to $T_i$ or $T_{\omega_6}$. The following theorem describes the aLias with these symmetry groups.
\begin{Theorem}[Genus 0 case]\label{ClaLias}
Let $\rho:C_{\ell}\rightarrow \A(\mathfrak{sl}_2)$ and $\sigma:C_{\ell}\rightarrow \A(T)$ be monomorphisms and assume that $g(T/\sigma(C_{\ell}))=0$. Then $$(\mathfrak{sl}_2\otimes_{\CC}\mathcal{O}_{\mathbb{T}})^{\rho\otimes \tilde{\sigma}(C_{\ell})}\cong \mf{O},$$ if and only if $\ell\in \{3,4,6\}.$

 \end{Theorem}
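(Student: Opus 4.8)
The plan is to reduce to an explicit rotation action, read off the invariants from the isotypical decomposition, and then recognise the resulting bracket as the Onsager form, handling $\ell=2$ separately. First I would note that the hypothesis $g(T/\sigma(C_\ell))=0$ forces $\sigma(C_\ell)$ to act \emph{with} a fixed point: a fixed-point-free cyclic action is by translations and produces a genus $1$ quotient, which is the situation of Theorem \ref{CNaLia}. Hence, after conjugating by a translation and invoking Lemma \ref{lem:IsomALias}, I may assume $\sigma(r)(z)=\omega_\ell z$, which by Lemma \ref{lem:groups up to conjugation}, case \ref{item1A}, occurs only for $\ell\in\{2,3,4,6\}$, and for $\ell\in\{3,4,6\}$ only on $T_i$ or $T_{\omega_6}$. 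Since $0$ is fixed we have $\TT=T\setminus\{0\}$ and $\ot=\CC[\wp,\wp']$ by Lemma \ref{RegularFunctions}. As $C_\ell$ is cyclic it preserves a Cartan subalgebra of $\mf{sl}_2$, so by Lemma \ref{lem:Klein} and Lemma \ref{lem:IsomALias} I may take $\rho$ diagonal, with $h\mapsto h$, $e\mapsto\omega_\ell e$, $f\mapsto\omega_\ell^{-1}f$ (realised by $\Ad\diag(\omega_{2\ell},\omega_{2\ell}^{-1})$); thus $h,e,f$ span the $\chi_0,\chi_1,\chi_{\ell-1}$ isotypical components. Taking invariants yields
\[
\mathfrak{A}=h\otimes\ot^{\chi_0}\ \oplus\ e\otimes\ot^{\chi_{\ell-1}}\ \oplus\ f\otimes\ot^{\chi_1}.
\]

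For $\ell=2$ this is precisely the algebra of Example \ref{C2aLia}, so $\mathfrak{A}\cong\mf{S}_\tau$; its abelianisation has dimension $3$, whereas that of $\mf{O}$ has dimension $2$, so $\mathfrak{A}\not\cong\mf{O}$. Together with the fact that genus $0$ forces $\ell\in\{2,3,4,6\}$, this disposes of the ``only if'' direction.

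For $\ell\in\{3,4,6\}$ I would read the generators off Lemma \ref{IsotypicalComponents}. Write $x$ for the generator of $\ot^{C_\ell}=\ot^{\chi_0}$, namely $x=\wp',\wp^2,\wp^3$ for $\ell=3,4,6$; each $\ot^{\chi_j}$ is free of rank one over $\CC[x]$. Setting $H=h\otimes1$, $E=e\otimes a$, $F=f\otimes b$ with $a\in\ot^{\chi_{\ell-1}}$, $b\in\ot^{\chi_1}$ the module generators, I obtain $\mathfrak{A}=\CC\langle E,F,H\rangle\otimes_\CC\CC[x]$ with $[H,E]=2E$, $[H,F]=-2F$, $[E,F]=H\otimes ab$, which is already a normal form in the sense of Definition \ref{def:NormalForm} (so these aLias are hereditary). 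The crux is to identify $ab\in\CC[x]$: using $(\wp')^2=4\wp^3-g_2\wp-g_3$ together with $g_2=0$ on $T_{\omega_6}$ (for $\ell=3,6$) and $g_3=0$ on $T_i$ (for $\ell=4$), a direct computation gives $ab=\tfrac14(x^2+g_3)$ for $\ell=3$, $ab=x(4x-g_2)$ for $\ell=4$, and $ab=x(4x-g_3)$ for $\ell=6$. In each case $ab$ is a quadratic in $x$ with two \emph{distinct} roots, the distinctness coming from $g_3\neq0$ (resp.\ $g_2\neq0$) on the special torus. An affine substitution sending the two roots to $0$ and $1$, followed by rescaling $F$ by a nonzero constant, turns the bracket into $[E,F]=H\otimes x(x-1)$, which by \cite[Theorem 2.5]{knibbeler2023automorphic} is the concrete form of the Onsager algebra; hence $\mathfrak{A}\cong\mf{O}$, giving the ``if'' direction.

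The main obstacle I anticipate is the bracket computation: one must match the isotypical labels correctly so that $E$ and $F$ pair into $\ot^{C_\ell}$, and then verify that the vanishing of $g_2$ or $g_3$ on the special torus collapses $ab$ to a genuine quadratic — two branch points — rather than the cubic $4\wp^3-g_2\wp-g_3$ that appears for $\ell=2$ and produces $\mf{S}_\tau$. Confirming that the two roots are distinct, i.e.\ that the surviving elliptic invariant is nonzero for $T_i$ and $T_{\omega_6}$, is exactly what separates $\mf{O}$ from the degenerate cases and secures the affine reduction to $x(x-1)$.
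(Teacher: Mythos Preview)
Your proof is correct and follows essentially the same route as the paper: reduce to the diagonal rotation action via Lemma~\ref{lem:IsomALias}, read off the isotypical components from Lemma~\ref{IsotypicalComponents}, and then recognise the resulting bracket $[E,F]=H\otimes ab$ as a quadratic with distinct roots, hence Onsager after an affine change of variable. The paper carries out the computation in detail only for $\ell=3$ and declares $\ell=4,6$ analogous, whereas you give the explicit product $ab$ in all three cases; your added remark that the genus~$0$ hypothesis already forces a rotation action (and hence $\ell\in\{2,3,4,6\}$) and your use of the abelianisation to exclude $\ell=2$ are both cleaner than the paper's appeal to Example~\ref{C2aLia}, but the substance is the same.
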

\begin{proof}
By Lemma \ref{lem:IsomALias}, we only have to prove the statement for a particular choice of monomorphisms.
Let $\ell=3$ and define the homomorphism $\rho: C_{3}\rightarrow \A(\mf{sl}_2)$  by
$$
\rho(s)\begin{pmatrix}
a & b\\
c &-a
\end{pmatrix}=\begin{pmatrix}
a & \omega_3b\\
\omega_{3}^{-1}c &-a
\end{pmatrix}.
$$
Let $s\in C_{3}$ act on $T\cong T_{\omega_6}$ as $s\cdot z=\omega_3z$. Recall that $s\cdot \wp(z)=\wp(\omega_{3}^{-1}z)=\omega_3^2\wp(z)$ and $s\cdot \wp'=\wp'$ by Equations \eqref{wpScaling1} and \eqref{wpScaling2}.
By Proposition \ref{IsotypicalComponents} we have
$$
\mathcal{O}_{\mathbb{T}}^{\tilde{\sigma}(C_3)}=\mathcal{O}_{\mathbb{T}}^{\chi_0}=\CC[\wp'],\quad
\mathcal{O}_{\mathbb{T}}^{\chi_1}=\CC[\wp']\wp^2,\quad
\mathcal{O}_{\mathbb{T}}^{\chi_2}=\CC[\wp']\wp.
$$
Define the matrices $$H=\begin{pmatrix}
1& 0\\
0 &-1
\end{pmatrix},\quad E=\begin{pmatrix}
0 & 1\\
0 &0
\end{pmatrix}\otimes \wp,\quad F=\begin{pmatrix}
0 & 0\\
1 &0
\end{pmatrix}\otimes \wp^2,$$ which are easily seen to be invariant.
By decomposing the action of $C_3$ on both factors of the tensor product, we get $(\mf{sl}_2\otimes_{\CC}\ot)^{\rho\otimes \tilde{\sigma}(C_3)}=\bigoplus_{j=0}^2\mf{sl}_2^{\chi_j}\otimes_{\CC}\ot^{\overline{\chi_j}}$, where $\overline{\chi_j}(g)=\overline{\chi_j(g)}$, and we obtain $$\mathfrak{A}:=(\mathfrak{sl}_2\otimes_{\CC} \mathcal{O}_{\mathbb{T}})^{\rho\otimes\tilde{\sigma}(C_3)}= \CC\langle E,F,H\rangle\otimes_{\CC} \mathcal{O}_{\mathbb{T}}^{\tilde{\sigma}(C_3)}=\CC\langle E,F,H\rangle\otimes_{\CC} \CC[\wp']$$ with brackets $$[H,E]=2E, \quad [H,F]=-2F, \quad [E,F]=H\otimes \wp^3.$$
We will show that after performing a sequence of scalings and translations, $[E,F]$ can be written as $[E,F]=H\otimes \wp'(\wp'-1)$, after which we will recognise Onsager's algebra $\mf{O}$ in it.

By Lemma \ref{IsotypicalComponents}, a generator $J$ of $\mathcal{O}_{\mathbb{T}}^{\tilde{\sigma}(C_3)}$ can be chosen to be $\wp'$. The last bracket can be rewritten as 
\begin{align*}
[E,F]=H\otimes \frac{1}{4}((\wp')^2+g_3)=H\otimes \frac{1}{4}(J^2+g_3).
\end{align*}
Notice that $g_3\neq 0$ since $g_2=0$ and they cannot be both zero. Let $E'=-\frac{4}{g_3}E$. Then $[E',F]=H\otimes (-\frac{1}{g_3}J^2-1).$ Replace $-\frac{1}{g_3}J^2$ by $J^2$ to obtain $ [E',F]=H\otimes (J^2-1)$.
Replace $J-1$ by $J$ to get $[E',F]=H\otimes J(J+2)$ and transform further to obtain $[E',F]=H\otimes J(J-1)$ to see that $\mathfrak{A}$ is isomorphic to the Onsager algebra, cf. \cite[Theorem 2.5]{knibbeler2023automorphic}.\\

Another possibility for embedding $C_3$, is given by $\sigma'(r)(z)=\omega_3^{-1}z.$ The $\chi_1$ and $\chi_2$ isotypical components interchange when we consider this action. Define the elements $$\tilde{H}=-\begin{pmatrix}
1& 0\\
0 &-1
\end{pmatrix},\quad\tilde{E}=-\begin{pmatrix}
0 & 1\\
0 &0
\end{pmatrix}\otimes \wp^2,\quad \tilde{F}=-\begin{pmatrix}
0 & 0\\
1 &0
\end{pmatrix}\otimes \wp.$$
Observe how $\mathrm{Ad}\begin{pmatrix}
0 & 1\\
-1 &0
\end{pmatrix}$ maps $H,E,F$ to $\tilde{H},\tilde{F},\tilde{E}$, respectively. By Lemma \ref{lem:IsomALias}, we see that
\begin{align*}
(\mathfrak{sl}_2\otimes_{\CC} \mathcal{O}_{\mathbb{T}})^{\rho\otimes \tilde{\sigma}'(C_3)}\cong (\mathfrak{sl}_2\otimes_{\CC} \mathcal{O}_{\mathbb{T}})^{\rho\otimes \tilde{\sigma}(C_3)}.
\end{align*}
The cases of $\ell=4$ and $\ell=6$ follow in an analogous manner. By Example \ref{C2aLia}, $(\mathfrak{sl}_2\otimes_{\CC} \mathcal{O}_{\mathbb{T}})^{C_2}\not\cong \mf{O}$. This proves the claim.
\end{proof}
The proof of Theorem \ref{ClaLias} shows that a normal form of $(\mathfrak{sl}_2\otimes_{\CC}\mathcal{O}_{\mathbb{T}})^{\rho\otimes \tilde{\sigma}(C_{3})}$ is given by $\CC\langle E_3,F_3,H\rangle \otimes_{\CC} \CC[\wp']$, where $H=h$, $E_3=e\otimes \wp$, $F_3=f\otimes \wp^2,$ and $\wp=\wp_{\LL_{\omega_6}}$. 
In a similar fashion, one can obtain normal forms for remaining aLias with $\Gamma=C_4$ and $\Gamma=C_6$ using Proposition \ref{IsotypicalComponents}. For $\ell\in \{4,6\}$, choose the homomorphism $\rho_{\ell}:C_{\ell}\rightarrow \A(T_{\omega_{\ell}})$ defined by
$$\rho_{\ell}(s)\begin{pmatrix}
a & b\\
c &-a
\end{pmatrix}=\begin{pmatrix}
a & \omega_{\ell}b\\
\omega_{\ell}^{-1}c &-a
\end{pmatrix}$$
and $\sigma_{\ell}:C_{\ell}\rightarrow \A(T_{\omega_{\ell}})$ defined by $\sigma(s)z=\omega_{\ell}z$. Then  
\begin{align}\label{Normal Form C4}
(\mathfrak{sl}_2\otimes_{\CC}\mathcal{O}_{\mathbb{T}})^{\rho_4\otimes \tilde{\sigma}_4(C_{4})}=\CC\langle E_4,F_4,H\rangle \otimes_{\CC} \CC[\wp^2]\cong \mf{O},
\end{align} 
where $E_4=e\otimes \wp\wp'$, $F_4=f\otimes \wp'$, and $\wp=\wp_{\LL_i}$.\\

Finally, for $C_6$ we have the following normal form:
\begin{align}\label{Normal Form C6}
(\mathfrak{sl}_2\otimes_{\CC}\mathcal{O}_{\mathbb{T}})^{\rho_6\otimes \tilde{\sigma}_6(C_{6})}=\CC\langle E_6,F_6,H\rangle \otimes_{\CC} \CC[\wp^3]\cong \mf{O},
\end{align}
 where $E_6=e\otimes \wp\wp'$, $F_6=f\otimes \wp^2\wp'$, and $\wp=\wp_{\LL_{\omega_6}}$.
 
\subsection{The cases $\Gamma=C_2\times C_2$ and $\Gamma=A_4$}\label{subsectionC2xC2}

We will now consider the final two aLias in our classification, namely those with symmetry group $C_2\times C_2$ and a homomorphism $\sigma:C_2\times C_2\rightarrow \A(T)$, such that $g(T/\sigma(C_2\times C_2))=1$, where $T=\CC/\ZZ\oplus\ZZ\tau$, and those with symmetry group $A_4$.
We may assume that the homomorphism $\sigma$ is given by $\sigma(r_1)z=z+\frac{1}{2}$ and $\sigma(r_2)z=z+\frac{\tau}{2}$, where $r_1,r_2$ generate $C_2\times C_2$, cf. Lemma \ref{lem:groups up to conjugation}.\\

Let $\{\alpha_{00},\alpha_{01},\alpha_{10},\alpha_{11}\}$ denote the set of characters of $C_2\times C_2$, defined by $\alpha_{ij}(r_1,r_2)=\chi_i(r_1)\chi_j(r_2)$, where $\chi_0$, $\chi_1$ are the characters of $C_2$. Let $\mathbb{T}=T\setminus \mathcal{S}$, with $\mathcal{S}=\sigma(C_2\times C_2)\cdot \{0\}=\{0,1/2,\tau/2,(1+\tau)/2\}$ and write $\wp=\wp_{\LL}$. The divisor of $1/\wp'$ is given by 
\begin{align}\label{def:divisor 1/wp'}
(1/\wp')=-(1/2)-(\tau/2)-((1+\tau)/2))+3(0),
\end{align} and therefore $1/\wp'\in \mathcal{O}_{\mathbb{T}}$. Let $\pi_{\alpha_{ij}}=\frac{1}{4}\sum_{r\in C_2\times C_2}\overline{\alpha_{ij}(r)}r$ be the projection of $\mathcal{O}_{\mathbb{T}}$ onto the isotypical component $\mathcal{O}_{\mathbb{T}}^{\alpha_{ij}}$. Define $p_{ij}:=4\pi_{\alpha_{ij}}(1/\wp')$. Concretely, we have
\begin{align}
p_2:=p_{01}&=\frac{1}{\wp'} +r_1\frac{1}{\wp'}-r_2\frac{1}{\wp'}-r_1r_2\frac{1}{\wp'}, \label{p2}\\
p_1:=p_{10}&=\frac{1}{\wp'} -r_1\frac{1}{\wp'}+r_2\frac{1}{\wp'}-r_1r_2\frac{1}{\wp'}, \label{p1}\\
p_0:=p_{11}&=\frac{1}{\wp'} -r_1\frac{1}{\wp'}-r_2\frac{1}{\wp'}+r_1r_2\frac{1}{\wp'}. \label{p0}
\end{align}
By construction, the functions $p_{ij}$ are elements of $\mathcal{O}_{\mathbb{T}}^{\alpha_{ij}}$. Observe that they have at most order 1 poles in the set $\mathcal{S}$. Furthermore, they are odd functions since $\wp'$ is odd and $s$ (with $s(z)=-z$) and $r_1,r_2$ commute.
The definition of the $p_i$ is motivated in Lemma \ref{GammaIdentities}.

\begin{Remark}\label{JacobiTheta}
The $p_j$ can also be defined in terms of theta functions, as is done in \cite{carey1993landau}.
They define $$\lambda_1(z)=\frac{\vartheta_{00}(2z|\tau)}{\vartheta_{00}(0|\tau)\vartheta_{11}(2z|\tau)},\quad \lambda_2(z)=\frac{\vartheta_{10}(2z|\tau)}{\vartheta_{10}(0|\tau)\vartheta_{11}(2z|\tau)},\quad \lambda_3(z)=\frac{\vartheta_{01}(2z|\tau)}{\vartheta_{01}(0|\tau)\vartheta_{11}(2z|\tau)},$$
where $\vartheta_{ab}(z|\tau)=\sum_{n\in \ZZ}\exp(2\pi i[\tau(n+a/2)^2/2+(n+a/2)(z+b/2)])$, and where $(a,b)=(0,0), (1,0), (0,1), (1,1)$.
The relations between our projected $1/\wp'$ are, up to a scaling, as follows: $$p_0\leftrightarrow \lambda_1,\quad p_1\leftrightarrow \lambda_3, \quad p_2\leftrightarrow \lambda_2.$$
\end{Remark}

Written out explicitly, the group $C_2\times C_2$ acts on the $p_i$ as
\begin{align}
r_1\cdot p_0&=-p_0,& r_2\cdot p_0&=-p_0,\\
r_1\cdot  p_1&=- p_1, & r_2\cdot  p_1&= p_1,\\
r_1\cdot  p_2&= p_2, & r_2\cdot  p_2&=- p_2.
\end{align}
Let us obtain the series expansion of $\frac{1}{r\wp'}$ about $z=0$, for $r\in \langle r_1,r_2\rangle\setminus 1$, which we will use to derive the expansions of the $p_i^2$ about $z=0$ in Lemma \ref{lem:ExpansionsGammas}. First of all, for $r\in \langle r_1,r_2\rangle\setminus 1$, we find for $\frac{1}{r\wp'}$ the following expansion about $z=0$: 
\begin{align}\label{Expansion}
\frac{1}{r\wp'(z)}= \frac{1}{6\wp(r\cdot 0)^2-\frac{1}{2}g_2}\frac{1}{z}-\frac{2\wp(r\cdot 0)}{6\wp(r\cdot 0)^2-\frac{1}{2}g_2}z+O(z^2).
\end{align}
For example, for $r=r_1$, we can simplify this to 
$$\frac{1}{r_1\wp'(z)}= \frac{1}{2(e_1-e_3)(e_1-e_2)}\frac{1}{z}-\frac{e_1}{(e_1-e_3)(e_1-e_2)}z+O(z^2),$$
using that $6\wp(1/2)^2-g_2/2=2(e_1-e_3)(e_1-e_2)$. 
Notice that the denominators of the coefficients are indeed nonzero, since for lattices $\LL$, the \emph{discriminant} $\Delta(\LL):=g_2^3-27g_3^2$ is nonzero and can be shown to be equal to $16(e_1-e_2)^2(e_1-e_3)^2(e_2-e_3)^2$.\\

Let us argue that the $p_j$ are not identically zero. To this end, consider $p_{00}=4\pi_{\alpha_{00}}(1/\wp')=\frac{1}{\wp'} +r_1\frac{1}{\wp'}+r_2\frac{1}{\wp'}+r_1r_2\frac{1}{\wp'}$.
 This function is invariant under $C_2\times C_2$ and hence descends to a function on $\tilde{T}:=T/\sigma(C_2\times C_2)$. Now, $p_{00}$ has at most order 1 poles and on $\tilde{T}$, there is therefore at most a single pole. Thus $p_{00}$ must be constant. In fact, since $\wp'$ is odd, $p_{00}$ is odd as well. Hence $p_{00}=0.$
Knowing that $p_{00}$ is constant allows us to draw conclusions about the $p_i$. 
By considering the divisor of $\frac{1}{\wp'}$ in \eqref{def:divisor 1/wp'}, we see that the sets of poles of $r_1\frac{1}{\wp'}, r_2\frac{1}{\wp'}$ and $r_1r_2\frac{1}{\wp'}$ have pairwise precisely two poles in common. We therefore see that the three functions 
\begin{align*}
k_1:=r_2\frac{1}{\wp'}+r_1r_2\frac{1}{\wp'}, \quad k_2:=r_1\frac{1}{\wp'}+r_1r_2\frac{1}{\wp'}, \quad k_3:=r_1\frac{1}{\wp'}+r_2\frac{1}{\wp'}
\end{align*}
are all non-constant. As a consequence we get that $p_0=p_{00}-2k_3=-2k_3$, $ p_1=p_{00}-2k_2=-2k_2$ and $ p_2=p_{00}-2k_1=-2k_1$ are all nonzero. \\

The next lemma describes the algebra of $C_2\times C_2$-invariant subspace of $\mathcal{O}_{\mathbb{T}}$.
\begin{Lemma}\label{InvariantsC2xC2}
We have $\mathcal{O}_{\mathbb{T}}^{\tilde{\sigma}(C_2\times C_2)}=\CC[\wp_{\frac{1}{2}\LL},\wp_{\frac{1}{2}\LL}']$.
\end{Lemma}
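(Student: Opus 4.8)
The plan is to mirror the argument of Lemma \ref{lem:CN invariants}. First I would observe that a function $f\in \mathcal{O}_{\mathbb{T}}$ is fixed by $\tilde{\sigma}(C_2\times C_2)$ precisely when it is invariant under the two translations $z\mapsto z+1/2$ and $z\mapsto z+\tau/2$. Since $f$ is already $\LL$-periodic (being a function on $T$), this invariance is equivalent to $f$ being periodic with respect to the lattice generated by $\LL$ together with $1/2$ and $\tau/2$, which is exactly $\frac{1}{2}\LL=\ZZ\frac{1}{2}\oplus\ZZ\frac{\tau}{2}$.

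Next I would pass to the quotient torus. The inclusion $\LL\subset \frac{1}{2}\LL$ (of index $4$) identifies $T':=\CC/\frac{1}{2}\LL$ with $T/\sigma(C_2\times C_2)$, and the $\frac{1}{2}\LL$-periodic meromorphic functions on $\CC$ are precisely the meromorphic functions on $T'$. Under the quotient map $T\to T'$ the whole orbit $\mathcal{S}=\{0,1/2,\tau/2,(1+\tau)/2\}$ collapses to the single point $[0]\in T'$. A $C_2\times C_2$-invariant function on $T$ is holomorphic on $\TT=T\setminus\mathcal{S}$ if and only if the corresponding function on $T'$ is holomorphic on $T'\setminus\{[0]\}$: a pole of the descended function at any $[p]\neq[0]$ would force a pole at a point $p\notin\mathcal{S}$, violating holomorphy on $\TT$, and the converse is clear. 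Hence $\mathcal{O}_{\mathbb{T}}^{\tilde{\sigma}(C_2\times C_2)}$ coincides with the algebra of meromorphic functions on $T'$ holomorphic on $T'\setminus\{[0]\}$.

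Finally I would apply Lemma \ref{RegularFunctions} with the torus taken to be $T'=\CC/\frac{1}{2}\LL$. This yields $\mathcal{O}_{T'\setminus\{[0]\}}=\CC[\wp_{\frac{1}{2}\LL},\wp_{\frac{1}{2}\LL}']$, which is exactly the claimed identity. There is no genuine obstacle here; the only points requiring care are the correct identification of the enlarged period lattice as $\frac{1}{2}\LL$ and the verification that the four punctures of $T$ merge into the single puncture $[0]$ of $T'$, so that Lemma \ref{RegularFunctions} applies verbatim to $T'$.
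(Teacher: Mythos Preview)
Your proposal is correct and follows essentially the same approach as the paper: identify $\mathcal{O}_{\mathbb{T}}^{\tilde{\sigma}(C_2\times C_2)}$ with the $\frac{1}{2}\LL$-periodic meromorphic functions holomorphic away from $\frac{1}{2}\LL$, then apply Lemma~\ref{RegularFunctions} to the quotient torus $\CC/\frac{1}{2}\LL$. The paper's proof is a one-sentence version of exactly this, so your only addition is the explicit justification that the four punctures collapse to a single puncture on $T'$.
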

\begin{proof}
By definition, $\mathcal{O}_{\mathbb{T}}^{\tilde{\sigma}(C_2\times C_2)}$ is the algebra of all $\frac{1}{2}\LL$-periodic meromorphic functions on $T$ that are holomorphic on $T\setminus\frac{1}{2}\LL$, which corresponds to $\CC[\wp_{\frac{1}{2}\LL},\wp_{\frac{1}{2}\LL}']$ by Lemma \ref{RegularFunctions}.
\end{proof}
We now record the relations between the $p_i^2$ and $\wp_{\frac{1}{2}\LL}$ in a lemma for later purposes.
\begin{Lemma}\label{lem:ExpansionsGammas}
We have
\begin{align*}
p_0^2&=\frac{1}{(e_1-e_3)^2(e_2-e_3)^2}(\wp_{\frac{1}{2}\LL}-4e_3)=\frac{16(e_2-e_3)^2}{\Delta(\LL)}(\wp_{\frac{1}{2}\LL}-4e_3).
\end{align*}
Similarly, the relations of $p_j^2$ and $\wp_{\frac{1}{2}\LL}$ for $j=1,2$ are obtained from $p_0^2$ by cyclic permutations of the $e_i$. 
Consequently, there are the following relations: $ p_1^2=\alpha_1 p_2^2+\alpha_2$, $ p_0^2=\beta_1 p_2^2+\beta_2$, where $$\alpha_1=\left(\frac{e_1-e_3}{e_2-e_3}\right)^2,\quad \alpha_2=\frac{4}{(e_1-e_2)(e_2-e_3)^2},$$ and $$\beta_1=\left(\frac{e_1-e_2}{e_2-e_3}\right)^2,\quad \beta_2=\frac{4}{(e_1-e_3)(e_2-e_3)^2}.$$ Thus $\alpha_1,\alpha_2,\beta_1,\beta_2\in \CC^*$.
\end{Lemma}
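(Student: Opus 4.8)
The plan is to show that each $p_j^2$ is an affine function of $\wp_{\frac12\LL}$ and then to read off the two coefficients from a Laurent expansion at the origin. First I would note that $p_0\in\ot^{\alpha_{11}}$ and $\alpha_{11}^2=\alpha_{00}$ is trivial, so $p_0^2$ is $C_2\times C_2$-invariant; since $p_0$ is odd with respect to $s\colon z\mapsto -z$, the square $p_0^2$ is $s$-even. By Lemma \ref{InvariantsC2xC2} the invariants are $\CC[\wp_{\frac12\LL},\wp_{\frac12\LL}']$, whose $s$-even part is exactly $\CC[\wp_{\frac12\LL}]$ because $\wp_{\frac12\LL}'$ is odd. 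Hence $p_0^2\in\CC[\wp_{\frac12\LL}]$. As $p_0$ has at most simple poles on $\mathcal{S}$, all of which collapse to the single point of $T/\sigma(C_2\times C_2)=\CC/\frac12\LL$, the function $p_0^2$ has one pole there, of order at most $2$; an element of $\CC[\wp_{\frac12\LL}]$ with a pole of order $\le 2$ has degree $\le 1$, so $p_0^2=a\,\wp_{\frac12\LL}+b$ for constants $a,b$.

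To determine $a,b$ I would expand about $z=0$. Writing $p_0(z)=a_{-1}z^{-1}+a_1z+O(z^3)$ (odd), we get $p_0^2=a_{-1}^2z^{-2}+2a_{-1}a_1+O(z^2)$, and comparing with $\wp_{\frac12\LL}(z)=z^{-2}+O(z^2)$ yields $a=a_{-1}^2$ and $b=2a_{-1}a_1$. The coefficients $a_{-1},a_1$ come from \eqref{Expansion}: setting $c_i=(6e_i^2-g_2/2)^{-1}=[2(e_i-e_j)(e_i-e_k)]^{-1}$ for the residue of the corresponding translate of $1/\wp'$ at $0$ (where $e_i=\wp$ of the half-period attached to that translate) and using that $1/\wp'$ vanishes to order $3$ at $0$, the definition \eqref{p0} gives $a_{-1}=-c_1-c_2+c_3$ and $a_1=2e_1c_1+2e_2c_2-2e_3c_3$.

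The key simplification is that $p_{00}=\frac1{\wp'}+r_1\frac1{\wp'}+r_2\frac1{\wp'}+r_1r_2\frac1{\wp'}$ vanishes identically (shown above), so all of its Laurent coefficients at $0$ vanish: its residue gives $c_1+c_2+c_3=0$ and its $z^1$-coefficient gives $e_1c_1+e_2c_2+e_3c_3=0$. These two identities reduce the expressions above to $a_{-1}=2c_3$ and $a_1=-4e_3c_3$, whence $a=4c_3^2=\big[(e_1-e_3)(e_2-e_3)\big]^{-2}$ and $b=-16e_3c_3^2=-4e_3a$, which is the asserted formula for $p_0^2$; rewriting via $\Delta(\LL)=16\prod_{i<j}(e_i-e_j)^2$ is then routine. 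The formulas for $p_1^2,p_2^2$ follow by the same computation after permuting the roles of $r_1,r_2,r_1r_2$ and correspondingly $e_1,e_2,e_3$. Finally, the relations $p_1^2=\alpha_1p_2^2+\alpha_2$ and $p_0^2=\beta_1p_2^2+\beta_2$ are obtained by eliminating $\wp_{\frac12\LL}$ between the three affine expressions; the resulting constants are the stated ratios of the $(e_i-e_j)$, and they lie in $\CC^*$ because the $e_i$ are pairwise distinct (equivalently $\Delta(\LL)\neq0$).

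I expect the main obstacle to be the correct bookkeeping for the constant term $b$: the leading coefficient $a$ is merely the squared residue, but $b$ requires the next expansion coefficient of each translated $1/\wp'$ together with the cancellation $\sum_i e_ic_i=0$, and one must track the three residues, their signs coming from \eqref{p0}, and the two identities $\sum_i c_i=0$, $\sum_i e_ic_i=0$ simultaneously.
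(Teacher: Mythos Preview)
Your argument is correct and follows the same overall strategy as the paper: show $p_j^2$ is an affine function of $\wp_{\frac12\LL}$ by invariance and pole order, then identify the two coefficients from the Laurent expansion at $z=0$ using \eqref{Expansion}.

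The organisation differs slightly. The paper invokes the identity $p_0=-2k_3$ with $k_3=r_1\frac{1}{\wp'}+r_2\frac{1}{\wp'}$ (which is the statement $p_{00}=0$ in disguise) and then expands $4k_3^2$ as a sum of three products of series, simplifying the resulting rational expressions in the $e_i$ by hand. You instead expand $p_0$ itself, and extract from the vanishing of $p_{00}$ the two linear relations $\sum_i c_i=0$ and $\sum_i e_ic_i=0$ among the residues $c_i=\big(2(e_i-e_j)(e_i-e_k)\big)^{-1}$; these immediately collapse $a_{-1}$ and $a_1$ to $2c_3$ and $-4e_3c_3$. Your route avoids the explicit three-term product and the subsequent algebraic simplification in the $e_i$, at the cost of keeping track of one extra Laurent coefficient of $p_{00}$; both routes use exactly the same ingredients.
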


\begin{proof}
Using the expansion in \eqref{Expansion}, we obtain for $r,\tilde{r}\in \langle r_1,r_2\rangle\setminus 1$: 
\begin{align*}\left(\frac{1}{r\wp'(z)}\right)\left(\frac{1}{\tilde{r}\wp'(z)}\right)&=\frac{1}{(6\wp(r\cdot 0)^2-\frac{1}{2}g_2)(6\wp(\tilde{r}\cdot 0)^2-\frac{1}{2}g_2)}\frac{1}{z^2}-\\
&\quad \frac{2\wp(r\cdot 0)+2\wp(\tilde{r}\cdot 0)}{(6\wp(r\cdot 0)^2-\frac{1}{2}g_2)(6\wp(\tilde{r}\cdot 0)^2-\frac{1}{2}g_2)}+O(z^2),
\end{align*}
where we use that $\wp'$ is odd, and hence the above product is even.
Hence, for $r=r_1$ and $\tilde{r}=r_2$, we get \begin{align*}
p_0(z)^2=4k_3(z)^2&=4\left(\frac{1}{r_1\wp'(z)}\right)^2+4\left(\frac{1}{r_2\wp'(z)}\right)^2+8\left(\frac{1}{r_1\wp'(z)}\right)\left(\frac{1}{r_2\wp'(z)}\right)\\
&=\frac{1}{(e_1-e_3)^2(e_2-e_3)^2}\frac{1}{z^2}-\frac{4e_3}{(e_1-e_3)^2(e_2-e_3)^2}+O(z^2).
\end{align*}
Similarly, one can show that $p_j^2$ $(j=1,2)$ is obtained from $p_0^2$ by changing $e_i$ to $e_{i-j}$ in the expansions for $i=1,2,3$.

It is clear from the definition of $\wp$ as given in \eqref{DefWp}, that about $z=0$, $\wp$ has the expansion $\wp(z)=\frac{1}{z^2}+O(z^2)$. Since $p_j^2$ has order 2 poles and is invariant under $C_2\times C_2$, we know that $p_j^2=c_1\wp_{\frac{1}{2}\LL}+c_2$ for some $c_1,c_2\in \CC$. The last statement follows now directly. This completes the proof.
\end{proof}
The symmetry groups for $\mathfrak{sl}_2$-based aLias having $C_2\times C_2$ as a subgroup of translations, are limited to $A_4\cong C_3\ltimes(C_2\times C_2)$ (for hexagonal lattices) and $C_2\times C_2$ itself. We are therefore interested in how $C_3$ acts on functions in $\mathcal{O}_{\mathbb{T}}^{C_2\times C_2}$. This is the content of the following lemma. 
\begin{Lemma}\label{GammaIdentities}
Suppose $\LL$ is homothetic to $\LL_{\omega_6}$ and $s$ acts on $\CC/\LL$ as $s\cdot z=\omega_3z$.  Then $$s\cdot p_j=p_{j+1},$$
where the subscripts are taken modulo $3$.
Furthermore, $\alpha_1=\omega_3, \beta_1=\omega_3^2$ if and only if $\LL$ is homothetic to $\LL_{\omega_6}$.
\end{Lemma}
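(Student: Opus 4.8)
The statement splits into a permutation claim and an equivalence claim, and I would handle them separately, feeding the explicit half-lattice values into both.

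For the permutation $s\cdot p_j = p_{j+1}$, the plan is to combine two inputs. First, $\wp'$ is invariant under $s\colon z\mapsto\omega_3 z$: since $\omega_3=\omega_6^2$, applying the second relation of \eqref{wpScaling2} twice gives $\wp'(\omega_3^{-1}z)=\wp'(\omega_6^{-1}(\omega_6^{-1}z))=-(-\wp'(z))=\wp'(z)$, so $s\cdot(1/\wp')=1/\wp'$ (this persists for any homothetic copy by Lemma \ref{scaling lemma}). Second, I use the conjugation of $s$ on the translations, $s r_1 s^{-1}=r_1r_2$, $s r_2 s^{-1}=r_1$, $s r_1r_2 s^{-1}=r_2$, which is exactly the $A_4$-presentation in Lemma \ref{lem:groups up to conjugation} and is checked directly from $\omega_3\cdot\tfrac12\equiv\tfrac{1+\tau}{2}$ and $\omega_3\cdot\tfrac{\tau}{2}\equiv\tfrac12$ in $\LL_{\omega_6}$ (with $\tau=\omega_6$). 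Writing $s\cdot(r\cdot f)=(srs^{-1})\cdot(s\cdot f)$ and inserting $s\cdot(1/\wp')=1/\wp'$, each term of the definition \eqref{p0} of $p_0$ is carried to the corresponding term of \eqref{p1}; explicitly $s\cdot p_0=1/\wp'-r_1r_2\cdot(1/\wp')-r_1\cdot(1/\wp')+r_2\cdot(1/\wp')=p_1$, and the analogous reindexing yields $s\cdot p_1=p_2$ and $s\cdot p_2=p_0$. This step is mechanical once the two inputs are in place.

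For the forward direction of the equivalence I would compute the $e_i$ directly. Since $\alpha_1,\beta_1$ are built from ratios of differences of the $e_i$, they are invariant under homothety (which rescales all $e_i$ by a common factor, Lemma \ref{scaling lemma}), so I may assume $\LL=\LL_{\omega_6}$. From \eqref{wpScaling2} one obtains $\wp(\omega_3 z)=\omega_3\wp(z)$, and $z\mapsto\omega_3 z$ permutes the three nonzero $2$-torsion points as $\tfrac12\mapsto\tfrac{1+\tau}{2}\mapsto\tfrac{\tau}{2}\mapsto\tfrac12$. Evaluating $\wp(\omega_3 z)=\omega_3\wp(z)$ at these points gives $e_3=\omega_3 e_1$ and $e_2=\omega_3^2 e_1$. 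Substituting into the formulas of Lemma \ref{lem:ExpansionsGammas} produces $\frac{e_1-e_3}{e_2-e_3}=-\omega_3^2$ and $\frac{e_1-e_2}{e_2-e_3}=\omega_3$, hence $\alpha_1=\omega_3$ and $\beta_1=\omega_3^2$.

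For the reverse direction, set $u=\frac{e_1-e_3}{e_2-e_3}$ and $v=\frac{e_1-e_2}{e_2-e_3}$, so that $u^2=\alpha_1$, $v^2=\beta_1$, and the identity $u-v=1$ holds automatically. Assuming $\alpha_1=\omega_3$, $\beta_1=\omega_3^2$, substituting $u=v+1$ into $u^2=\omega_3$ and using $v^2=\omega_3^2$ together with $1+\omega_3+\omega_3^2=0$ forces $v=\omega_3$, $u=-\omega_3^2$. Combined with $e_1+e_2+e_3=0$ these ratios pin down $e_1:e_2:e_3=1:\omega_3^2:\omega_3$, whence $e_1e_2+e_1e_3+e_2e_3=e_1^2(\omega_3^2+\omega_3+1)=0$, i.e. $g_2=0$; and $g_2(\LL)=0$ characterises the hexagonal lattice up to homothety ($j=0$, $\tau\sim\omega_6$), closing the equivalence. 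The main obstacle, and the only genuinely delicate point, is distinguishing $\omega_3$ from $\omega_3^2$: an argument using only the permutation of the first paragraph together with $p_1^2=\alpha_1 p_2^2+\alpha_2$ would yield merely $\alpha_1^3=1$ with $\alpha_1\neq1$, leaving a root ambiguity. It is the explicit evaluation of the $e_i$, together with the rigid constraint $u-v=1$, that selects the correct root in both directions.
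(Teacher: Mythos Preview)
Your proof is correct. The permutation argument $s\cdot p_j=p_{j+1}$ is handled exactly as in the paper: both use $s\cdot(1/\wp')=1/\wp'$ together with the conjugation relations $sr_1s^{-1}=r_1r_2$, $sr_2s^{-1}=r_1$ to track the four terms.

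For the equivalence you take a genuinely different route. The paper argues indirectly: it applies $s$ to the relation $p_1^2=\alpha_1 p_2^2+\alpha_2$ to get $\alpha_1\beta_1=1$, then observes that the coefficient $1+\alpha_1+\beta_1$ in $p_0^2+p_1^2+p_2^2=(1+\alpha_1+\beta_1)p_2^2+\cdots$ is a nonzero multiple of $g_2$, so that vanishing of $g_2$ is equivalent to $\{\alpha_1,\beta_1\}$ being the two nontrivial cube roots of unity; a final computation of $\alpha_1=e_2^2/e_1^2$ with $e_1=\omega_3 e_2$ pins down which root is which. You instead compute the half-lattice values directly from $\wp(\omega_3 z)=\omega_3\wp(z)$, obtaining $e_1:e_2:e_3=1:\omega_3^2:\omega_3$, and substitute. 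For the converse you exploit the rigid linear constraint $u-v=1$ (with $u^2=\alpha_1$, $v^2=\beta_1$) to force $v=\omega_3$, hence $g_2=0$. Your approach is more elementary and avoids the root-ambiguity detour you correctly flag; the paper's approach has the advantage that the identity $1+\alpha_1+\beta_1\propto g_2$ handles both directions of the equivalence in one stroke, at the cost of needing a separate argument to distinguish $\omega_3$ from $\omega_3^2$.
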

\begin{proof}
Assume that $\LL$ is homothetic to $\LL_{\omega_6}.$ 
Recall $\langle s\rangle\ltimes \langle r_1,r_2\rangle \cong C_3\ltimes (C_2\times C_2)\cong A_4$, where $s(z)=\omega_3z$ and $r_1(z)=z+1/2$ and $r_2(z)=z+\omega_3/2$, cf. Lemma \ref{lem:groups up to conjugation}. The following relations hold: $s r_1=r_1r_2s$ and $s r_2=r_1s.$
Using that $s\cdot \wp'=\wp'$, we compute 
$$
s\cdot  p_2=s\frac{1}{\wp'} +sr_1\frac{1}{\wp'}-sr_2\frac{1}{\wp'}-sr_1r_2\frac{1}{\wp'}=\frac{1}{\wp'} +r_1r_2\frac{1}{\wp'}-r_1\frac{1}{\wp'}-r_2\frac{1}{\wp'}= p_0.
$$
The other identities follow in the same way.\\

We now prove the second claim. We have $$ p_2^2=s\cdot  p_1^2=\alpha_1s\cdot  p_2^2+\alpha_2=\alpha_1 p_0^2+\alpha_2=\alpha_1\beta_1 p_2^2+\alpha_1\beta_2+\alpha_2$$ and we thus see that $\alpha_1\beta_1=1$ and $\alpha_1\beta_2=-\alpha_2$. By Lemma \ref{lem:ExpansionsGammas} we see $ p_0^2+ p_1^2+ p_2^2=(1+\alpha_1+\beta_1) p_2^2+\alpha_2+\beta_2$. Therefore the function $ p_0^2+ p_1^2+ p_2^2$ is constant if and only if $g_2=0$. Indeed, 
a computation shows that $1+\alpha_1+\beta_1=\frac{g_2}{2(e_2-e_3)^2}.$  We conclude that $\alpha_1$ and $\beta_1$ are distinct third roots of unity if and only if $\LL$ is homothetic to a hexagonal lattice. Using that $e_1^2+e_1e_2+e_2^2=0$ if and only if $\LL$ is homothetic to $\LL_{\omega_6}$, one can show that $\alpha_1=(2e_1+e_2)^2/(2e_2+e_1)^2=e_2^2/e_1^2.$
Now, $e_1=\omega_3 e_2$ by \eqref{wpScaling2}. Thus, we get $\alpha_1=\omega_3$ and $\beta_1=\omega_3^2$.
\end{proof}

Define \begin{align}\label{ValuesAB}
A_1=\left(\frac{e_1-e_3}{e_2-e_3}\right)^{\frac{3}{2}},\quad B_1=\left(\frac{e_1-e_2}{e_2-e_3}\right)^{\frac{3}{2}}.
\end{align} 
We note that $$\frac{e_1-e_2}{e_2-e_3}=\frac{\vartheta_{01}^4(0|\tau)}{\vartheta_{00}^4(0|\tau)}=1-\lambda(\tau),$$
where $\vartheta_{00},\vartheta_{01}$ are the Jacobi theta functions defined in Remark \ref{JacobiTheta} and $\lambda$ is the \emph{modular lambda function}, see for example \cite[Section 7.2]{rankin1977modular}.\\

Also, $\frac{e_1-e_3}{e_2-e_3}=1+\frac{e_1-e_2}{e_2-e_3},$ and hence $\frac{e_1-e_3}{e_2-e_3}=2-\lambda(\tau).$ In particular, we can express $\alpha_1$, $\beta_1$, $A_1$ and $B_1$ in terms of $\lambda$ as follows: $\alpha_1=(2-\lambda)^2$, $\beta_1=(1-\lambda)^2$, $A_1=(2-\lambda)^{3/2}$ and $B_1=(1-\lambda)^{3/2}$.\\ 

For later reference, we will record the following identities. 
\begin{Lemma}\label{GammaIdentities2}
The following identities hold: 
\begin{align*}
&\left(\frac{A_1}{\alpha_1} p_1+\frac{B_1}{\beta_1} p_0\right)\left(\frac{A_1}{\alpha_1} p_1-\frac{B_1}{\beta_1} p_0\right)= p_2^2,\\ 
&\left(\frac{A_1}{\alpha_1} p_1\mp\frac{B_1}{\beta_1} p_0\right)(A_1 p_0\pm B_1 p_1)= p_0 p_1\pm\sqrt{\alpha_2\beta_2}.
\end{align*}
Furthermore, if $\LL$ is homothetic to $\LL_{\omega_6}$, then $A_1^2=-1$ and $B_1^2=1$.
\end{Lemma}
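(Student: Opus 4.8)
The plan is to reduce all three identities to elementary manipulations of the half-period differences $e_i-e_j$, using the two quadratic relations $p_1^2=\alpha_1 p_2^2+\alpha_2$ and $p_0^2=\beta_1 p_2^2+\beta_2$ from Lemma \ref{lem:ExpansionsGammas}, together with the explicit values of $\alpha_1,\alpha_2,\beta_1,\beta_2$ recorded there and of $A_1,B_1$ from \eqref{ValuesAB}. To keep the bookkeeping transparent I would abbreviate $u=\frac{e_1-e_3}{e_2-e_3}$ and $v=\frac{e_1-e_2}{e_2-e_3}$, so that $\alpha_1=u^2$, $\beta_1=v^2$, $A_1=u^{3/2}$, $B_1=v^{3/2}$, and crucially $u-v=1$.

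For the first identity I would expand the left-hand side as a difference of squares, $\frac{A_1^2}{\alpha_1^2}p_1^2-\frac{B_1^2}{\beta_1^2}p_0^2$, noting that $\frac{A_1^2}{\alpha_1^2}=u^{-1}=\frac{e_2-e_3}{e_1-e_3}$ and $\frac{B_1^2}{\beta_1^2}=v^{-1}=\frac{e_2-e_3}{e_1-e_2}$. Substituting the two quadratic relations, the coefficient of $p_2^2$ becomes $u^{-1}\alpha_1-v^{-1}\beta_1=u-v=1$, while the constant term telescopes to $\frac{e_2-e_3}{e_1-e_3}\alpha_2-\frac{e_2-e_3}{e_1-e_2}\beta_2=0$ once the values of $\alpha_2,\beta_2$ are inserted. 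This gives $p_2^2$ exactly.

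For the second identity I would multiply out the product and collect the $p_0p_1$ cross term, whose coefficient is $\frac{A_1^2}{\alpha_1}-\frac{B_1^2}{\beta_1}=u-v=1$. The remaining part is $\frac{A_1B_1}{\alpha_1}p_1^2-\frac{A_1B_1}{\beta_1}p_0^2$; substituting the quadratic relations, the $p_2^2$-contributions cancel (both equal $A_1B_1\,p_2^2$) and one is left with the pure constant $A_1B_1\left(\frac{\alpha_2}{\alpha_1}-\frac{\beta_2}{\beta_1}\right)$. The decisive check is that the square of this constant equals $\alpha_2\beta_2$, so it is a square root of $\alpha_2\beta_2$; the two sign choices in the factors flip it, which is precisely the content of the ``$\mp/\pm$'' notation. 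The only delicate point is to pin down the overall sign consistently with the chosen branches of the $3/2$-powers $A_1=u^{3/2}$, $B_1=v^{3/2}$ and of the symbol $\sqrt{\alpha_2\beta_2}$: the product $u^{3/2}v^{3/2}$ singles out a distinguished square root, and $\sqrt{\alpha_2\beta_2}$ in the statement is to be read as that root. This sign bookkeeping, rather than any genuine computation, is the main (and minor) obstacle.

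Finally, for the hexagonal case I would invoke Lemma \ref{GammaIdentities}, which gives $\alpha_1=u^2=\omega_3$ and $\beta_1=v^2=\omega_3^2$. Combined with $u-v=1$ and the relation $1+\omega_3+\omega_3^2=0$, solving the system yields $v=\omega_3$ and $u=1+\omega_3=-\omega_3^2$. Hence $B_1^2=v^3=\omega_3^3=1$ and $A_1^2=u^3=(-\omega_3^2)^3=-\omega_3^6=-1$, as claimed.
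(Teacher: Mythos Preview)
Your proposal is correct and follows essentially the same approach as the paper's proof: expand the products, substitute the quadratic relations $p_1^2=\alpha_1 p_2^2+\alpha_2$ and $p_0^2=\beta_1 p_2^2+\beta_2$ from Lemma~\ref{lem:ExpansionsGammas}, and verify that the $p_2^2$-coefficient reduces to $u-v=1$ while the constant terms either vanish or yield a square root of $\alpha_2\beta_2$, with the sign fixed by the branch choices for $A_1,B_1$. For the hexagonal case you invoke $\alpha_1=\omega_3$, $\beta_1=\omega_3^2$ from Lemma~\ref{GammaIdentities} and solve the resulting system for $u,v$, whereas the paper uses the relation $e_1=\omega_3 e_2$ directly; these are equivalent short computations.
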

\begin{proof}
Using Lemma \ref{GammaIdentities}, we compute 
$$
\left(\frac{A_1}{\alpha_1} p_1+\frac{B_1}{\beta_1} p_0\right)\left(\frac{A_1}{\alpha_1} p_1-\frac{B_1}{\beta_1} p_0\right)=\left(\frac{A_1^2}{\alpha_1}-\frac{B_1^2}{\beta_1}\right) p_2^2+\left(\frac{A_1^2\alpha_2}{\alpha_1^2}-\frac{B_1^2\beta_2}{\beta_1^2}\right).
$$
Another computation shows that $$\frac{A_1^2\alpha_2}{\alpha_1^2}=\frac{4}{(e_1-e_3)(e_2-e_3)(e_1-e_2)}=\frac{B_1^2\beta_2}{\beta_1^2}$$ 
and $$\frac{A_1^2}{\alpha_1}-\frac{B_1^2}{\beta_1}=\frac{e_1-e_3}{e_2-e_3}-\frac{e_1-e_2}{e_2-e_3}=1.$$
This proves the first claim. The other identities follow by similar calculations, where we note that the sign of $\sqrt{\alpha_2\beta_2}$ comes from the choice of the square roots in the definition of $A_1$ and $B_1$. Notice that $\sqrt{\alpha_2\beta_2}\neq 0$ since $\Delta(\LL)=16(e_1-e_3)^2(e_2-e_3)^2(e_1-e_2)^2\neq 0$ for any lattice $\LL$. The final claim follows from the definition of $A_1$ and $B_1$, cf. \eqref{ValuesAB}, and using that in this case, $e_1=\omega_3e_2$.
\end{proof}
We will now define two functions which arise in the construction of an intertwiner $\Psi$. The intertwiner will be used to construct a basis for the $C_2\times C_2$-aLia with $g(T/C_2\times C_2)=1$. 
Define \begin{align}\label{xiDef}
\xi_{\pm}:=\sqrt{\frac{A_1}{\alpha_1} p_1\pm\frac{B_1}{\beta_1} p_0}=\sqrt{\sqrt{\frac{e_2-e_3}{e_1-e_3}} p_1\pm\sqrt{\frac{e_2-e_3}{e_1-e_2}} p_0},
\end{align} 
where we choose the branches in such way that $\xi_-\xi_+=p_2$ (having Lemma \ref{GammaIdentities2} in mind).
Notice that $\xi_{\pm}$ are not functions on any complex torus. \\
 
We will now turn our attention to finding an intertwiner $\Ad(\Phi)$ as we have done for $\Gamma=C_N$, cf. Section \ref{subsectionCNDN}. Recall that we constructed an $SL_2(\CC)$-valued map $\Phi$ such that $\Phi(r\cdot z)=\delta(r)\Phi(z)$, where $r\in C_N$ and $\delta$ is a monomorphism $C_N\rightarrow SL_2(\CC)$. Composing $\Phi$ with $\Ad$ yields a map equivariant with respect to the action of $C_N$ on a torus $T$ and $\mf{sl}_2$. Let us now argue that a similar construction, that is, obtaining an intertwiner via conjugation of some $C_N$-equivariant $SL_2(\CC)$-valued map, cannot be carried out for $\Gamma=C_2\times C_2$. A necessary condition for carrying out this construction, is that $\delta:\Gamma\rightarrow SL_2(\CC)$ should be a monomorphism. However,
by considering the character table of $C_2\times C_2$, one concludes that there does not exist a faithful representation $C_2\times C_2\rightarrow SL_2(\CC)$, and hence there is no monomorphism $\delta$ such that the above condition holds for nonzero $\Phi$. However,  there does exist a faithful representation $C_2\times C_2\rightarrow \A(\mf{sl}_2)$, which is the context in which we will look for an intertwiner below.\\

Define $\rho: C_2\times C_2\rightarrow \A(\mf{sl}_2)$ by 
\begin{align} \label{def:C2xC2action}
\rho(r_1)\begin{pmatrix}
a & b\\
c &-a
\end{pmatrix}=\begin{pmatrix}
a & -b\\
-c &-a
\end{pmatrix}, &\quad
\rho(r_2)\begin{pmatrix}
a & b\\
c &-a
\end{pmatrix}=\begin{pmatrix}
-a & -c\\
-b &a
\end{pmatrix}.
\end{align}
\begin{Remark}\label{rem:Q8}
We have used the quaternions $Q_8$ as a double cover of $C_2\times C_2$ in \eqref{def:C2xC2action}. The matrices $R_1=\begin{pmatrix}
i& 0\\
0&-i
\end{pmatrix}$ and $R_2=\begin{pmatrix}
0& 1\\
-1&0
\end{pmatrix}$ generate $Q_8$ inside $SL_2(\CC)$, which define a homomorphism $\zeta:Q_8\rightarrow SL_2(\CC)$. Then $\Ad(\zeta(Q_8))\cong C_2\times C_2$ inside $\A(\mf{sl}_2)$.
\end{Remark}

Introduce the following matrix, which will play an important role in the construction of the intertwiner:
\begin{align}\label{MatrixP}
\Omega=\begin{pmatrix}
\sqrt{\frac{A_1}{\alpha_1} p_1-\frac{B_1}{\beta_1} p_0}&\frac{1}{2\sqrt{\alpha_2\beta_2}}(A_1 p_0-B_1 p_1)\sqrt{\frac{A_1}{\alpha_1} p_1+\frac{B_1}{\beta_1} p_0}\\
\sqrt{\frac{A_1}{\alpha_1} p_1+\frac{B_1}{\beta_1} p_0}&\frac{1}{2\sqrt{\alpha_2\beta_2}}(A_1 p_0+B_1 p_1)\sqrt{\frac{A_1}{\alpha_1} p_1-\frac{B_1}{\beta_1} p_0}
\end{pmatrix},
\end{align}
where we recall that $\alpha_j,\beta_j$ are defined in Lemma \ref{GammaIdentities} and $A_1,B_1$ in \eqref{ValuesAB}.\\

The following two simple lemmata establish that $\Ad(\Omega)$ is a $C_2\times C_2$-equivariant automorphism of $\mathfrak{sl}_2\otimes_{\CC}\mathcal{O}_{\mathbb{T}}$.
First of all, conjugation with $\Omega$ is an automorphism of $\mathfrak{sl}_2\otimes_{\CC}\mathcal{O}_{\mathbb{T}}$: 

\begin{Lemma}\label{lem:def:M}
 We have $\Psi:=\Ad(\Omega)\in \A_{\mathcal{O}_{\mathbb{T}}}(\mathfrak{sl}_2\otimes_{\CC}\mathcal{O}_{\mathbb{T}}).$ Explicitly, the matrix of $\Psi$ with respect to the basis $\mathcal{B}=\{h,e,f\}$, is given by
\begin{align*}[\Psi]_{\mathcal{B}}=\begin{pmatrix}
\frac{1}{\sqrt{\alpha_2\beta_2}} p_0 p_1 & - p_2 & \frac{1}{4\alpha_2\beta_2}(A_1^2 p_0^2-B_1^2 p_1^2) p_2\\
\frac{1}{\sqrt{\alpha_2\beta_2}}(B_1 p_1-A_1 p_0) p_2 & \frac{A_1}{\alpha_1} p_1-\frac{B_1}{\beta_1} p_0 & \frac{1}{4\alpha_2\beta_2}(B_1 p_1-A_1 p_0)\tilde{p}_-\\
\frac{1}{\sqrt{\alpha_2\beta_2}}(A_1 p_0+B_1 p_1) p_2 & -\frac{A_1}{\alpha_1} p_1-\frac{B_1}{\beta_1} p_0 &\frac{1}{4\alpha_2\beta_2} (A_1 p_0+B_1 p_1)\tilde{p}_+
\end{pmatrix},
\end{align*}
where $\tilde{p}_{\pm}:=p_0 p_1\pm \sqrt{\alpha_2\beta_2}$.
\end{Lemma}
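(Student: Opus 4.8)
The plan is to reduce everything to the classical formula for the adjoint action of $\SL_2$ on $\mf{sl}_2$ and then exploit the fact that $\Ad$ is quadratic in the matrix entries to cancel the square roots hidden inside $\Omega$. Writing a general $M=\begin{pmatrix} a & b\\ c& d\end{pmatrix}$, a direct computation of $MhM^{-1}$, $MeM^{-1}$, $MfM^{-1}$ and re-expansion in the basis $\mathcal{B}=\{h,e,f\}$ gives
$$[\Ad(M)]_{\mathcal{B}}=\frac{1}{\det M}\begin{pmatrix} ad+bc & -ac & bd\\ -2ab & a^2 & -b^2\\ 2cd & -c^2 & d^2\end{pmatrix}.$$
First I would record the entries of $\Omega$ as $a=\xi_-$, $c=\xi_+$, $b=\frac{1}{2\sqrt{\alpha_2\beta_2}}(A_1 p_0-B_1 p_1)\xi_+$ and $d=\frac{1}{2\sqrt{\alpha_2\beta_2}}(A_1 p_0+B_1 p_1)\xi_-$, and substitute them into this formula.

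Next I would compute the determinant and confirm $\Psi\in\A_{\ot}(\mf{sl}_2\otimes_{\CC}\ot)$. Since $ad-bc=\frac{1}{2\sqrt{\alpha_2\beta_2}}\bigl[(A_1 p_0+B_1 p_1)\xi_-^2-(A_1 p_0-B_1 p_1)\xi_+^2\bigr]$ and $\xi_\pm^2=\frac{A_1}{\alpha_1}p_1\pm\frac{B_1}{\beta_1}p_0$, expanding gives $ad-bc=\frac{A_1 B_1}{\sqrt{\alpha_2\beta_2}}\bigl(\frac{p_1^2}{\alpha_1}-\frac{p_0^2}{\beta_1}\bigr)$; feeding in $p_1^2=\alpha_1 p_2^2+\alpha_2$ and $p_0^2=\beta_1 p_2^2+\beta_2$ from Lemma \ref{lem:ExpansionsGammas} collapses the $p_2^2$ terms and leaves a nonzero constant, which the branch conventions of \eqref{xiDef} normalise to $\det\Omega=1$. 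Crucially, every entry of the displayed matrix is, up to the constant determinant, a product of exactly two entries of $\Omega$, and each such product lies in $\ot$: the diagonal products reproduce $\xi_\pm^2$, which are $\CC$-linear combinations of $p_0,p_1$, while every off-diagonal product contains the factor $\xi_+\xi_-=p_2$. Hence $\Ad(\Omega)$ maps $\mf{sl}_2\otimes_{\CC}\ot$ into itself, and invertibility follows from $\det\Omega\neq 0$; this proves the first assertion.

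With $\det\Omega=1$ the remaining task is to match the nine entries. The first two columns are immediate: $ad+bc$ simplifies via $\frac{A_1^2}{\alpha_1}-\frac{B_1^2}{\beta_1}=1$ to $\frac{1}{\sqrt{\alpha_2\beta_2}}p_0 p_1$, the entries $-2ab$ and $2cd$ collapse using $\xi_+\xi_-=p_2$, and $-ac=-p_2$, $a^2=\xi_-^2$, $-c^2=-\xi_+^2$ are read off directly. For the third column, $bd=\frac{1}{4\alpha_2\beta_2}(A_1^2 p_0^2-B_1^2 p_1^2)p_2$ again after $\xi_+\xi_-=p_2$, while the two remaining entries $-b^2$ and $d^2$ require the product identities of Lemma \ref{GammaIdentities2} in the form $(A_1 p_0-B_1 p_1)\xi_+^2=\tilde{p}_-$ and $(A_1 p_0+B_1 p_1)\xi_-^2=\tilde{p}_+$, which turn $-b^2$ and $d^2$ into $\frac{1}{4\alpha_2\beta_2}(B_1 p_1-A_1 p_0)\tilde{p}_-$ and $\frac{1}{4\alpha_2\beta_2}(A_1 p_0+B_1 p_1)\tilde{p}_+$ respectively.

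I expect the main obstacle to be conceptual rather than computational: $\Omega$ is not itself an $\ot$-valued matrix, since $\xi_\pm$ live on a cover of the torus, so one must argue carefully that all irrationalities cancel. They do so for exactly two reasons — $\xi_\pm^2$ linearises in $p_0,p_1$ by Lemma \ref{lem:ExpansionsGammas}, and the mixed product $\xi_+\xi_-$ equals $p_2$ by the branch choice in \eqref{xiDef} — and because $\Ad$ is homogeneous of degree two in the entries, these are the only combinations that occur. A secondary, purely bookkeeping nuisance is fixing the signs of the square roots defining $A_1$, $B_1$ and $\sqrt{\alpha_2\beta_2}$ consistently so that $\det\Omega=+1$.
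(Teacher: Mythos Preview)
Your proposal is correct and follows essentially the same approach as the paper: write down the general formula for $[\Ad(M)]_{\mathcal{B}}$, substitute the entries of $\Omega$, and simplify using the identities of Lemmas \ref{lem:ExpansionsGammas} and \ref{GammaIdentities2}, relying throughout on the fact that $\Ad$ is quadratic so that only the combinations $\xi_\pm^2$ and $\xi_+\xi_-=p_2$ occur. The only minor difference is that you verify $\det\Omega$ is constant by expanding via Lemma \ref{lem:ExpansionsGammas}, whereas the paper reaches $\det\Omega=1$ in one line by applying the product identity of Lemma \ref{GammaIdentities2} directly to $\xi_-^2(A_1p_0+B_1p_1)$ and $\xi_+^2(A_1p_0-B_1p_1)$; the latter also makes the sign bookkeeping you flag at the end automatic, since the sign of $\sqrt{\alpha_2\beta_2}$ in that lemma is already tied to the choices of $A_1,B_1$.
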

\begin{proof}
We will first argue that $\det(\Omega)=1$. Recall $\xi_{\pm}=\sqrt{\frac{A_1}{\alpha_1} p_1\pm\frac{B_1}{\beta_1} p_0}$. Using Lemma \ref{GammaIdentities2}, we compute
\begin{align*}
\det(\Omega)&=\frac{1}{2\sqrt{\alpha_2\beta_2}}(\xi_-^2(A_1 p_0+B_1 p_1)-\xi_+^2(A_1 p_0-B_1p_1))\\
&=\frac{1}{2\sqrt{\alpha_2\beta_2}}(( p_0 p_1+\sqrt{\alpha_2\beta_2})-( p_0 p_1-\sqrt{\alpha_2\beta_2}))\\
&=1.
\end{align*}
Let $m=\begin{pmatrix}a&b\\c&d\end{pmatrix}\in SL_2(R)$, where $R$ is some unital ring. The matrix of $\mathrm{Ad}(m)$ with respect to $\mathcal{B}$ is given by $$[\mathrm{Ad}(m)]_{\mathcal{B}}=\begin{pmatrix}bc+ad&-ac&bd\\
-2ab&a^2&-b^2\\
2cd&-c^2&d^2\end{pmatrix}\in SL_3(R).$$
Take $m=\Omega$ and after some work, using Lemma \ref{GammaIdentities2}, one arrives at the matrix in the claim. Since $\det(\Omega)=1$, the same holds for $\Psi $ and we see that all the entries are in $\mathcal{O}_{\mathbb{T}}$. This shows that $\Psi\in \A_{\mathcal{O}_{\mathbb{T}}}(\mathfrak{sl}_2\otimes_{\CC}\mathcal{O}_{\mathbb{T}}).$
\end{proof}

\begin{Lemma}\label{EquivarianceProperty}
The map $\Psi$ intertwines the actions of $C_2\times C_2$ on $T$ and $\mf{sl}_2$: $\Psi (r\cdot z)=\rho(r)\Psi (z)$ for all $r\in C_2\times C_2$ and $z\in \TT$.
\end{Lemma}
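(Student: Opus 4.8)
The plan is to reduce the statement to a check on the two generators $r_1,r_2$ of $C_2\times C_2$. Once $\Psi(r_i\cdot z)=\rho(r_i)\Psi(z)$ is established for $i=1,2$, the general case is automatic from the homomorphism property: for instance $\Psi(r_1r_2\cdot z)=\Psi(r_1\cdot(r_2\cdot z))=\rho(r_1)\Psi(r_2\cdot z)=\rho(r_1)\rho(r_2)\Psi(z)=\rho(r_1r_2)\Psi(z)$, and the identity element is trivial. Rather than argue with $\Omega$ directly, whose entries $\xi_{\pm}=\sqrt{\tfrac{A_1}{\alpha_1}p_1\pm\tfrac{B_1}{\beta_1}p_0}$ are square roots and hence not single-valued on $T$, I would work with the honest $\mathcal{O}_{\mathbb{T}}$-valued matrix $[\Psi]_{\mathcal{B}}$ recorded in Lemma \ref{lem:def:M}, all of whose entries are genuine functions in $\mathcal{O}_{\mathbb{T}}$, polynomial in $p_0,p_1,p_2$ and the constants. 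This sidesteps all branch bookkeeping.

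First I would write down the two matrices to be compared. From \eqref{def:C2xC2action} one reads off, in the basis $\mathcal{B}=\{h,e,f\}$, that $\rho(r_1)=\mathrm{diag}(1,-1,-1)$ (as $h\mapsto h$, $e\mapsto-e$, $f\mapsto-f$) and that $\rho(r_2)$ sends $h\mapsto-h$, $e\mapsto-f$, $f\mapsto-e$, so $[\rho(r_2)]_{\mathcal{B}}=\left(\begin{smallmatrix}-1&0&0\\0&0&-1\\0&-1&0\end{smallmatrix}\right)$. On the other side, evaluating an entry at $r_i\cdot z$ amounts to the substitution dictated by the $C_2\times C_2$-action on the $p_j$: for $r_1$ one substitutes $p_0\mapsto-p_0,\ p_1\mapsto-p_1,\ p_2\mapsto p_2$, and for $r_2$ one substitutes $p_0\mapsto-p_0,\ p_1\mapsto p_1,\ p_2\mapsto-p_2$.

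Then I would match the nine entries of $[\Psi]_{\mathcal{B}}(r_i\cdot z)$ against $[\rho(r_i)]_{\mathcal{B}}\,[\Psi]_{\mathcal{B}}(z)$. For $r_1$ the substitution fixes $p_0p_1$, $p_0^2$, $p_1^2$, $p_2$ and $\tilde p_{\pm}=p_0p_1\pm\sqrt{\alpha_2\beta_2}$, while it negates the linear forms $A_1p_0\pm B_1p_1$ and $\tfrac{A_1}{\alpha_1}p_1\pm\tfrac{B_1}{\beta_1}p_0$; entry-by-entry this is exactly the sign pattern $\mathrm{diag}(1,-1,-1)$ applied to the rows. For $r_2$ the extra features are that the substitution interchanges $A_1p_0+B_1p_1\leftrightarrow B_1p_1-A_1p_0$, sends $p_0p_1\mapsto-p_0p_1$ and $p_2\mapsto-p_2$, and hence swaps $\tilde p_+\leftrightarrow-\tilde p_-$; tracing these through shows that rows $2$ and $3$ of $[\Psi]_{\mathcal{B}}$ are interchanged and every row is negated, which is precisely left-multiplication by $[\rho(r_2)]_{\mathcal{B}}$. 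No fresh algebra is needed: the identities of Lemma \ref{GammaIdentities2} are already incorporated into the entries of $[\Psi]_{\mathcal{B}}$, so the verification is purely a sign-and-substitution match.

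The main obstacle is conceptual rather than computational: one must resist proving the cleaner-looking relation $\Omega(r_i\cdot z)=\pm R_i\Omega(z)$ at the level of $\Omega$, because $\Omega$ has square-root entries and that identity holds only after a consistent choice of branches (the relative branch being pinned down by $\xi_-\xi_+=p_2$). Passing to $\Psi=\Ad(\Omega)$, where $\Ad$ quotients out the central sign ambiguity coming from the double cover $Q_8$ of Remark \ref{rem:Q8} and where all entries are single-valued functions on $\mathbb{T}$, removes this difficulty; the only bookkeeping that genuinely requires attention is the $r_2$-induced swap $\tilde p_+\leftrightarrow-\tilde p_-$ together with the interchange of the two off-diagonal columns.
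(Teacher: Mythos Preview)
Your proof is correct and follows essentially the same approach as the paper's: compute $[\rho(r_1)]_{\mathcal{B}}=\diag(1,-1,-1)$ and $[\rho(r_2)]_{\mathcal{B}}=\left(\begin{smallmatrix}-1&0&0\\0&0&-1\\0&-1&0\end{smallmatrix}\right)$, then verify the equivariance for the two generators by tracing the substitutions $p_j\mapsto r_i\cdot p_j$ through the entries of $[\Psi]_{\mathcal{B}}$ from Lemma~\ref{lem:def:M}. The paper condenses this to ``it is straightforward to verify the claim for $r_1$ and $r_2$'', whereas you carry out the entry-level bookkeeping explicitly; your added remark about working with $\Psi=\Ad(\Omega)$ rather than $\Omega$ to avoid branch ambiguities in the $\xi_{\pm}$ is a useful clarification the paper leaves implicit. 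One small slip: in your closing paragraph you refer to ``the interchange of the two off-diagonal columns'', but the relevant operation under left-multiplication by $[\rho(r_2)]_{\mathcal{B}}$ is the interchange (with sign) of \emph{rows} $2$ and $3$, as you correctly state earlier.
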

\begin{proof}
Using that $$\left[\mathrm{Ad}\begin{pmatrix}
i& 0\\
0&-i
\end{pmatrix}\right]_{\mathcal{B}}=\begin{pmatrix}
1&0&0\\
0&-1&0\\
0&0&-1
\end{pmatrix},\quad \left[\mathrm{Ad}\begin{pmatrix}
0& 1\\
-1&0
\end{pmatrix}\right]_{\mathcal{B}}=\begin{pmatrix}
-1&0&0\\
0&0&-1\\
0&-1&0
\end{pmatrix},$$ 
it is straightforward to verify the claim for $r_1$ and $r_2$, and hence for all $r\in C_2\times C_2$.
\end{proof}
\begin{Remark}
Notice that in the definition of $\Omega$ in \eqref{MatrixP}, the choice of signs of the constants $A_1$ and $B_1$ does not matter. Indeed, one could verify that changing the signs corresponds to compose $\Psi=\Ad(\Omega)$ with commuting (order 2) elements of $\A(\mf{sl}_2)$, which in turn also commute with $\rho(r)$, for any $r\in C_2\times C_2$.
\end{Remark}
Define the following elements of $\mathfrak{sl}_2\otimes_{\CC}\mathcal{O}_{\mathbb{T}}$: 
\begin{align}\label{C2xC2Generators}
h'=\Psi h,\quad e'=\Psi e, \quad f'=\Psi f. 
\end{align}
By the equivariance property of $\Psi $, cf. Lemma \ref{EquivarianceProperty}, the elements $h',e'$ and $f'$ are invariant with respect to the action of $C_2\times C_2$. In fact, they will form a basis of the $C_2\times C_2$-aLia as we will see in the following theorem.
\begin{Theorem}\label{C2xC2aLia}
Let $\rho:C_2\times C_2\rightarrow \A(\mf{sl}_2)$ and $\sigma:C_2\times C_2\rightarrow \A(T)$ be monomorphisms and assume that $g(T/\sigma(C_2\times C_2))=1$. Then
$$(\mathfrak{sl}_2\otimes_{\CC}\mathcal{O}_{\mathbb{T}})^{\rho\otimes\tilde{\sigma}(C_2\times C_2)}\cong \mf{sl}_2\otimes_{\CC}\ot^{\tilde{\sigma}(C_2\times C_2)}.$$
A normal form is given by $$(\mathfrak{sl}_2\otimes_{\CC}\mathcal{O}_{\mathbb{T}})^{\rho\otimes \tilde{\sigma}(C_2\times C_2)}=\CC\langle h',e',f' \rangle \otimes_{\CC}\CC[\wp_{\frac{1}{2}\LL},\wp_{\frac{1}{2}\LL}'] $$ with $$[h',e']=2e',\quad [h',f']=-2f',\quad [e',f']=h',$$ where $h',e',f'$ are defined in \eqref{C2xC2Generators}.
\end{Theorem}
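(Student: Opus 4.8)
The plan is to recognise this statement as the instance of the first part of Proposition \ref{Mauto} in which the whole group is the normal subgroup of translations: take $G=1$ and $K=\Gamma=C_2\times C_2\subset t(T)$, and let $\tilde\rho$ be the trivial homomorphism, whose kernel is then indeed all of $K$. With this choice the intertwining condition \eqref{Mintertwiner} collapses to $\Psi(r\cdot z)=\rho(r)\Psi(z)$, which is exactly Lemma \ref{EquivarianceProperty}, and the requirement that $\Psi$ be inner is supplied by Lemma \ref{lem:def:M} (there $\det\Omega=1$ and every entry of $[\Psi]_{\mathcal{B}}$ lies in $\ot$, so $\Psi=\Ad(\Omega)\in\A_{\ot}(\mathfrak{sl}_2\otimes_\CC\ot)$). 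Hence the isomorphism $(\mathfrak{sl}_2\otimes_\CC\ot)^{C_2\times C_2}\cong\mathfrak{sl}_2\otimes_\CC\ot^{\tilde\sigma(C_2\times C_2)}$ follows at once, and it remains only to extract the explicit normal form.

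For the explicit generators I would first check invariance of $h'=\Psi h$, $e'=\Psi e$, $f'=\Psi f$ by a one-line computation: writing $\Psi x$ for the $\mathfrak{sl}_2$-valued function $z\mapsto\Psi(z)x$, the diagonal action gives $(r\cdot\Psi x)(z)=\rho(r)\Psi(r^{-1}\cdot z)x=\rho(r)\rho(r)^{-1}\Psi(z)x=\Psi(z)x$, where I used Lemma \ref{EquivarianceProperty} in the form $\Psi(r^{-1}\cdot z)=\rho(r)^{-1}\Psi(z)$. Thus $h',e',f'\in\mathfrak{A}:=(\mathfrak{sl}_2\otimes_\CC\ot)^{C_2\times C_2}$. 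Because $\Psi$ is an $\ot$-linear automorphism we have $\CC\langle h',e',f'\rangle\otimes_\CC\ot=\Psi(\mathfrak{sl}_2\otimes_\CC\ot)=\mathfrak{sl}_2\otimes_\CC\ot$, so $\{h',e',f'\}$ is a free $\ot$-basis and every element writes uniquely as $g_1h'+g_2e'+g_3f'$ with $g_i\in\ot$. Using the compatibility $r\cdot(g\,x)=\tilde\sigma(r)(g)\,(r\cdot x)$ of the module structure with the group action together with the invariance just shown, such an element is $C_2\times C_2$-invariant if and only if every $g_i\in\ot^{C_2\times C_2}$. This gives $\mathfrak{A}=\CC\langle h',e',f'\rangle\otimes_\CC\ot^{C_2\times C_2}$, and Lemma \ref{InvariantsC2xC2} identifies $\ot^{C_2\times C_2}=\CC[\wp_{\frac{1}{2}\LL},\wp_{\frac{1}{2}\LL}']$.

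Finally, since $\Psi=\Ad(\Omega)$ is a Lie algebra automorphism, the brackets transport verbatim from the standard triple: $[h',e']=\Psi[h,e]=2e'$, $[h',f']=-2f'$, and $[e',f']=\Psi[e,f]=h'$. In the language of Definition \ref{def:NormalForm} this is the normal form with structure function $p=1$, which is precisely the current-algebra bracket; this both yields the displayed normal form and makes the isomorphism $\mathfrak{A}\cong\mathfrak{sl}_2\otimes_\CC\ot^{\tilde\sigma(C_2\times C_2)}$ explicit.

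The genuine difficulty has been front-loaded entirely into the construction of $\Omega$ and the verification of Lemmas \ref{lem:def:M} and \ref{EquivarianceProperty}; conditional on these the argument above is purely formal. The one conceptual point to flag is that the branched quantities $\xi_\pm$ entering $\Omega$ do \emph{not} individually belong to $\ot$, so it is essential to work with $\Psi=\Ad(\Omega)$ rather than with $\Omega$ itself: the square roots must cancel in every entry of $[\Psi]_{\mathcal{B}}$, and it is exactly the identities of Lemma \ref{GammaIdentities2} that force this cancellation, thereby guaranteeing $\Psi\in\A_{\ot}(\mathfrak{sl}_2\otimes_\CC\ot)$ and legitimising the free-module argument used above.
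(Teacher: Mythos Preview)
Your proposal is correct and follows essentially the same route as the paper: both invoke Proposition \ref{Mauto} with $K=C_2\times C_2$ (and trivial $G$), using Lemma \ref{lem:def:M} for the inner-automorphism property, Lemma \ref{EquivarianceProperty} for the intertwining condition, and Lemma \ref{InvariantsC2xC2} to identify the invariant ring. You spell out in more detail the invariance check and the free-module argument that the paper leaves implicit in the proof of Proposition \ref{Mauto}, and your closing remark about the branched $\xi_\pm$ is a useful clarification, but there is no substantive difference in approach.
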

\begin{proof}
By Lemma \ref{InvariantsC2xC2} , we know that $\mathcal{O}_{\mathbb{T}}^{\tilde{\sigma}(C_2\times C_2)}=\CC[\wp_{\frac{1}{2}\Lambda},\wp_{\frac{1}{2}\Lambda}']$. Now, by Lemma \ref{EquivarianceProperty}, we are in a position to apply Proposition \ref{Mauto} to $\Psi=\Ad(\Omega) $ with $K=C_2\times C_2$. Hence 
$$(\mathfrak{sl}_2\otimes_{\CC}\mathcal{O}_{\mathbb{T}})^{\rho\otimes\tilde{\sigma}(C_2\times C_2)}=\CC\langle h',e',f' \rangle \otimes_{\CC}\CC[\wp_{\frac{1}{2}\LL},\wp_{\frac{1}{2}\LL}'] .$$ The Lie structure is the same as the Lie structure of $\CC\langle e,f,h\rangle$ and is therefore given by  $$[h',e']=2e',\quad [h',f']=-2f',\quad [e',f']=h'.$$ In particular, we see that $(\mathfrak{sl}_2\otimes_{\CC}\mathcal{O}_{\mathbb{T}})^{\rho\otimes \tilde{\sigma}(C_2\times C_2)}\cong \mf{sl}_2\otimes_{\CC}\CC[\wp_{\frac{1}{2}\LL},\wp_{\frac{1}{2}\LL}']= \mf{C}_{\frac{1}{2}\LL}.$
\end{proof}
Explicitly, the generators of the $C_2\times C_2$-aLia in normal form, are given by \begin{align*}
h'&=\frac{1}{\sqrt{\alpha_2\beta_2}}\begin{pmatrix}
 p_0 p_1 & -\left(\frac{e_1-e_3}{e_2-e_3}\right)^{\frac{3}{2}} p_0 p_2+\left(\frac{e_1-e_2}{e_2-e_3}\right)^{\frac{3}{2}} p_1 p_2\\
\left(\frac{e_1-e_3}{e_2-e_3}\right)^{\frac{3}{2}} p_0 p_2+\left(\frac{e_1-e_2}{e_2-e_3}\right)^{\frac{3}{2}} p_1 p_2 & - p_0 p_1
\end{pmatrix},\\
 e'&=\begin{pmatrix}
- p_2&\sqrt{\frac{e_2-e_3}{e_1-e_3}} p_1-\sqrt{\frac{e_2-e_3}{e_1-e_2}} p_0\\
-\sqrt{\frac{e_2-e_3}{e_1-e_3}} p_1-\sqrt{\frac{e_2-e_3}{e_1-e_2}} p_0 &  p_2
\end{pmatrix},\\
f'&=\frac{1}{4\alpha_2\beta_2}\begin{pmatrix}
\left(\frac{\Delta(\LL)}{16(e_2-e_3)^4} p_2^2+\frac{12e_1}{(e_2-e_3)^4}\right) p_2& \left(\left(\frac{e_1-e_2}{e_2-e_3}\right)^{\frac{3}{2}} p_1-\left(\frac{e_1-e_3}{e_2-e_3}\right)^{\frac{3}{2}} p_0\right)\tilde{p}_-\\
 \left(\left(\frac{e_1-e_3}{e_2-e_3}\right)^{\frac{3}{2}} p_0+\left(\frac{e_1-e_2}{e_2-e_3}\right)^{\frac{3}{2}} p_1\right)\tilde{p}_+ & -\left(\frac{\Delta(\LL)}{16(e_2-e_3)^4} p_2^2+\frac{12e_1}{(e_2-e_3)^4}\right) p_2\end{pmatrix}.
\end{align*}
where we recall that $\tilde{p}_{\pm}=p_0 p_1\pm\sqrt{\alpha_2\beta_2}$.
We could rewrite/simplify the term $\frac{\Delta(\LL)}{16(e_2-e_3)^4} p_2^2$ to $\wp_{\frac{1}{2}\LL}-4e_1$ due to Corollary \ref{lem:ExpansionsGammas}.
\begin{Remark}\label{rem:LL}
The aLia $\mf{A}=(\mathfrak{sl}_2\otimes_{\CC}\mathcal{O}_{\mathbb{T}})^{\rho\otimes\tilde{\sigma}(C_2\times C_2)}$ is the Lie algebra of holomorphic $K=C_2\times C_2$-equivariant maps from a punctured torus to $\mf{sl}_2$. In \cite{carey1993landau}, the authors study holomorphic $K$-equivariant maps from some $K$-invariant subset of a complex torus $\CC/\ZZ\oplus\ZZ\tau$ to $SL_2(\CC)$. They give expansions of the functions that appear in these matrices in terms of $\lambda_1,\lambda_2$ and $\lambda_3$, which are related to our functions $p_0,p_1$ and $p_2$ defined in \eqref{p2}-\eqref{p0}. With these functions, they construct $K$-equivariant maps with values in $\mf{sl}_2$ which they use in the context of integrable systems, in particular in an algebraic description of the Landau-Lifshitz hierarchy. For example, a Lax pair, which is the Lax pair found in \cite{sklyanin1979complete}, is constructed for the Landau-Lifshitz equation, using elements of $\mf{A}$. Furthermore, they present a result about a factorisation of smooth $K$-equivariant loops $\gamma:\mathcal{C}\rightarrow SL_2(\CC)$, where $\mathcal{C}$ is a union of disjoint circles around the points $0,1/2,\tau/2,(1+\tau)/2$, analogues to Birkhoff factorisation.
\end{Remark}
 
The last symmetry group in our classification is $\Gamma=A_4$, which we will discuss now.
\begin{Theorem}\label{A4aLia}
Let $T\cong T_{\omega_6}$ and $\rho:A_4\rightarrow \A(\mf{sl}_2)$ and $\sigma:A_4\rightarrow \A(T)$ be monomorphisms. Let $\mathbb{T}=T\setminus A_4\cdot \{0\}.$ There is the following isomorphism of Lie algebras: $$(\mathfrak{sl}_2\otimes_{\CC} \mathcal{O}_{\mathbb{T}})^{\rho\otimes \tilde{\sigma}(A_4)}\cong\mathfrak{O},$$  where $\mathfrak{O}$ is the Onsager algebra. A normal form is given by $$(\mathfrak{sl}_2\otimes_{\CC} \mathcal{O}_{\mathbb{T}})^{\rho\otimes \tilde{\sigma}(A_4)}=\CC\langle e'\otimes \wp_{\frac{1}{2}\LL_{\omega_6}},f'\otimes \wp_{\frac{1}{2}\LL_{\omega_6}}^2,h'\rangle \otimes_{\CC} \CC[\wp_{\frac{1}{2}\LL_{\omega_6}}'],$$
where $e',f',h'$ are the generators of the $C_2\times C_2$-aLia, cf. Theorem \ref{C2xC2aLia}. 
\end{Theorem}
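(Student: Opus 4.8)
The plan is to exploit the semidirect decomposition $A_4\cong C_3\ltimes(C_2\times C_2)=G\ltimes K$ and to bootstrap the $A_4$-normal form from the $K$-normal form of Theorem~\ref{C2xC2aLia}. By Lemma~\ref{lem:IsomALias} we may fix convenient monomorphisms: take $\sigma$ so that $K$ acts by $r_1(z)=z+\tfrac12$, $r_2(z)=z+\tfrac{\omega_3}{2}$ and $s(z)=\omega_3 z$ generates $G$, and take $\rho$ restricting to \eqref{def:C2xC2action} on $K$ (such a $\rho$ exists and is unique up to conjugacy by Lemma~\ref{lem:Klein}). Theorem~\ref{C2xC2aLia} then provides
$$\mf{K}:=(\mf{sl}_2\otimes_{\CC}\ot)^{K}=\CC\langle h',e',f'\rangle\otimes_{\CC}\CC[\wp_{\frac12\LL_{\omega_6}},\wp_{\frac12\LL_{\omega_6}}'],$$
with $[h',e']=2e'$, $[h',f']=-2f'$ and $[e',f']=h'$. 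Since $K\lhd A_4$, Lemma~\ref{NormalSubgroupAlia} reduces the problem to computing the $C_3$-invariants of $\mf{K}$.

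The computational heart of the argument is the single identity $s\cdot h'=h'$. To prove it I would use the explicit matrix $[\Psi]_{\mathcal B}$ of $\Psi=\Ad(\Omega)$ from Lemma~\ref{lem:def:M}, substitute the cyclic relabelling $s\cdot p_j=p_{j+1}$ of Lemma~\ref{GammaIdentities}, and simplify using the hexagonal specialisations $\alpha_1=\omega_3$, $\beta_1=\omega_3^2$ (Lemma~\ref{GammaIdentities}) together with $A_1^2=-1$, $B_1^2=1$ (Lemma~\ref{GammaIdentities2}); the lift $\rho(s)$ is the order-three automorphism cycling the three involutions of $K$, equivalently the three $\ad$-axes $h$, $e-f$, $e+f$ of $\mf{sl}_2$. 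I expect the permutation of the $p_j$ to conspire with the cube-root factors hidden in $A_1,B_1$ so that the off-diagonal entries recombine and $s\cdot h'=h'$; verifying this is the main obstacle, and it is a direct if slightly lengthy substitution.

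Granting $s\cdot h'=h'$, the remaining structure is essentially forced. Because $s$ is a Lie-algebra automorphism fixing $h'$ and $\mf{K}\cong\mf{sl}_2\otimes_{\CC}\ot^{K}$ has $\ad(h')$-eigenspaces $\CC e'\otimes_{\CC}\ot^{K}$ and $\CC f'\otimes_{\CC}\ot^{K}$, we must have $s\cdot e'=e'\otimes u$ and $s\cdot f'=f'\otimes w$; applying $s$ to $[e',f']=h'$ gives $uw=1$, so $u,w$ are units in $\CC[\wp_{\frac12\LL_{\omega_6}},\wp_{\frac12\LL_{\omega_6}}']$, hence nonzero constants with $w=u^{-1}$ and $u^3=1$, and faithfulness excludes $u=1$. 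Now $T/K=\CC/\tfrac12\LL_{\omega_6}$ is again hexagonal, so $\ot^{A_4}=\CC[\wp_{\frac12\LL_{\omega_6}}']$, while $\wp_{\frac12\LL_{\omega_6}}$ and $\wp_{\frac12\LL_{\omega_6}}^2$ span the $\chi_2$- and $\chi_1$-isotypical components over $\CC[\wp_{\frac12\LL_{\omega_6}}']$ by Lemma~\ref{IsotypicalComponents}. Matching eigenvalues selects $u=\omega_3$, so that by Lemma~\ref{Nnormal-form} (in the explicit form of Proposition~\ref{Mauto} with $\ell=3$) the $C_3$-invariants are generated by $e'\otimes\wp_{\frac12\LL_{\omega_6}}$, $f'\otimes\wp_{\frac12\LL_{\omega_6}}^2$ and $h'$ over $\CC[\wp_{\frac12\LL_{\omega_6}}']$, which is exactly the claimed normal form.

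It remains to identify this algebra with $\mf{O}$. Writing $\wp:=\wp_{\frac12\LL_{\omega_6}}$, the only nontrivial bracket is $[e'\otimes\wp,f'\otimes\wp^2]=[e',f']\otimes\wp^3=h'\otimes\wp^3$. Since $\tfrac12\LL_{\omega_6}$ is hexagonal we have $g_2=0$, whence $\wp^3=\tfrac14((\wp')^2+g_3)$ with $g_3\neq0$. With $J:=\wp'$ this is precisely the bracket treated in the $C_3$ case of Theorem~\ref{ClaLias}, and the same chain of rescalings and translations of $J$ brings it to the form $[E,F]=H\otimes J(J-1)$, identifying the algebra with the Onsager algebra $\mf{O}$ via \cite[Theorem~2.5]{knibbeler2023automorphic}.
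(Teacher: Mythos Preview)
Your proposal is correct and follows essentially the same route as the paper: reduce via $A_4\cong C_3\ltimes(C_2\times C_2)$ to the $C_2\times C_2$-normal form of Theorem~\ref{C2xC2aLia}, verify that $h'$ is $C_3$-invariant, then invoke Lemma~\ref{Nnormal-form}/Proposition~\ref{Mauto} and finish with the same Onsager identification as in Theorem~\ref{ClaLias}. The paper differs from your sketch only in that it actually carries out the verification $s\cdot h'=h'$: it fixes the signs $A_1=-i$, $B_1=1$, writes down an explicit $\rho(s)=\Ad\!\left(\tfrac12\begin{smallmatrix}1+i&-1+i\\1+i&1-i\end{smallmatrix}\right)$, and checks $\rho(s)\,[\Psi(z)h]=[\Psi(s\cdot z)h]$ by direct substitution using $p_j(s\cdot z)=p_{j-1}(z)$. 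Your description of this step (``I expect the permutation of the $p_j$ to conspire with the cube-root factors'') is accurate, but it is the one place where a concrete choice of $\rho(s)$ and of the branches of $A_1,B_1$ must be made and the identity checked rather than anticipated.

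One small point to tighten: your clause ``faithfulness excludes $u=1$'' and ``matching eigenvalues selects $u=\omega_3$'' are a bit quick. The exclusion of $u=1$ is not an immediate consequence of $\rho$ being faithful (the diagonal action mixes $\rho$ and $\tilde\sigma$); and once $u\in\{\omega_3,\omega_3^2\}$, which one occurs depends on the chosen $\rho(s)$ and sign conventions for $A_1,B_1$. The paper sidesteps both issues exactly as in the proof of Proposition~\ref{Mauto}: having fixed the explicit $\rho(s)$ and $A_1,B_1$, the verified intertwining relation forces $\tilde\rho(s)\ne 1$ (since $s\notin K=\ker\tilde\rho$), and the convention ``after applying an automorphism that interchanges the isotypical components if necessary'' pins down $u=\omega_\ell$ and hence the pairing $(e'\!\otimes\wp,\,f'\!\otimes\wp^2)$ rather than its swap. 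With those two adjustments your argument is complete and matches the paper's.
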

\begin{proof}
Assume $A_4$ is generated by $r_1,r_2$ and $s$ as in Lemma \ref{lem:groups up to conjugation} and notice that $A_4\cong C_3\ltimes (C_2\times C_2)$. 
Define $\rho:A_4\rightarrow \A(\mf{sl}_2)$ by $$\rho(s)=\Ad\left(\frac{1}{2}\begin{pmatrix}
 1+i &-1+i\\
 1+i&1-i
 \end{pmatrix}\right),$$
and $\rho(r_1)$, $\rho(r_2)$ as we have done for $C_2\times C_2$, see \eqref{def:C2xC2action}.  Define $\sigma:A_4\rightarrow \A(\mf{sl}_2)$ by $\sigma(s)z=\omega_3z$, $\sigma(r_1)z=z+1/2$ and $\sigma(r_2)z=z+\omega_3/2$. 
Theorem \ref{C2xC2aLia} gives us the normal form $\CC\langle h',e',f' \rangle \otimes_{\CC}\CC[\wp_{\frac{1}{2}\LL},\wp_{\frac{1}{2}\LL}']$ for the $C_2\times C_2$-aLia. 
We will verify that $h'$ is $C_3$-invariant, where $C_3\subset \A(\mf{sl}_2)$ is generated by $\rho(s)$, after which we can invoke Lemma \ref{Nnormal-form} and Proposition \ref{Mauto}. By Lemma \ref{IsotypicalComponents}, we know that $\CC[\wp_{\frac{1}{2}\LL_{\omega_6}},\wp_{\frac{1}{2}\LL_{\omega_6}}']^{\langle s\rangle}\cong \CC[\wp_{\frac{1}{2}\LL_{\omega_6}}'].$ Again, there is no loss of generality by our choice of $\rho$ and $\sigma$ by Lemma \ref{lem:Klein} combined with Lemma \ref{lem:IsomALias}.\\

Recall that by Lemma \ref{GammaIdentities2} $A_1^2=-1$ and $B_1^2=1$ in case of a hexagonal lattice. We choose $A_1=-i$ and $B_1=1$. The intertwiner $\Psi$ with respect to the basis $\{h,e,f\}$ becomes $$\begin{pmatrix}
\frac{1}{\sqrt{\alpha_2\beta_2}} p_0 p_1 & - p_2 & -\frac{1}{4\alpha_2\beta_2}(p_0^2+ p_1^2) p_2\\
\frac{1}{\sqrt{\alpha_2\beta_2}}(i p_0+ p_1) p_2 & -i\omega_3^2 p_1-\omega_3p_0 & \frac{1}{4\alpha_2\beta_2}(i p_0+ p_1)( p_0 p_1-\sqrt{\alpha_2\beta_2})\\
\frac{1}{\sqrt{\alpha_2\beta_2}}(-i p_0+ p_1) p_2 & i\omega_3^2 p_1-\omega_3 p_0 &\frac{1}{4\alpha_2\beta_2} (-i p_0+p_1)( p_0 p_1+\sqrt{\alpha_2\beta_2})
\end{pmatrix}.$$
We will show, given our choice of $A_1$ and $B_1$, that $\rho(s)\Psi (z)h=\Psi (s\cdot z)h$, which will imply that $\Psi h$ is $C_3$-invariant.
Write $[\cdot]$ for a vector or matrix with respect to the basis $\mathcal{B}=\{h,e,f\}$ and let $$U:=\left[\rho(s)\right]=\frac{1}{2}\begin{pmatrix}
0&-i&i\\
2&i&i\\
2&-i&-i
\end{pmatrix}.$$
We compute \begin{align*}U[\Psi (z)h]&=\frac{1}{2\sqrt{\alpha_2\beta_2}}\begin{pmatrix}
0&-i&i\\
2&i&i\\
2&-i&-i
\end{pmatrix} \begin{pmatrix}
p_0(z) p_1(z) \\
i p_0(z)p_2(z)+ p_1(z)p_2(z)\\
-i p_0(z)p_2(z)+ p_1(z)p_2(z)\end{pmatrix}\\
&=\frac{1}{\sqrt{\alpha_2\beta_2}}\begin{pmatrix}p_0(z)p_2(z)\\
p_0(z)p_1(z)+ip_1(z)p_2(z)\\
p_0(z)p_1(z)-ip_1(z)p_2(z) \end{pmatrix},
\end{align*}
and, using $p_j(s\cdot z)=p_{j-1}(z)$, 
\begin{align*}
[\Psi (s\cdot z)h]&=\frac{1}{\sqrt{\alpha_2\beta_2}}\begin{pmatrix}p_0(s\cdot z)p_1(s\cdot z)\\
i p_0(s\cdot z)p_2(s\cdot z)+ p_1(s\cdot z)p_2(s\cdot z)\\
-i p_0(s\cdot z)p_2(s\cdot z)+ p_1(s\cdot z)p_2(s\cdot z)\end{pmatrix}\\
&=\frac{1}{\sqrt{\alpha_2\beta_2}}\begin{pmatrix}p_0(z)p_2(z)\\
p_0(z)p_1(z)+ip_1(z)p_2(z)\\
p_0(z)p_1(z)-ip_1(z)p_2(z) \end{pmatrix}.
\end{align*}
This means that $h':=\Psi h$ is indeed invariant under the action of $C_3$. By Lemma \ref{Nnormal-form} and Proposition \ref{Mauto}, we see that $$(\mathfrak{sl}_2\otimes_{\CC} \mathcal{O}_{\mathbb{T}})^{A_4} =\CC\langle E,F,H\rangle\otimes_{\CC}\CC[\wp_{\frac{1}{2}\LL_{\omega_6}}'],$$
with generators $$E=e'\otimes \wp_{\frac{1}{2}\LL_{\omega_6}}, \quad F=f'\otimes \wp_{\frac{1}{2}\LL_{\omega_6}}^2,\quad H=h'.$$ By Theorem \ref{ClaLias}, this aLia is isomorphic to $\mathfrak{O}.$

\end{proof}
 \subsection{Summary}
In this section, we will summarise our results. The main results are firstly a classification of aLias on complex tori with base Lie algebra $\mathfrak{sl}_2$ and secondly, the normal forms (as defined in Definition \ref{def:NormalForm}) corresponding to each of the groups in Lemma \ref{lem:groups up to isomorphism}. In Table \ref{TableIntro} in the Introduction, we have given a classification in terms of the number of branch points of the canonical projection $\mathbb{T}\rightarrow \mathbb{T}/\Gamma$, where $\mathbb{T}$ is a punctured complex torus. It is a remarkable fact that the Onsager algebra appears only in the case whereby the complex torus has additional symmetry. We learn from Table \ref{TableIntro} that if $\Gamma\subset \A(T)$ is such that $\mathbb{T}\rightarrow \mathbb{T}/\Gamma$ has two branch points, then $(\mf{sl}_2\otimes_{\CC}\ot)^{\Gamma}$ is isomorphic to $\mf{O}$. Below we will give a more elaborate classification based on the possible symmetry groups, as we have found in Section \ref{MainResults}.\\

\textbf{ALias with $\Gamma=C_{\ell}\subset \A_0(T),\,\, \ell=2,3,4,6$}\\
The classification of aLias with $C_{\ell}$ ($\ell=3,4,6$) such that $C_{\ell}\subset \A_0(T)$, where $T$ is a suitable torus, has been done in Theorem \ref{ClaLias}. The normal forms are given in the proof of Theorem \ref{ClaLias} and \eqref{Normal Form C4}, \eqref{Normal Form C6}. In particular, for any faithful $\rho:C_{\ell}\rightarrow \A(\mathfrak{sl}_2)$ and $\sigma:C_{\ell}\rightarrow \A(T)$, 
the aLia $(\mathfrak{sl}_2\otimes_{\CC}\mathcal{O}_{\mathbb{T}})^{\rho\otimes\tilde{\sigma}(C_{\ell})}$ is isomorphic to the Onsager algebra.\\
The case of $\ell=2$ has been done in Example \ref{C2aLia} and is isomorphic to $\mf{S}_{\tau}$ for some $\tau\in \mathbb{H}$.\\

 \textbf{ALias with $\Gamma=C_{N}\subset t(T),\,\, N\in \ZZ_{\geq 1}$}\\
Fix a complex torus $T=\CC/\ZZ\oplus\ZZ\tau$ and let $\sigma_{\alpha}:C_N\rightarrow \A(T)$ be given by $\sigma_{\alpha}(r)z=z+\alpha$, where $\alpha$ is a $N$-torsion point of $T$. In Theorem \ref{CNaLia} we proved that for any $T$ it holds that $(\mathfrak{sl}_2\otimes_{\CC}\mathcal{O}_{\mathbb{T}})^{\rho\otimes\tilde{\sigma}_{\alpha}(C_N)}\cong \mathfrak{sl}_2\otimes_{\CC}\mathcal{O}_{\mathbb{T}}^{\tilde{\sigma}_{\alpha}(C_N)}$, regardless of the choice of $\rho:C_N\rightarrow\A(\mathfrak{sl}_2)$. Notice that $(\mathfrak{sl}_2\otimes_{\CC}\mathcal{O}_{\mathbb{T}})^{\rho\otimes\tilde{\sigma}_{\alpha}(C_N)}\cong \mathfrak{C}_{\tau'}$ for some $\tau'$ such that $T_{(\alpha)}\cong T_{\tau'}$, where we recall that $T_{(\alpha)}=\CC/(\ZZ+\ZZ\alpha+\ZZ\tau)$. \\

\textbf{ALias with $\Gamma=C_2\times C_2\subset t(T)$}\\
Theorem \ref{C2xC2aLia} says that for any faithful $\rho:C_{2}\times C_2\rightarrow \A(\mathfrak{sl}_2)$ and $\sigma:C_2\times C_2\rightarrow \A(T)$, where $T=\CC/\LL$, $(\mathfrak{sl}_2\otimes_{\CC}\mathcal{O}_{\mathbb{T}})^{\tilde{\sigma}(C_2\times C_2)}\cong \mathfrak{sl}_2\otimes_{\CC}\CC[\wp_{\frac{1}{2}\LL},\wp_{\frac{1}{2}\LL}']\cong \mathfrak{C}_{\frac{1}{2}\LL}$. Since the embedding of $C_2\times C_2$ inside $\A(T)$ is unique, we have that for each $\LL$, there is precisely one aLia with base Lie algebra $\mathfrak{sl}_2$ and symmetry group $C_2\times C_2$ embedded as translations. \\
 
\textbf{ALias with $\Gamma =A_4$}\\
By Theorem \ref{A4aLia}, $(\mathfrak{sl}_2\otimes_{\CC}\mathcal{O}_{\mathbb{T}})^{\rho\otimes\tilde{\sigma}(A_4)}\cong \mathfrak{O}$. In particular, this is independent of (faithful) $\sigma$ and $\rho$. \\

\textbf{ALias with $\Gamma=D_N, \,\, N\in \ZZ_{\geq 2}$}\\
In Corollary \ref{DNaLia}, we obtained $(\mathfrak{sl}_2\otimes_{\CC}\mathcal{O}_{\mathbb{T}})^{\rho\otimes\tilde{\sigma}_{\alpha}(D_N)}\cong \mathfrak{S}_{\tau}$ for some $\tau$ such that $T_{(\alpha)}\cong T_{\tau}$.
For $N=2$ we choose the first factor of $C_2\ltimes C_2$ to be a subgroup of $\A_0(T)$, as opposed to the third item in the above list ($C_2\times C_2\subset t(T)$). In this case $(\mathfrak{sl}_2\otimes_{\CC}\mathcal{O}_{\mathbb{T}})^{\rho\otimes\tilde{\sigma}_{\alpha}(D_2)}\cong \mathfrak{S}_{\tau}$ for some $\tau$.

\section{Discussion}
Our classification has been formulated in Theorem \ref{thm:classification}. This section provides a summary in terms of two tables, leading to Table \ref{TableIntro}, as given in the introduction. It will also highlight some directions for further research. \\

To explain how one arrives at Table \ref{TableIntro}, we will recall a basic fact. We have shown in Lemma \ref{lem:branch points} that the number of branch points of $\mathbb{T}\rightarrow \mathbb{T}/\Gamma$ equals 0 if $\Gamma\subset t(T)$. Furthermore, this number equals 2 if $\Gamma\in \{C_3,C_4,C_6,A_4\}$, where the cyclic groups are subgroups of $\A_0(T)$ for a suitable $T$.
For $\Gamma=D_N\cong C_2\ltimes C_N$ such that $C_2\subset \A_0(T)$, it holds that the number of branch points equals 3. We summarise our main result in Table \ref{Table3}.

\begin{table}[h]
        \begin{center}
            \begin{tabular}{c|ccc}
             &  0 & 2& 3        \\
            \hline\\[-4mm]
            $C_2$&  $\mathfrak{C}_{\tau}$ &   & $\mathfrak{S}_{\tau}$ \\
            $C_N,\,N=3,4,6$&  $\mathfrak{C}_{\tau}$ &  $\mathfrak{O}$ &  \\
            $C_N,\,N\neq 2, 3,4,6$&  $\mathfrak{C}_{\tau}$ &  &  \\
            $C_2\times C_2$& $\mathfrak{C}_{\tau}$ & & $\mathfrak{S}_{\tau}$ \\
            $D_N, \, N\ge 3$ & & &  $\mathfrak{S}_{\tau}$   \\
            $A_4$ & & $\mathfrak{O}$ &  \\ 
            \end{tabular}
        \caption{Isomorphism classes of aLias for each symmetry group $\Gamma$
         and each number of branch points of the quotient map $\mathbb{T}\rightarrow \mathbb{T}/\Gamma$.} 
        \label{Table3}
        \end{center}
    \end{table}
    The columns of Table \ref{Table3} are constant, so that we can summarise it further into Table \ref{TableIntro1}.
    \begin{table}[h!]
        \begin{center}
            \begin{tabular}{cc}
            $\#$ branch points &  Lie algebra       \\
            \hline\\[-4mm]
            0 & $\mathfrak{C}_{\tau}$  \\
            2 & $\mathfrak{O}_{\phantom{\tau}}$  \\
            3 & $\mathfrak{S}_{\tau}$
            \end{tabular}
        \caption{Lie algebra associated to the number of branch points of the quotient map $\mathbb{T}\rightarrow \mathbb{T}/\Gamma$.} 
        \label{TableIntro1}
        \end{center}
    \end{table}

We will present some possible directions for completing and extending the results in this paper. We know that $\mathfrak{C}_{\tau}\cong \mathfrak{C}_{\tau'}$ if and only if $[\tau]=[\tau']$. Settling how $\mf{S}_{\tau}$ breaks down into $\CC$-isomorphism classes would result in a more complete classification of our result. Notice that we can interpret this question also as to what extent the algebra determines the geometry in our problem.\\

An obvious extension would be to replace the base Lie algebra $\mf{sl}_2$ by $\g$, where $\g$ is a complex semisimple Lie algebra or even reductive $\g$. The challenge here is to construct intertwining maps; preliminary work shows that this is possible for some classes of higher dimensional base Lie algebras, but rather involved for others.\\

 This brings us to another question. The construction of intertwiners in Section \ref{MainResults} may appear to the reader to be somewhat ad-hoc. Can the main classification theorem, Theorem \ref{thm:classification}, be also proved without constructing bases, but only using the geometry of the problem, as suggested by Table \ref{TableIntro1}?\\
 
As we have pointed out in the introduction, aLias arose in the context of integrable systems. A recent paper by Mikhailov and Bury \cite{MR4187211} sparks renewed interest in the construction of Lax pairs using bases of aLias of the Riemann sphere. An interesting question is to consider what kind of integrable equations can be obtained using aLias on complex tori. In our Remark \ref{rem:LL}, we have highlighted a development arising in a different context leading to the Landau-Lifshitz hierarchy \cite{carey1993landau}. We hope that our classification allows for a systematic study of possible integrable systems associated to aLias on complex tori. This question is beyond the scope of this paper and it is research in progress.\\

Finally, we will make some remarks concerning the Onsager algebra. In this paper we have realised the Onsager algebra as an aLia in flat geometry. Previously, this algebra was realised in the spherical case \cite{roan1991onsager} and more recently also in hyperbolic geometry, cf. \cite{knibbeler2023automorphic}. It would be interesting to see how the Onsager algebra can be applied in a wider variety of geometries and what role the symmetry group plays in this context.\\

\textbf{Acknowledgements} We are grateful to Marta Mazzocco, Artie Prendergast, Jan Sanders and Alexander Veselov for very helpful and stimulating discussions. VK gratefully acknowledges support by the London Mathematical Society via an Emmy Noether fellowship. \\

\textbf{Funding} This work was supported by the Engineering and Physical Sciences Research Council (EPSRC):  the work of CO is supported by the grant [EP/W522569/1]; the work of SL and VK has been in part supported by the grant [EP/V048546/1].

\bibliographystyle{plain}

\end{document}